\newtheorem{thm}{Theorem}[section]
\newtheorem{rmk}{Remark}
\title{Superconvergence of $C^0-Q^k$ finite element method for elliptic equations with approximated coefficients
}
\author{Hao Li\thanks{Department of Mathematics,
Purdue University,
150 N. University Street,
West Lafayette, IN 47907-2067
  (\email{li2497@purdue.edu}, \email{zhan1966@purdue.edu})}
\and Xiangxiong Zhang\footnotemark[1]}
\begin{document}

\maketitle

\begin{abstract}
We prove that the superconvergence of $C^0$-$Q^k$ finite element method
at the Gauss Lobatto quadrature points still holds if variable coefficients in an elliptic problem are replaced by their piecewise $Q^k$ Lagrange interpolants at the 
 Gauss Lobatto points in each rectangular cell. In particular, a fourth order finite difference type scheme can be constructed using  $C^0$-$Q^2$ finite element method with $Q^2$ approximated coefficients. 

\end{abstract}

\begin{keywords}
Superconvergence, fourth order finite difference, elliptic equations, Gauss Lobatto points, approximated coefficients
\end{keywords}

\begin{AMS}
 	65N30,   	65N15,	65N06
\end{AMS}

\section{Introduction}

\subsection{Motivations}
Consider solving a variable coefficient Poisson equation 
\begin{equation} \label{poisson}
-\nabla\cdot(a\nabla u)=f,\quad a(x,y)>0
\end{equation}
with homogeneous Dirichlet boundary conditions on  a rectangular domain $\Omega$. 
Assume that the coefficient $a(x,y)$ and the solution $u(x,y)$ are sufficiently smooth. 
 Let $\|u\|_{k,p,\Omega}$ be the norm of Sobolev space $W^{k,p}(\Omega)$.
  For $p=2$, let $H^k(\Omega)=W^{k,2}(\Omega)$ and $\|\cdot\|_{k,\Omega}=\|\cdot\|_{k,2,\Omega}$.
The subindex $\Omega$ will be omitted when there is no confusion, e.g., $\|u\|_{0}$ denotes the $L^2(\Omega)$-norm and $\|u\|_1$ denotes the $H^1(\Omega)$-norm.
The  variational form is to find $u\in H_0^1(\Omega)=\{v\in H^1(\Omega): v|_{\partial \Omega}=0\}$ satisfying
\begin{equation} \label{varpro1}
 A(u,v)=(f,v),\quad \forall v\in H_0^1(\Omega),
\end{equation}
 where  
$A(u,v)=\iint_{\Omega} a\nabla u \cdot \nabla v dx dy$, $ (f,v)=\iint_{\Omega}fv dxdy.$ 
Consider a rectangular mesh with mesh size $h$. 
Let $V_0^h\subseteq H^1_0(\Omega)$ be the continuous finite element space consisting of piecewise $Q^k$ polynomials (i.e., tensor product of piecewise polynomials of degree $k$), then the $C^0$-$Q^k$ finite element solution of \eqref{varpro1} is defined as $u_h\in V_0^h$ satisfying 
\begin{equation}\label{numvarpro1}
 A(u_h,v_h)=(f,v_h),\quad \forall v_h\in V_0^h.
 \end{equation}
 
For implementing finite element method \eqref{numvarpro1}, either some quadrature is used or the coefficient $a(x,y)$ is approximated by polynomials for computing $\iint_{\Omega}a u_h v_h\,dx dy$. 
In this paper, we consider the implementation to approximate the smooth coefficient $a(x,y)$  by its $Q^k$ Lagrangian interpolation polynomial in each cell.
For instance, consider $Q^2$ element in two dimensions, tensor product of 3-point Lobatto quadrature form nine uniform points on each cell, see Figure \ref{mesh}.  By point values of $a(x,y)$ at these nine points, we can obtain a $Q^2$ Lagrange interpolation polynomial on each cell. Let $a_I(x,y)$ and $f_I(x,y)$ denote the piecewise $Q^k$ interpolation of $a(x,y)$ and $f(x,y)$ respectively. For a smooth functions $a\geq C> 0$, the interpolation error on each cell $e$ is $\max_{\mathbf x\in e} |a_I(\mathbf x)-a(\mathbf x)|=\mathcal O(h^{k+1})$ thus $a_I>0$ if $h$ is small enough. So if assuming the mesh is fine enough so that $a_I(x,y)\geq C>0,$  
we consider  the following scheme using the approximated coefficients $a_I(x,y)$:
find  $\tilde u_h\in V_0^h$ satisfying
  \begin{equation}\label{numvarpro2}
 A_I(\tilde u_h,v_h): =\iint_{\Omega} a_I \nabla \tilde u \cdot \nabla v dx dy=\langle f,v_h\rangle_h,\quad \forall v_h\in V_0^h,
 \end{equation}
 where $\langle f,v_h\rangle_h$ denotes using tensor product of $(k+1)$-point Gauss Lobatto quadrature for the integral $(f,v_h)$.
 One can also simplify the computation of the right hand side by using $f_I(x,y)$, so we also consider the scheme to find $\tilde{u}_h$ satisfying
 \begin{equation}\label{numvarpro3}
 A_I(\tilde{u}_h,v_h)=(f_I,v_h),\quad \forall v_h\in V_0^h.
 \end{equation}

 \begin{figure}[h]
 \subfigure[A $n_x\times n_y$ finite difference grid]{\includegraphics[scale=0.8]{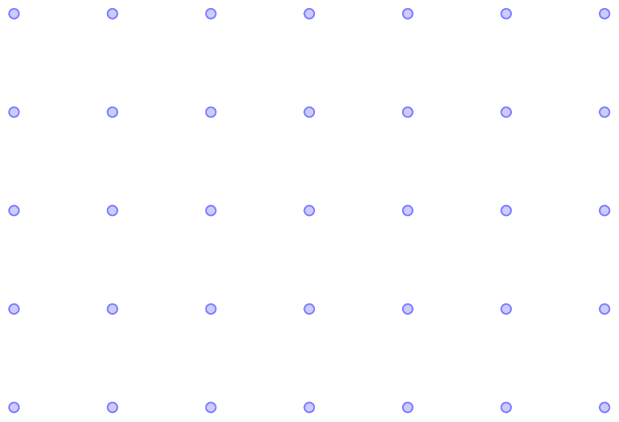} }
 \hspace{.6in}
 \subfigure[The corresponding $(n_x-1)/2\times(n_y-1)/2$ mesh $\Omega_h$ for $Q^2$ element]{\includegraphics[scale=0.8]{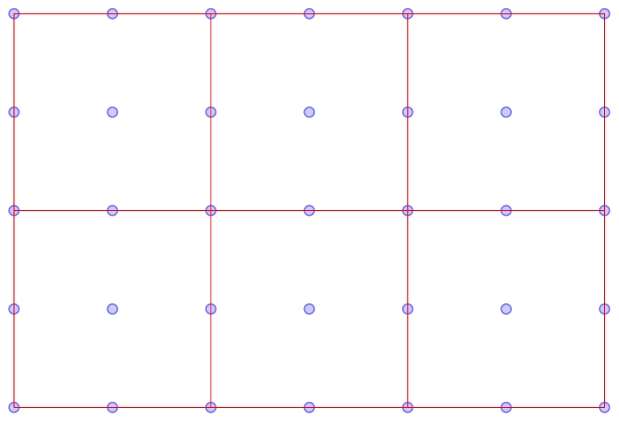}}
\caption{An illustration of meshes. }
\label{mesh}
 \end{figure}
 The schemes \eqref{numvarpro2} and \eqref{numvarpro3} correspond to the equation \begin{equation}
-\nabla\cdot(a_I (x,y)\nabla \tilde u(x,y))=f(x,y).
 \label{Poisson-approximated}
\end{equation}
At first glance, one might expect $(k+1)$-th order accuracy for a numerical method applying to \eqref{Poisson-approximated} due to the interpolation error $a(x,y)-a_I(x,y)=\mathcal O(h^{k+1})$.
But as we will show in Section \ref{sec-main-pdesection}, the difference between exact solutions $u$ and $\tilde u$ to the two elliptic equations \eqref{poisson} and \eqref{Poisson-approximated} is $\mathcal O(h^{k+2})$ in $L^2(\Omega)$-norm under suitable assumptions. 
The main focus of this paper is to  show \eqref{numvarpro2} and \eqref{numvarpro3} are $(k+2)$-th order accurate finite difference type schemes 
via the superconvergence of finite element method.
Such a result is  very interesting from the perspective that 
a fourth order accurate scheme can be constructed even if the coefficients in the equation are approximated by quadratic polynomials, which does not seem to be considered before in the literature.

Since only grid point values of $a(x,y)$ and $f(x,y)$ are needed in  scheme \eqref{numvarpro2} or \eqref{numvarpro3},  they can be regarded as finite difference type schemes. 
Consider a uniform $n_x\times n_y$ grid for a rectangle $\Omega$ with grid points $(x_i,y_j)$ and grid spacing $h$, 
where $n_x$
and $n_y$ are both odd numbers as shown in Figure \ref{mesh}(a). Then   there is a mesh $\Omega_h$ of $(n_x-1)/2\times(n_y-1)/2$ $Q^2$ elements so that Gauss-Lobatto points for all cells in $\Omega_h$ are exactly the finite difference grid points. 
By using the scheme \eqref{numvarpro2} or \eqref{numvarpro3} on the finite element mesh $\Omega_h$ shown in Figure \ref{mesh}(b),
we obtain a fourth order finite difference scheme in the sense that $\tilde u_h$
is fourth order accurate in the discrete 2-norm at all grid points.

In practice the most convenient implementation is to use  tensor product of $(k+1)$-point Gauss Lobatto quadrature for integrals in \eqref{varpro1}, since the standard $L^2(\Omega)$ and $H^1(\Omega)$ error estimates still hold \cite{ciarlet1972combined, ciarlet1991basic} and the Lagrangian $Q^k$ basis are delta functions at these quadrature points. Such a quadrature scheme can be denoted as finding $u_h\in V_0^h$ satisfying
\begin{equation}\label{numvarpro4}
 A_h(u_h,v_h)=\langle f,v_h\rangle_h,\quad \forall v_h\in V_0^h,\end{equation}
where $A_h(u_h,v_h)$  and $\langle f,v_h\rangle_h$ denote using tensor product of $(k+1)$-point Gauss Lobatto quadrature for integrals $A(u_h,v_h)$ and $(f,v_h)$ respectively.
Numerical tests suggest that the approximated coefficient scheme \eqref{numvarpro3} is more accurate and robust than the quadrature scheme \eqref{numvarpro4} in some cases.

\subsection{Superconvergence of $C^0$-$Q^k$ finite element method}
 Standard error estimates of \eqref{numvarpro1} are
 $\|u-u_h\|_{1}\leq C h^{k}\|u\|_{k+1}$ and $\|u-u_h\|_{0}\leq C h^{k+1}\|u\|_{k+1}$ \cite{ciarlet1991basic}.
  At certain quadrature or symmetry points the finite element solution or its derivatives have higher order accuracy, which is called superconvergence.
Douglas and Dupont first proved that continuous finite element method using piecewise polynomial of degree $k$ has $O(h^{2k})$ convergence at the knots  in an one dimensional mesh \cite{douglas1973some, douglas1974galerkin}. In \cite{douglas1974galerkin}, $O(h^{2k})$ was proven to be the best possible convergence rate. 
For $k\geq 2$, $\mathcal O (h^{k+1})$ for the derivatives at Gauss quadrature points and $\mathcal O (h^{k+2})$ for functions values at Gauss-Lobatto quadrature points were proven in \cite{lesaint1979superconvergence, chen1979superconvergent, bakker1982note}.

For two dimensional cases, it was first showed in \cite{douglas1974infty}  that the $(k+2)$-th order superconvergence for $k\geq 2$ at vertices of all rectangular cells in a two dimensional rectangular mesh. 
Namely, the convergence rate at the knots is as least one order higher than the rate globally. 
Later on,  the $2k$-th order (for $k\geq 2$) convergence rate at the knots was proven for $Q^k$ elements solving $-\Delta u=f$, see \cite{chen2013highest, he20172𝑘}.

For the multi-dimensional variable coefficient case, when discussing the superconvergence of derivatives, it can be reduced to the Laplacian case. Superconvergence of tensor product elements for the Laplacian case can be established by extending one-dimensional results \cite{douglas1974infty, wahlbin2006superconvergence}.
See also \cite{He2017} for the superconvergence of the gradient. 
The superconvergence of function values in rectangular elements for the variable coefficient case  were studied  in \cite{chen2001structure} by Chen with M-type projection polynomials and in \cite{lin1996} by  Lin and Yan with the point-line-plane interpolation polynomials. In particular, let $Z_0$ denote the set of tensor product of $(k+1)$-point Gauss-Lobatto quadrature points for all rectangular cells, then the following superconvergence of function values for  $Q^k$ elements  was shown in \cite{chen2001structure}:
 \begin{eqnarray}
\label{super-exactcoef} \left( h^2\sum_{(x,y)\in Z_0}|u(x,y)-u_h(x,y)|^2\right)^{1/2}&\leq& C h^{k+2}\|u\|_{k+2},\quad  k\geq 2,\\
\label{super-exactcoef-max} \max_{(x,y)\in Z_0}|u(x,y)-u_h(x,y)|&\leq &C h^{k+2}|\ln{h}|\|u\|_{k+2,\infty,\Omega},\quad k\geq 2.
\end{eqnarray} 

In general superconvergence of  \eqref{numvarpro1}   has been well studied in the literature. Many superconvergence results are established for interior points away from the boundary for various domains. Our major motivation to study superconvergence is to use it for constructing a finite difference scheme, thus we only consider a rectangular domain for which all Lobatto points can form a finite difference grid.

We are interested in superconvergence of function values for $Q^k$ element when the computation of integrals is simplified. 
 For one-dimensional problems, it was proven in \cite{douglas1974galerkin} that 
$O(h^{2k})$ at knots still holds if  $(k+1)$-point Gauss-Lobatto quadrature is used for $P^2$ element.  Superconvergence of the gradient for using quadrature  was studied in \cite{lesaint1979superconvergence}. 
For multidimensional problems, even though it is possible to show \eqref{super-exactcoef} holds for \eqref{numvarpro1} with accurate enough quadrature,  it is nontrivial to extend the superconvergence proof  to \eqref{numvarpro4} with only $(k+1)$-point Gauss Lobatto quadrature. Superconvergence analysis of the scheme \eqref{numvarpro4} is much more complicated thus will be discussed in another paper \cite{li2019fourth}.

\subsection{Contributions of the paper}

The objective and main motivation of this paper is to construct a fourth order accurate finite difference type scheme  based on the superconvergence of $C^0$-$Q^2$ finite element method using $Q^2$ polynomial coefficients in elliptic equations and demonstrate the accuracy. 
The main result can be easily generalized to higher order cases thus we keep the discussion general to $Q^k$ ($k\geq 2$)
and prove its $(k+2)$-th order superconvergence of function values when
using PDE coefficients are replaced by their $Q^k$ interpolants: \eqref{super-exactcoef} still holds for both schemes \eqref{numvarpro2} and \eqref{numvarpro3}.
 Moreover, \eqref{numvarpro2} and \eqref{numvarpro3} have all finite element method advantages such as the symmetry of the stiffness matrix, which is desired in applications. The scheme \eqref{numvarpro2} or \eqref{numvarpro3} is also an efficient implementation of $C^0$-$Q^k$ finite element method since only $Q^k$  coefficients are needed to retain the $(k+2)$-th order accuracy of function values at the Lobatto points.
 
The paper is organized as follows. In Section \ref{sec-preliminaries}, we introduce the notations and review standard interpolation and quadrature estimates. In Section \ref{sec-projection}, we review the tools to establish superconvergence of function values in $C^0$-$Q^k$ finite element method \eqref{numvarpro1} with a complete proof. 
In Section \ref{sec-main}, we prove the main result on the superconvergence of \eqref{numvarpro2} and \eqref{numvarpro3} in two dimensions with extensions to a general elliptic equation.
All discussion in this paper can be easily extended to the three dimensional case.
Numerical results are given in Section \ref{sec-test}. Section \ref{sec-remark} consists of concluding remarks.

\section{Notations and preliminaries}
\label{sec-preliminaries}
\subsection{Notations}
In addition to the notations mentioned in the introduction, the following notations will be used in the rest of the paper: 
\begin{itemize}
 \item $n$ denotes the dimension of the problem. Even though we discuss everything explicitly for $n=2$, all key discussions  can be
 easily extended to $n=3$. The main purpose of keeping $n$ is for readers to see independence/cancellation  of the dimension $n$ in the proof of some important estimates. 
 \item We only consider a rectangular domain $\Omega$ with its boundary $\partial \Omega$. 
 \item $\Omega_h$ denotes a rectangular mesh with mesh size $h$. Only for convenience, we assume $\Omega_h$ is an uniform mesh and $e=[x_e-h,x_e+h]\times [y_e-h,y_e+h]$ denotes any cell in $\Omega_h$ with 
 cell center $(x_e,y_e)$. {\bf The assumption of an uniform mesh is not essential to the proof}.
  \item $Q^k(e)=\left\{p(x,y)=\sum\limits_{i=0}^k\sum\limits_{j=0}^k p_{ij} x^iy^j, (x,y)\in e\right\}$ is the set of 
 tensor product of polynomials of degree $k$ on a cell $e$.
\item $V^h=\{p(x,y)\in C^0(\Omega_h): p|_e \in Q^k(e),\quad \forall e\in \Omega_h\}$ denotes the continuous piecewise $Q^k$ finite element space on $\Omega_h$.
\item $V^h_0=\{v_h\in V^h: v_h=0 \quad\mbox{on}\quad \partial \Omega \}.$
\item The norm and seminorms for $W^{k,p}(\Omega)$ and $1\leq p<+\infty$, with standard modification for $p=+\infty$:
$$
 \|u\|_{k,p,\Omega}=\left(\sum\limits_{i+j\leq  k}\iint_{\Omega}|\partial_x^i\partial_y^ju(x,y)|^pdxdy\right)^{1/p},
 $$
$$
 |u|_{k,p,\Omega}=\left(\sum\limits_{i+j= k}\iint_{\Omega}|\partial_x^i\partial_y^ju(x,y)|^pdxdy\right)^{1/p},
 $$
$$
 [u]_{k,p,\Omega}=\left(\iint_{\Omega}|\partial_x^k u(x,y)|^pdxdy+\iint_{\Omega}|\partial_y^k u(x,y)|^p dxdy\right)^{1/p}.
 $$
 Notice that $[u]_{k+1,p,\Omega}=0$ if $u$ is a $Q^k$ polynomial.
 \item $\|u\|_{k,\Omega}$, $|u|_{k,\Omega}$ and $[u]_{k,\Omega}$ denote 
 norm and seminorms for $H^k(\Omega)=W^{k,2}(\Omega)$.
 \item When there is no confusion, $\Omega$ may be dropped in the norm and seminorms. 
 \item For any $v_h\in V_h$, $1\leq p<+\infty $ and $k\geq 1$, $$\|v_h\|_{k,p,\Omega}:= \left[\sum_e\|v_h\|_{k,p,e}^p\right]^{\frac1p}, \quad 
 |v_h|_{k,p,\Omega}:= \left[\sum_e|v_h|_{k,p,e}^p\right]^{\frac1p}.$$
\item Let $Z_{0,e}$ denote the set of $(k+1)\times (k+1)$ Gauss-Lobatto points on a cell $e$.
\item $Z_0=\bigcup_e Z_{0,e}$ denotes all Gauss-Lobatto points in the mesh $\Omega_h$.
\item 
Let $\|u\|_{2,Z_0}$ and $\|u\|_{\infty,Z_0}$
denote the discrete 2-norm and the maximum norm over $Z_0$ respectively:
\[\|u\|_{2,Z_0}=\left[h^2\sum_{(x,y)\in Z_0} |u(x,y)|^2\right]^{\frac12},\quad \|u\|_{\infty,Z_0}=\max_{(x,y)\in Z_0} |u(x,y)|.\]
\item For a smooth function $a(x,y)$, let $a_I(x,y)$ denote its piecewise $Q^k$ Lagrange interpolant at $Z_{0,e}$ on each cell $e$, i.e., $a_I\in V^h$ satisfies:
\[a(x,y)=a_I(x,y), \quad \forall (x,y)\in Z_0. \]
\item $P^k(t)$ denotes the polynomial of degree $k$ of variable $t$. 
\item $(f,v)$ denotes the inner product in $L^2(\Omega)$:
\[(f,v)=\iint_{\Omega} fv\, dxdy.\]
\item $\langle f,v\rangle_h$ denotes the approximation to $(f,v)$ by using $(k+1)\times(k+1)$-point Gauss Lobatto
quadrature for integration over each cell $e$. 
\end{itemize}
  
  \bigskip
  
The following are commonly used
  tools and facts:
  \begin{itemize}
   \item $\hat K=[-1,1]\times [-1,1]$ denotes a reference cell.
 \item For $v(x,y)$ defined on $e$, consider $\hat v(s, t)=v(sh+ x_e,t h+ y_e)$  defined on $\hat K$. 
 \item For $n$-dimensional problems, the following scaling argument will be used:
\begin{equation}
h^{k-n/p}|v|_{k,p,e}=|\hat v|_{k,p,\hat K},\quad  h^{k-n/p}[v]_{k,p,e}=[\hat v]_{k,p,\hat K}, \quad 1\leq p\leq \infty.
\label{scaling}
\end{equation}
 \item Sobolev's embedding in two and three dimensions: $H^{2}(\hat K)\hookrightarrow C^0(\hat K)$. 
 \item The embedding implies  
  $$\|\hat  f\|_{0,\infty,\hat K}\leq C \|\hat  f\|_{k,2, \hat K}, \forall \hat f\in H^{k}(\hat K), k\geq 2,$$
 $$\|\hat  f\|_{1,\infty,\hat K}\leq C \|\hat  f\|_{k+1,2, \hat K}, \forall\hat f\in H^{k+1}(\hat K), k\geq 2.$$
   \item Cauchy Schwarz inequalities:
  \[\sum_e \|u\|_{k,e}\|v\|_{k,e}\leq \left(\sum_e \|u\|^2_{k,e}\right)^{\frac12}\left(\sum_e \|v\|^2_{k,e}\right)^{\frac12}, 
  \|u\|_{k,1,e}=\mathcal O(h^{\frac{n}{2}}) \|u\|_{k,2,e}.\]
  \item Poincar\'{e} inequality: let $\bar{\hat f}$ be the average of $\hat f\in H^1(\hat K)$ on $\hat K$, then 
  \[ |\hat f-\bar{\hat f}|_{0,p,\hat K}\leq C |\nabla \hat f|_{0,p,\hat K}, \quad p\geq 1.\]
 \item 
For $k\geq 2$, the $(k+1)\times(k+1)$ Gauss-Lobatto quadrature is exact for 
integration of polynomials of degree $2k-1\geq k+1$ on $\hat K$.
\item Any polynomial in $Q^k(\hat K)$ can be uniquely represented by its point values at $(k+1)\times(k+1)$ Gauss Lobatto points on $\hat K$, and it is straightforward to verify that the discrete $2$-norm $\|p\|_{2, Z_0}$ and $L^2(\Omega)$-norm $\|p\|_{0, \Omega}$ are equivalent for a piecewise $Q^k$ polynomial $p\in V^h$. 
\item  Define the projection operator $\hat{\Pi}_1: \hat u \in L^1(\hat K)\rightarrow \hat \Pi_1\hat u\in Q^1(\hat K)$ by 
 \begin{equation}
\iint_{\hat K} (\hat{\Pi}_1 \hat{u} ) w dxdy= \iint_{\hat K} \hat{u} w dxdy,\forall w\in Q^1(\hat K).
\label{projection1}
 \end{equation}
 Notice that $\hat{\Pi}_1$ is a continuous linear mapping from $L^2(\hat K)$ to $H^1(\hat K)$ (or $H^2(\hat K)$) since  all degree of freedoms of $\hat{\Pi}_1 \hat{u}$ can be represented as a linear combination of $\iint_{\hat K} \hat u(s,t) p(s,t)dsdt$ for $p(s,t)=1,s,t,st$
 and by Cauchy Schwarz inequality $|\iint_{\hat K} \hat u(s,t) p(s,t)dsdt|\leq \|\hat u\|_{0,2,\hat K}\|\hat p\|_{0,2,\hat K}\leq C \|\hat u\|_{0,2,\hat K}$.
  \end{itemize}

\subsection{The Bramble-Hilbert Lemma}
By the abstract Bramble-Hilbert Lemma
in \cite{brezzi1975numerical}, with the result  $\|v\|_{m,p,\Omega}\leq 
C(|v|_{0,p,\Omega}+[v]_{m,p,\Omega})$ for any $v\in W^{m, p}(\Omega)$ \cite{smith1961inequalities, agmon2010lectures}, the Bramble-Hilbert Lemma for $Q^k$ polynomials can be stated as (see Exercise 3.1.1 and Theorem 4.1.3 in \cite{ciarlet1978finite}):
\begin{thm}
\label{bh-lemma}
If a continuous linear mapping  $\Pi: H^{k+1}(\hat K)\rightarrow H^{k+1}(\hat K)$
satisfies $\Pi v=v$ for any $v\in Q^k(\hat K)$, then 
\begin{equation}
\|u-\Pi u\|_{k+1,\hat K}\leq C [u]_{k+1, \hat K}, \quad \forall u\in H^{k+1}(\hat K).
\label{bh1}
\end{equation}
Thus if $l(\cdot)$ is a continuous linear form on the space $H^{k+1}(\hat K)$ satisfying
$l(v)=0,\forall v\in Q^k(\hat K),$
then \[|l(u)|\leq C \|l\|'_{k+1, \hat K} [u]_{k+1,\hat K},\quad \forall u\in H^{k+1}(\hat K),\]
where $ \|l\|'_{k+1, \hat K}$ is the norm in the dual space of $H^{k+1}(\hat K)$.
\end{thm}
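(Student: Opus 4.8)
The plan is to reduce both assertions to a single Deny--Lions type approximation estimate on the quotient space $H^{k+1}(\hat K)/Q^k(\hat K)$, namely
\begin{equation}
\inf_{q\in Q^k(\hat K)} \|u-q\|_{k+1,\hat K} \leq C\,[u]_{k+1,\hat K}, \qquad \forall u\in H^{k+1}(\hat K).
\label{denylions}
\end{equation}
The structural fact that makes $[\cdot]_{k+1,\hat K}$ the correct seminorm here is that its kernel is exactly $Q^k(\hat K)$: since $\partial_x^{k+1}$ and $\partial_y^{k+1}$ annihilate every tensor-product polynomial of coordinate degree $k$ we have $[q]_{k+1,\hat K}=0$ for $q\in Q^k(\hat K)$, and conversely $\partial_x^{k+1}u=\partial_y^{k+1}u=0$ forces $u\in Q^k(\hat K)$. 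Once \eqref{denylions} is available, both conclusions follow quickly.

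For the first statement I would use the invariance $\Pi q=q$ for $q\in Q^k(\hat K)$ to write $(I-\Pi)u=(I-\Pi)(u-q)$ for every $q\in Q^k(\hat K)$, since $(I-\Pi)q=0$. Because $\Pi$ is continuous on $H^{k+1}(\hat K)$, the operator $I-\Pi$ is bounded on $H^{k+1}(\hat K)$ with operator norm at most $1+\|\Pi\|$, so
\begin{equation*}
\|u-\Pi u\|_{k+1,\hat K}=\|(I-\Pi)(u-q)\|_{k+1,\hat K}\leq (1+\|\Pi\|)\,\|u-q\|_{k+1,\hat K}.
\end{equation*}
Taking the infimum over $q\in Q^k(\hat K)$ and invoking \eqref{denylions} gives \eqref{bh1}. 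For the second statement, the hypothesis $l(q)=0$ for $q\in Q^k(\hat K)$ gives $l(u)=l(u-q)$, hence $|l(u)|\leq \|l\|'_{k+1,\hat K}\,\|u-q\|_{k+1,\hat K}$; again taking the infimum over $q$ and applying \eqref{denylions} yields the claimed bound with constant $C\|l\|'_{k+1,\hat K}$.

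The substance of the argument, and the step I expect to be the main obstacle, is establishing \eqref{denylions}. I would deduce it from the cited norm equivalence $\|v\|_{k+1,\hat K}\leq C(|v|_{0,\hat K}+[v]_{k+1,\hat K})$ by a compactness argument. Let $P:H^{k+1}(\hat K)\to Q^k(\hat K)$ be the $L^2(\hat K)$-orthogonal projection and set $W=\{v\in H^{k+1}(\hat K): Pv=0\}$; it suffices to show $|v|_{0,\hat K}\leq C[v]_{k+1,\hat K}$ for $v\in W$, since then the cited inequality gives $\|v\|_{k+1,\hat K}\leq C[v]_{k+1,\hat K}$ on $W$, and \eqref{denylions} follows by taking $q=Pu$ and using $[u-Pu]_{k+1,\hat K}=[u]_{k+1,\hat K}$. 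Suppose the bound on $W$ fails; then there is a sequence $v_n\in W$ with $|v_n|_{0,\hat K}=1$ and $[v_n]_{k+1,\hat K}\to 0$. The cited inequality bounds $\|v_n\|_{k+1,\hat K}$, so by the Rellich--Kondrachov compact embedding $H^{k+1}(\hat K)\hookrightarrow L^2(\hat K)$ a subsequence converges in $L^2(\hat K)$ and weakly in $H^{k+1}(\hat K)$ to some $v$. Weak lower semicontinuity of the seminorm $[\cdot]_{k+1,\hat K}$ forces $[v]_{k+1,\hat K}=0$, hence $v\in Q^k(\hat K)$; since $W$ is closed and therefore weakly closed, $v\in W\cap Q^k(\hat K)=\{0\}$, contradicting $|v|_{0,\hat K}=\lim|v_n|_{0,\hat K}=1$. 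The delicate points are the weak lower semicontinuity of the seminorm and the compact embedding on the fixed reference cell $\hat K$; both are standard but are where the real analytic content resides. This is precisely the content of the abstract Bramble--Hilbert Lemma of \cite{brezzi1975numerical}, so in the write-up one may alternatively invoke it directly with the kernel $Q^k(\hat K)$ and the seminorm $[\cdot]_{k+1,\hat K}$.
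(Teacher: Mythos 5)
Your proposal is correct and follows essentially the same route as the paper: the paper obtains the statement by invoking the abstract Bramble--Hilbert lemma of \cite{brezzi1975numerical} together with the norm equivalence $\|v\|_{k+1,\hat K}\leq C\left(|v|_{0,\hat K}+[v]_{k+1,\hat K}\right)$ from \cite{smith1961inequalities, agmon2010lectures}, using that the kernel of $[\cdot]_{k+1,\hat K}$ is exactly $Q^k(\hat K)$. Your compactness proof of the Deny--Lions estimate is precisely the standard proof of that abstract lemma (as you note yourself), so your write-up simply makes the paper's citation self-contained rather than taking a different path.
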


\subsection{Interpolation and quadrature errors}
For $Q^k$ element ($k\geq 2$), consider $(k+1)\times(k+1)$ Gauss-Lobatto quadrature, which is exact for integration of $Q^{2k-1}$ polynomials. 

It is straightforward to establish the 
interpolation error:
\begin{thm}
\label{interp-theorem}
For a smooth function $a$,  $|a-a_I|_{0,\infty,\Omega}=\mathcal O (h^{k+1})|a|_{k+1,\infty,\Omega}$.
\end{thm}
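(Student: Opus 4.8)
The plan is the classical scaling-plus-Bramble-Hilbert argument, carried out one cell at a time. Fix a cell $e\in\Omega_h$ and let $\hat I:C^0(\hat K)\to Q^k(\hat K)$ denote $Q^k$ Lagrange interpolation at the $(k+1)\times(k+1)$ Gauss-Lobatto points of the reference cell $\hat K$. The first observation is that interpolation commutes with the affine map $\hat v(s,t)=v(sh+x_e,th+y_e)$: since the Gauss-Lobatto nodes $Z_{0,e}$ are mapped exactly onto the Gauss-Lobatto nodes of $\hat K$, the restriction $a_I|_e$ transforms into $\hat I\hat a$, so that $\widehat{a-a_I}=\hat a-\hat I\hat a$ on $\hat K$.

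Next I would prove the reference-cell estimate $\|\hat a-\hat I\hat a\|_{0,\infty,\hat K}\le C\,[\hat a]_{k+1,\infty,\hat K}$. The operator $R:=\mathrm{id}-\hat I$ is a bounded linear map on $W^{k+1,\infty}(\hat K)$: point evaluation is continuous because $W^{k+1,\infty}(\hat K)\hookrightarrow C^0(\hat K)$, and $\|\hat I\hat a\|_{0,\infty,\hat K}\le\sum_i|\hat a(\hat x_i)|\,\|\hat\phi_i\|_{0,\infty,\hat K}\le C\|\hat a\|_{0,\infty,\hat K}$, where $\hat\phi_i$ are the Lagrange basis functions. Moreover $R$ annihilates $Q^k(\hat K)$ since $\hat I$ reproduces $Q^k$. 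For each fixed $\hat x\in\hat K$, the functional $\hat a\mapsto(R\hat a)(\hat x)$ is a continuous linear form on $W^{k+1,\infty}(\hat K)$ vanishing on $Q^k(\hat K)$, with dual norm bounded uniformly in $\hat x$ by the operator norm of $R$; the $p=\infty$ form of the Bramble-Hilbert Lemma (Theorem~\ref{bh-lemma}) then bounds each value $|(R\hat a)(\hat x)|$ by $C\,[\hat a]_{k+1,\infty,\hat K}$, and taking the supremum over $\hat x$ gives the claimed estimate.

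Finally I would scale back using \eqref{scaling} with $n=2$ and $p=\infty$: since $\|v\|_{0,\infty,e}=\|\hat v\|_{0,\infty,\hat K}$ and $[\hat a]_{k+1,\infty,\hat K}=h^{k+1}[a]_{k+1,\infty,e}$, the reference estimate yields $\|a-a_I\|_{0,\infty,e}\le C\,h^{k+1}[a]_{k+1,\infty,e}\le C\,h^{k+1}|a|_{k+1,\infty,e}$. Taking the maximum over all cells $e$ and using $\max_e|a|_{k+1,\infty,e}\le|a|_{k+1,\infty,\Omega}$ produces $|a-a_I|_{0,\infty,\Omega}\le C\,h^{k+1}|a|_{k+1,\infty,\Omega}$. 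The only point requiring a little care is the use of Bramble-Hilbert in the $L^\infty$ setting, since Theorem~\ref{bh-lemma} is stated for $H^{k+1}=W^{k+1,2}$; one either notes that the underlying inequality $\|v\|_{m,p}\le C(|v|_{0,p}+[v]_{m,p})$ and the quotient-norm/compactness proof both go through for $p=\infty$ (the relevant compact embedding $W^{k+1,\infty}(\hat K)\hookrightarrow W^{k,\infty}(\hat K)$ being Arzel\`a--Ascoli), or, as an alternative, first derives the bound in $\|\cdot\|_{0,2,\hat K}$ directly from Theorem~\ref{bh-lemma} and then upgrades it to $L^\infty$ on the fixed reference cell $\hat K$. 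Everything else is routine bookkeeping.
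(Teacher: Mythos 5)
Your proposal is correct, but there is nothing in the paper to compare it against: the paper states Theorem~\ref{interp-theorem} with no proof at all, declaring the interpolation estimate ``straightforward to establish.'' What you have written is precisely the standard argument the paper implicitly leans on: affine invariance of Gauss--Lobatto Lagrange interpolation (so that $\widehat{a-a_I}=\hat a-\hat I\hat a$), a Bramble--Hilbert bound for $\mathrm{id}-\hat I$ on the reference cell, and the scaling identity \eqref{scaling} with $p=\infty$, $n/p=0$, followed by a maximum over cells. All three steps are sound, and your bookkeeping with $[\,\cdot\,]_{k+1,\infty}$ versus $|\cdot|_{k+1,\infty}$ is correct.

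One caution on your closing remark. Your primary route requires the $W^{k+1,\infty}$ version of the Bramble--Hilbert lemma, which the paper states only for $p=2$; invoking the general-$p$ version (as in Ciarlet's book, which the paper cites) is legitimate, and your Arzel\`a--Ascoli remark covers the compactness needed. However, your stated fallback --- ``first derive the bound in $\|\cdot\|_{0,2,\hat K}$ and then upgrade it to $L^\infty$'' --- does not work as phrased: the remainder $\hat a-\hat I\hat a$ is not a polynomial, so no norm-equivalence or inverse estimate lets you pass from an $L^2$ bound to a sup-norm bound. The correct version of that fallback stays entirely inside the paper's toolkit: take $\Pi=\hat I$ in \eqref{bh1} (it is continuous on $H^{k+1}(\hat K)$ because $H^{k+1}(\hat K)\hookrightarrow C^0(\hat K)$ and reproduces $Q^k$), obtaining the bound in the \emph{full} norm $\|\hat a-\hat I\hat a\|_{k+1,\hat K}\leq C[\hat a]_{k+1,\hat K}$, then use the Sobolev embedding listed in the paper's preliminaries to get
\begin{equation*}
\|\hat a-\hat I\hat a\|_{0,\infty,\hat K}\leq C\|\hat a-\hat I\hat a\|_{k+1,\hat K}\leq C[\hat a]_{k+1,2,\hat K}\leq C[\hat a]_{k+1,\infty,\hat K},
\end{equation*}
after which your scaling step proceeds unchanged.
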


Let $s_{j}, t_j$ and $w_j$ $(j=1,\cdots, k+1)$ be the Gauss-Lobatto quadrature points and weight for the interval $[-1,1]$. Notice $\hat f$ coincides with its $Q^k$ interpolant $\hat f_I$ at the quadrature points and the quadrature is exact for integration of $\hat f_I$, the quadrature can be expressed on $\hat K$ as 
\[ \sum_{i=1}^{k+1}\sum_{j=1}^{k+1}\hat f(s_i, t_j)w_i w_j=\iint_{\hat K} \hat f_I(x,y)dxdy,\]
thus the quadrature error is related to interpolation error: 
\[ \iint_{\hat K} \hat f(x,y)dxdy-\sum_{i=1}^{k+1}\sum_{j=1}^{k+1}\hat f(s_i, t_j)w_i w_j
= \iint_{\hat K} \hat f(x,y)dxdy- \iint_{\hat K} \hat f_I(x,y)dxdy.\]

We have the following estimates on the quadrature error:
\begin{thm}
For $n=2$ and a sufficiently smooth function $a(x,y)$, if $k\geq 2$ and  $m$ is an integer satisfying $k\leq  m\leq 2k$, 
we have 
\label{quaderror-theorem}
\[  \iint_e a(x,y)dxdy-  \iint_e a_I(x,y)dxdy=\mathcal O(h^{m+\frac{n}{2}})[a]_{m,e}=\mathcal O(h^{m+n})[a]_{m,\infty,e}.\]

\end{thm}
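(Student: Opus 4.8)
The plan is to transfer everything to the reference cell $\hat K$ and to recognize the quadrature error as a single linear functional to which the Bramble--Hilbert machinery of Theorem \ref{bh-lemma} applies. First I would set $\hat a(s,t)=a(sh+x_e,th+y_e)$ and use the change of variables $dx\,dy=h^n\,ds\,dt$ together with the corresponding scaling of the Gauss--Lobatto weights (and the fact that the $Q^k$ interpolant commutes with this affine pullback) to write
\[
\iint_e a\,dx\,dy-\iint_e a_I\,dx\,dy = h^{n}\,\hat E(\hat a),\qquad
\hat E(\hat a):=\iint_{\hat K}\hat a\,ds\,dt-\sum_{i=1}^{k+1}\sum_{j=1}^{k+1}\hat a(s_i,t_j)w_iw_j,
\]
where I have already used the identity established just before the theorem that the quadrature sum equals $\iint_{\hat K}\hat a_I\,ds\,dt$. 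Thus it suffices to bound the fixed linear functional $\hat E$ on $H^m(\hat K)$.

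The two properties I need are continuity and polynomial annihilation. For continuity, since $m\geq k\geq 2$ the Sobolev embedding $H^m(\hat K)\hookrightarrow H^2(\hat K)\hookrightarrow C^0(\hat K)$ makes every point evaluation $\hat a\mapsto\hat a(s_i,t_j)$ bounded, whence $|\hat E(\hat a)|\leq C\|\hat a\|_{0,\infty,\hat K}\leq C\|\hat a\|_{m,\hat K}$, so $\hat E$ is a continuous linear form on $H^m(\hat K)$. For annihilation, the crucial input is that the $(k+1)\times(k+1)$ Gauss--Lobatto rule is exact for $Q^{2k-1}(\hat K)$; since the hypothesis $m\leq 2k$ gives $m-1\leq 2k-1$, i.e.\ $Q^{m-1}(\hat K)\subseteq Q^{2k-1}(\hat K)$, the rule is exact on $Q^{m-1}$ and therefore $\hat E(p)=0$ for every $p\in Q^{m-1}(\hat K)$.

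With these two properties in hand I would invoke the Bramble--Hilbert Lemma, Theorem \ref{bh-lemma}, with $k$ there replaced by $m-1$ (the proof of that lemma is insensitive to the polynomial degree): a continuous linear form on $H^m(\hat K)$ that vanishes on $Q^{m-1}(\hat K)$ obeys $|\hat E(\hat a)|\leq C[\hat a]_{m,\hat K}$, the dual norm being a constant absorbed into $C$. Scaling back via \eqref{scaling} with $p=2$ gives $[\hat a]_{m,\hat K}=h^{m-n/2}[a]_{m,e}$, so combining with the factor $h^n$ above yields
\[
\left|\iint_e a\,dx\,dy-\iint_e a_I\,dx\,dy\right|\leq C\,h^{n}\,h^{m-n/2}[a]_{m,e}=\mathcal O\!\left(h^{m+\frac n2}\right)[a]_{m,e}.
\]
The second stated bound then follows from H\"{o}lder's inequality on the cell, $[a]_{m,e}=[a]_{m,2,e}\leq |e|^{1/2}[a]_{m,\infty,e}=\mathcal O(h^{n/2})[a]_{m,\infty,e}$, which upgrades $h^{m+n/2}[a]_{m,e}$ to $\mathcal O(h^{m+n})[a]_{m,\infty,e}$.

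I expect the only genuinely delicate point to be the bookkeeping around the admissible window for $m$: the upper bound $m\leq 2k$ is exactly what forces $\hat E$ to annihilate $Q^{m-1}$ (and hence what lets the seminorm $[\cdot]_m$ appear), while the lower bound $m\geq k\geq 2$ is what guarantees the Sobolev embedding and thus the boundedness of the point-evaluation functionals; outside this window either the annihilation or the continuity would fail. Everything else—the change of variables, the scaling of the weights, and the substitution $k\to m-1$ in the Bramble--Hilbert estimate—is routine.
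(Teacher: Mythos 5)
Your proof is correct and follows essentially the same route as the paper's: map to the reference cell, note that the quadrature error functional $\hat E$ is continuous on $H^m(\hat K)$ by the Sobolev embedding and vanishes on $Q^{m-1}(\hat K)$ by exactness of the $(k+1)$-point Gauss--Lobatto rule for $Q^{2k-1}$, apply the Bramble--Hilbert Lemma, and scale back via \eqref{scaling}. The only difference is that you spell out two steps the paper leaves implicit (why $m\leq 2k$ gives the annihilation of $Q^{m-1}$, and the H\"{o}lder step converting $[a]_{m,e}$ to $h^{n/2}[a]_{m,\infty,e}$), which is harmless.
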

\begin{proof}
Let $E(a)$ denote the quadrature error for function $a(x,y)$ on $e$. 
Let $\hat E(\hat a)$ denote the quadrature error for the function $\hat a(s,t)=a(sh+x_e, th+y_e)$ 
on the reference cell $\hat K$. Then for any $\hat f\in H^{m}(\hat K)$ ($m\geq k\geq2$), since quadrature are represented by point values,  with the Sobolev's embedding 
we have $$|\hat E(\hat f)|\leq C |\hat f|_{0,\infty,\hat K}\leq C\|\hat f\|_{m,2,\hat K}.$$ Thus $\hat E(\cdot)$ is a continuous linear form on $H^{m}(\hat K)$
and  $\hat E(\hat f)=0$ if $\hat f\in Q^{m-1}(\hat K)$. With \eqref{scaling}, the Bramble-Hilbert lemma
implies
\[|E(a)|=h^n|\hat E(\hat a)|\leq Ch^n[\hat a]_{m, 2, \hat K}=\mathcal O(h^{m+\frac{n}{2}})[a]_{m,2, e}=\mathcal O(h^{m+n})[a]_{m,\infty, e}. \]
\end{proof}

\begin{thm}
\label{rhs-estimate}
 If $k\geq 2$, $(f,v_h)-\langle f,v_h\rangle_h =\mathcal O(h^{k+2}) \|f\|_{k+2} \|v_h\|_2,\quad\forall v_h\in V^h.$
\end{thm}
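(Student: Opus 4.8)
The plan is to reduce everything to the reference cell and then control the local quadrature-error functional by splitting the test function $v_h$ according to its polynomial degree, pairing each piece with a derivative of $f$ of matching order so that the estimate stays homogeneous of total order $k+2$.

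First I would write $(f,v_h)-\langle f,v_h\rangle_h=\sum_e\left[\iint_e fv_h\,dxdy-\text{(quadrature on }e)\right]$ and rescale each cell to $\hat K$ via \eqref{scaling}, so the cell contribution equals $h^n\hat E(\hat f\hat v_h)$, where $\hat E(\hat g)=\iint_{\hat K}\hat g\,dsdt-\sum_{i,j}w_iw_j\hat g(s_i,s_j)$ is the reference quadrature-error functional. Exactly as in the proof of Theorem \ref{quaderror-theorem}, $\hat E$ is continuous on $H^m(\hat K)$ for $m\ge 2$ (being represented by point values, $|\hat E(\hat g)|\le C\|\hat g\|_{0,\infty,\hat K}\le C\|\hat g\|_{m,\hat K}$ by Sobolev embedding), and $\hat E$ annihilates $Q^{2k-1}(\hat K)$. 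The crucial observation is that for a fixed polynomial $\hat v\in Q^d(\hat K)$, the linear form $\hat f\mapsto\hat E(\hat f\hat v)$ annihilates every $\hat f\in Q^{2k-1-d}(\hat K)$, since then $\hat f\hat v\in Q^{2k-1}(\hat K)$; moreover its dual norm is bounded by $\|\hat v\|_{0,\hat K}$ (all norms of the finite-dimensional factor $\hat v$ being equivalent).

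The key step is the degree-balanced decomposition $\hat v_h=\hat v_h^{(0)}+\hat v_h^{(1)}+\hat v_h^{(2)}$, where $\hat v_h^{(0)}=\overline{\hat v_h}$ is the cell average, $\hat v_h^{(1)}=\hat\Pi_1\hat v_h-\hat v_h^{(0)}$ is the non-constant part of the projection \eqref{projection1}, and $\hat v_h^{(2)}=\hat v_h-\hat\Pi_1\hat v_h$. Applying the Bramble--Hilbert lemma (Theorem \ref{bh-lemma}, used with the appropriate degree) to $\hat f$ in each term, and using that $\hat v_h^{(0)}$ frees the budget $Q^{2k-1}\supseteq Q^{k+1}$, that $\hat v_h^{(1)}$ (degree $\le 1$ in each variable) frees $Q^{2k-2}\supseteq Q^{k}$, and that $\hat v_h^{(2)}\in Q^k$ frees $Q^{k-1}$ (all valid for $k\ge 2$), I would obtain
\begin{align*}
|\hat E(\hat f\hat v_h^{(0)})|&\le C\,[\hat f]_{k+2,\hat K}\,\|\hat v_h^{(0)}\|_{0,\hat K},\\
|\hat E(\hat f\hat v_h^{(1)})|&\le C\,[\hat f]_{k+1,\hat K}\,|\hat v_h|_{1,\hat K},\\
|\hat E(\hat f\hat v_h^{(2)})|&\le C\,[\hat f]_{k,\hat K}\,[\hat v_h]_{2,\hat K},
\end{align*}
where the factors $|\hat v_h|_{1}$ and $[\hat v_h]_{2}$ arise from $\|\hat v_h^{(1)}\|_{0,\hat K}\le C|\hat v_h|_{1,\hat K}$ (the non-constant part vanishes on constants) and from $\|\hat v_h^{(2)}\|_{0,\hat K}\le\|\hat v_h-\hat\Pi_1\hat v_h\|_{2,\hat K}\le C[\hat v_h]_{2,\hat K}$ (Bramble--Hilbert for $\hat\Pi_1$, which reproduces $Q^1$). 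Each of the three right-hand sides is homogeneous of total order $k+2$, so after scaling back with \eqref{scaling} the cell contribution $h^n\hat E$ is $\mathcal O(h^{k+2})$ times a product of local $L^2$ seminorms of $f$ (of order $\le k+2$) and of $v_h$ (of order $\le 2$); summing over cells and applying Cauchy--Schwarz together with $\sum_e\|\cdot\|_{m,e}^2=\|\cdot\|_{m,\Omega}^2$ yields $\mathcal O(h^{k+2})\|f\|_{k+2}\|v_h\|_2$.

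I expect the main obstacle to be getting this balance exactly right, uniformly in $k$. A single naive application of Bramble--Hilbert only annihilates $\hat f\in Q^{k-1}$, because the factor $v_h\in Q^k$ consumes almost all of the $Q^{2k-1}$ exactness; this loses two orders and would force the stronger norm $\|v_h\|_k$. The decomposition above is what recovers the sharp order while keeping only $\|v_h\|_2$, and the delicate point is the borderline case $k=2$: there the constant and linear pieces must be measured in the weaker quantities $\|\hat v_h^{(0)}\|_0$ and $|\hat v_h|_1$ (whose scalings carry nonnegative powers of $h$) rather than being absorbed into an $L^2$ norm divided by $h$, otherwise one order is irretrievably lost.
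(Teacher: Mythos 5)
Your proof is correct and follows essentially the same route as the paper's: both reduce to the reference cell, split $\hat v_h$ using the local $Q^1$ projection $\hat\Pi_1$ of \eqref{projection1}, and apply the Bramble--Hilbert lemma with degree-matched pairings to reach the same local bound $Ch^{k+2}\left([f]_{k+2,e}|v_h|_{0,e}+[f]_{k+1,e}|v_h|_{1,e}+[f]_{k,e}[v_h]_{2,e}\right)$ before summing with Cauchy--Schwarz. The only cosmetic difference is that the paper keeps $\hat\Pi_1\hat v_h$ as a single piece and extracts the constant/linear balance via the Leibniz rule applied to $[\hat f\,\hat\Pi_1\hat v_h]_{k+2,\hat K}$ (using that pure derivatives of order $\geq 2$ of a $Q^1$ function vanish), whereas you split it explicitly into cell average plus linear remainder and apply Bramble--Hilbert to $\hat f$ against each fixed polynomial factor.
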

\begin{proof}
 This result is a special case of Theorem 5 in \cite{ciarlet1972combined}.
 For completeness, we include a proof. 
 Let $\hat E(\cdot)$ denote the quadrature error term on the reference cell $\hat K$. 
Consider the projection \eqref{projection1}. Let $\Pi_1$ denote the same 
projection on $e$. Since $\hat\Pi_1$ leaves $Q^0(\hat K)$ invariant,
by the Bramble-Hilbert lemma on $\hat\Pi_1$, we get $[\hat v_h-\hat \Pi_1 \hat v_h]_{1,\hat K}\leq \|\hat v_h-\hat \Pi_1 \hat v_h\|_{1,\hat K} \leq C [\hat v_h]_{1,\hat K}$ thus $[\hat \Pi_1 \hat v_h]_{1,\hat K}\leq [\hat v_h]_{1,\hat K}+[\hat v_h-\hat\Pi_1 \hat v_h]_{1,\hat K}\leq C [\hat v_h]_{1,\hat K}$.
By setting $w=\hat \Pi_1\hat v_h$ in \eqref{projection1}, we get $|\hat{\Pi}_1 \hat{v}_h|_{0,\hat K}\leq  |\hat v_h|_{0,\hat K}$.
 For $k\geq 2$, repeat the proof of Theorem \ref{quaderror-theorem}, we can get 
\[|\hat E(\hat f\hat \Pi_1 \hat v_h)|\leq C [\hat f\hat \Pi_1 \hat v_h]_{k+2,\hat K}
\leq C ([\hat f]_{k+2,\hat K} | \hat\Pi_1 \hat v_h|_{0,\infty,\hat K}+[\hat f]_{k+1,\hat K} |\hat \Pi_1 \hat v_h|_{1,\infty,\hat K}),\]
where the fact $[\hat\Pi_1 \hat v_h]_{l,\infty,\hat K}=0$ for $l\geq 2$ is used. The equivalence of norms over $Q^1(\hat K)$ implies  
\begin{align*}
 |\hat E(\hat f \hat\Pi_1 \hat v_h)|\leq C([\hat f]_{k+2,\hat K} | \hat\Pi_1 \hat v_h|_{0,\hat K}+[\hat f]_{k+1,\hat K} |\hat \Pi_1 \hat v_h|_{1,\hat K})\\ \leq C([\hat f]_{k+2,\hat K} |  \hat v_h|_{0,\hat K}+[\hat f]_{k+1,\hat K} |\hat v_h|_{1,\hat K}). 
\end{align*}
 Next consider
 the linear form $\hat f\in H^k(\hat K)\rightarrow \hat E(\hat f (\hat v_h-\hat\Pi_1 \hat v_h))$. Due to the embedding $H^{k}(\hat K)\hookrightarrow C^0(\hat K)$, it is continuous with operator norm $\leq C\|\hat v_h-\hat\Pi_1 \hat v_h\|_{0,\hat K}$ since
 \begin{align*}|\hat E(\hat f (\hat v_h-\hat\Pi_1 \hat v_h))|\leq C |\hat f (\hat v_h-\hat\Pi_1 \hat v_h)|_{0,\infty,\hat K}\leq C |\hat f |_{0,\infty,\hat K}|\hat v_h-\hat\Pi_1 \hat v_h|_{0,\infty,\hat K} \\
 \leq C\|\hat f\|_{k,\hat K} \|\hat v_h-\hat\Pi_1 \hat v_h\|_{0,\hat K}.\end{align*}
For any 
$\hat f\in Q^{k-1}(\hat K)$, $\hat E(\hat f \hat v_h)=0$.
By the Bramble-Hilbert lemma, we get  
$$|\hat E(\hat f (\hat v_h-\hat\Pi_1 \hat v_h))|\leq C [\hat f]_{k,\hat K} \|\hat v_h-\hat\Pi_1 \hat v_h\|_{0,\hat K} \leq C[\hat f]_{k,\hat K} [\hat v_h]_{2,\hat K}.$$ 
So on a cell $e$, with \eqref{scaling}, we get
$$E(fv_h)=h^n\hat E (\hat f\hat v_h)=C h^{k+2}([ f]_{k+2, e} | v_h|_{0, e}+
[ f]_{k+1, e} | v_h|_{1, e}+[ f]_{k, e} [ v_h]_{2, e}).$$ 
 Summing over $e$ and use Cauchy Schwarz inequality, we get the desired result. 
\end{proof}
\begin{thm}
\label{rhs-inte-estimate}
 For $k\geq 2$, $(f,v_h)-(f_I,v_h) =\mathcal O(h^{k+2}) \|f\|_{k+2} \|v_h\|_2,\quad\forall v_h\in V^h.$
\end{thm}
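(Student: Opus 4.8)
The plan is to reduce this estimate to the quadrature-error bound already proved in Theorem \ref{rhs-estimate}. The crucial observation is that $f$ and its interpolant $f_I$ agree at every Gauss--Lobatto point of $Z_0$ by the very definition of $f_I$, while the quadrature $\langle\cdot,\cdot\rangle_h$ uses only values at those points. Since the factor $v_h$ is common to both, $(fv_h)(\mathbf x)=(f_Iv_h)(\mathbf x)$ for all $\mathbf x\in Z_0$, and therefore $\langle f,v_h\rangle_h=\langle f_I,v_h\rangle_h$. This identity lets me write
\[(f,v_h)-(f_I,v_h)=\big[(f,v_h)-\langle f,v_h\rangle_h\big]-\big[(f_I,v_h)-\langle f_I,v_h\rangle_h\big],\]
so the whole problem splits into two quadrature-error terms, the second of which involves only the piecewise polynomial $f_I$.

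The first bracket is exactly $\mathcal O(h^{k+2})\|f\|_{k+2}\|v_h\|_2$ by Theorem \ref{rhs-estimate}. For the second bracket I would reuse the proof of Theorem \ref{rhs-estimate} verbatim, with $f$ replaced by $f_I$: that argument produces the per-cell bound $|E(fv_h)|\le Ch^{k+2}\big([f]_{k+2,e}|v_h|_{0,e}+[f]_{k+1,e}|v_h|_{1,e}+[f]_{k,e}[v_h]_{2,e}\big)$, and each step (the $\hat\Pi_1$-splitting, the Bramble--Hilbert applications, the scaling \eqref{scaling}) is valid for any function in the relevant Sobolev spaces, in particular for $f_I$. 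Since $f_I|_e\in Q^k(e)$, the seminorms $[f_I]_{k+1,e}$ and $[f_I]_{k+2,e}$ vanish, leaving only $Ch^{k+2}[f_I]_{k,e}[v_h]_{2,e}$. Summing over cells and applying Cauchy--Schwarz gives $|(f_I,v_h)-\langle f_I,v_h\rangle_h|\le Ch^{k+2}[f_I]_{k,\Omega}[v_h]_{2,\Omega}\le Ch^{k+2}[f_I]_{k,\Omega}\|v_h\|_2$. To control $[f_I]_{k,\Omega}$ I would use $[f_I]_{k,e}\le[f]_{k,e}+[f-f_I]_{k,e}$ together with the standard $Q^k$ interpolation estimate $[f-f_I]_{k,e}\le|f-f_I|_{k,e}\le Ch\,|f|_{k+1,e}$; squaring and summing yields $[f_I]_{k,\Omega}\le C(\|f\|_{k,\Omega}+h\,|f|_{k+1,\Omega})\le C\|f\|_{k+2,\Omega}$. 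Combining the two brackets then gives the claimed bound.

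The only genuinely delicate point is the second bracket, and it is the step I expect to be the main obstacle. One must resist estimating the quadrature error of $f_Iv_h\in Q^{2k}(e)$ by a single direct Bramble--Hilbert application: exactness of the quadrature on $Q^{2k-1}$ would then leave a residual controlled by $[v_h]_{k}$ rather than $[v_h]_{2}$, and $[v_h]_k$ is \emph{not} dominated by the target norm $\|v_h\|_2$ when $k>2$. Following instead the $\hat\Pi_1$-splitting of $v_h$ used in Theorem \ref{rhs-estimate} is precisely what keeps the $v_h$-dependence at the level of its second derivatives, so that the final bound reads in $\|v_h\|_2$. Everything after that---scaling, Cauchy--Schwarz, and the interpolation estimate for $[f_I]_{k,\Omega}$---is routine.
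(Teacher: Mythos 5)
Your proof is correct and follows essentially the same route as the paper: both hinge on the observation that the Gauss--Lobatto quadrature cannot distinguish $f$ from $f_I$ (the paper phrases it as $\langle f-f_I,v_h\rangle_h=0$, you as $\langle f,v_h\rangle_h=\langle f_I,v_h\rangle_h$), then rerun the proof of Theorem \ref{rhs-estimate} exploiting $[f_I]_{k+1,e}=[f_I]_{k+2,e}=0$ and the interpolation bound $[f-f_I]_{k,e}\leq Ch[f]_{k+1,e}$. The only difference is a trivial regrouping by linearity: the paper applies the per-cell estimate once to $f-f_I$, whereas you apply it to $f$ and $f_I$ separately.
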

\begin{proof}
Repeat the proof of Theorem \ref{rhs-estimate} for the function  $f-f_I$ on a cell $e$, with the fact $[f_I]_{k+1,p,e}=[f_I]_{k+2,p,e}=0$, we get 
$$E[(f-f_I)v_h]=C h^{k+2}([ f]_{k+2, e} | v_h|_{0, e}+
[ f]_{k+1, e} | v_h|_{1, e}+[ f-f_I]_{k, e} | v_h|_{2, e}).$$ 
By \eqref{bh1} on the Lagrange interpolation operator and the fact $[f-f_I]_{k,e}\leq \|f-f_I\|_{k+1,e}$, we get $[f-f_I]_{k,e}\leq Ch[f]_{k+1,e}$.
Notice that $\langle f-f_I, v_h\rangle_h=0$,  with \eqref{scaling}, we get
\[
(f,v_h)-(f_I,v_h)=(f-f_I,v_h)-\langle f-f_I, v_h\rangle_h=\mathcal O(h^{k+2}) \|f\|_{k+2} \|v_h\|_2,\forall v_h\in V^h.
\]

\end{proof}
\section{The M-type Projection}
\label{sec-projection}
To establish the superconvergence of $C^0$-$Q^k$  finite element method
for multi-dimensional variable coefficient equations, it is necessary to use
a special polynomial projection of the exact solution, which has two equivalent definitions. One is the 
M-type projection used in \cite{MR635547, chen2001structure}. 
The other one is 
the point-line-plane interpolation used in \cite{lin1991rectangle, lin1996}. 

For the sake of completeness, we review the relevant results regarding  M-type projection, which is a more convenient tool. Most results in this section were considered and established for more general rectangular elements  in \cite{chen2001structure}. 
For simplicity, we use some simplified proof and arguments for $Q^k$ element in this section. We only discuss the two dimensional case and the extension to three dimensions is straightforward. 
\subsection{One dimensional case}
The $L^2$-orthogonal  Legendre polynomials on the reference 
interval $\hat K=[-1,1]$ are given as 
\[l_k(t)=\frac{1}{2^k k!}\frac{d^k}{dt^k} (t^2-1)^k: l_0(t)=1, l_1(t)=t, l_2(t)=\frac12(3t^2-1), \cdots\]
Define their antiderivatives as M-type polynomials:
\[M_{k+1}(t)=\frac{1}{2^k k!}\frac{d^{k-1}}{dt^{k-1}} (t^2-1)^k: M_0(t)=1, M_1(t)=t, M_2(t)=\frac12(t^2-1), M_3(t)=\frac12(t^3-t),\cdots\]
which satisfy the following properties:
\begin{itemize}
 \item $M_k(\pm1)=0, \forall k\geq 2.$
 \item If $j-i\neq 0, \pm2$, then $M_i(t)\perp M_j(t)$, i.e., $\int_{-1}^1 M_i(t)M_j(t) dt=0.$
 \item Roots of $M_k(t)$ are the $k$-point Gauss-Lobatto quadrature points for $[-1,1]$. 
\end{itemize}
Since Legendre polynomials form a complete orthogonal basis for $L^{2}([-1,1])$,
for any $f(t)\in H^1([-1,1])$,  its derivative $f'(t)$ can be expressed as
Fourier-Legendre series
\[f'(t)=\sum_{j=0}^{\infty}b_{j+1}l_j(t), \quad b_{j+1}=(j+\frac12)\int_{-1}^1 f'(t)l_j(t)dt.\]
Define the M-type projection 
\[
f_k(t)=\sum_{j=0}^{k}b_{j}M_j(t),
\] 
where $b_0=\frac{f(1)+f(-1)}{2}$ is determined by $b_1=\frac{f(1)-f(-1)}{2}$
to make $f_k(\pm 1)=f(\pm 1)$. Since the Fourier-Legendre series converges in $L^2$, by Cauchy Schwarz inequality,
\[
\lim_{k\to \infty}f_k(t) -f(t)=\lim_{k\to \infty}\int_{-1}^t \left[f_k'(x)-f'(x)\right] dx \leq   \lim_{k\to \infty}\sqrt{2}\|f_k'(t)-f'(t)\|_{L^2([-1,1])}=0.
\]
We get the M-type expansion of $f(t)$:
$
f(t)=\lim\limits_{k\to \infty}f_k(t)=\sum\limits_{j=0}^{\infty}b_{j}M_j(t).
$
The remainder $R_k(t)$ of M-type projection  is
\[R[f]_k(t)=f(t)-f_k(t)=\sum_{j=k+1}^{\infty}b_{j}M_j(t).\]
The following properties are straightforward to verify: 
\begin{itemize}
 \item $f_k(\pm1)=f(\pm1)$ thus $R_k(\pm1)=0$ for $k\geq 1$. 
\item $R[f]_k(t)\perp v(t)$ for any $v(t)\in P^{k-2}(t)$ on $[-1,1]$, i.e., 
$\int_{-1}^1 R[f]_k v dt=0$. 
\item $R[f]_k'(t)\perp v(t)$ for any $v(t)\in P^{k-1}(t)$ on $[-1,1]$. 
\item For $j\geq 2$, $b_j=(j-\frac12)[\left.f(t)l_{j-1}(t)\right|_{-1}^1]
-\int_{-1}^1 f(t)l'(j-1)(t)dt.$
\item For $j\leq k$, $|b_j|\leq C_k \|f\|_{0, \infty, \hat K}.$
\item $\|R[f]_k(t)\|_{0, \infty, \hat K}\leq C_k\|f\|_{0, \infty, \hat K}.$

\end{itemize}
\subsection{Two dimensional case}
Consider a function $\hat f(s,t)\in H^2(\hat K)$ on the reference cell  $\hat K=[-1,1]\times[-1,1]$, it has the expansion  
\[\hat f(s,t)=\sum_{i=0}^\infty\sum_{j=0}^\infty \hat b_{i,j} M_i(s)M_j(t),\]
where 
\begin{align*}
\hat b_{0,0}&=\frac14[\hat f(-1,-1)+\hat f(-1,1)+\hat f(1,-1)+\hat f(1,1)],\\
\hat b_{0,j}, \hat b_{1,j}&=\frac{2j-1}{4}\int_{-1}^1 [\hat f_t(1,t)\pm \hat f_t(-1,t)]l_{j-1}(t)dt, \quad j\geq 1,\\
\hat b_{i,0}, \hat b_{i,1}&=\frac{2i-1}{4}\int_{-1}^1 [\hat f_s(s,1)\pm \hat f_s(s,-1)]l_{i-1}(s)ds, \quad i\geq 1,\\
\hat b_{i,j}&=\frac{(2i-1)(2j-1)}{4}\iint_{\hat K}\hat f_{st}(s,t)l_{i-1}(s)l_{j-1}(t)dsdt, 
\quad i,j\geq 1.\end{align*}
Define the $Q^k$ M-type projection of $\hat f$ on $\hat K$ and its remainder as
\[\hat f_{k,k}(s,t)=\sum_{i=0}^k\sum_{j=0}^k \hat b_{i,j} M_i(s)M_j(t), \quad \hat R[\hat f]_{k,k}(s,t)=\hat f(s,t)-\hat f_{k,k}(s,t).\]
For $f(x,y)$ on $e=[x_e-h, x_e+h]\times [y_e-h, y_e+h]$, let $\hat f(s,t)=
f( sh+x_e,  t h+y_e)$ then  the $Q^k$ M-type projection of $f$ on $e$ and its remainder are defined as 
\[f_{k,k}(x,y)=\hat f_{k,k}(\frac{x-x_e}{h},\frac{y-y_e}{h}), \quad R[ f]_{k,k}(x,y)= f(x,y)- f_{k,k}(x,y).\]

\begin{thm}
\label{plp-projection-theorem}
 The $Q^k$ M-type projection is equivalent to the $Q^k$ point-line-plane projection $\Pi$ defined as
 follows:
 \begin{enumerate}
  \item $\Pi \hat u=\hat u$ at four corners of $\hat K=[-1,1]\times[-1,1]$.
  \item $\Pi \hat u-\hat u$ is orthogonal to polynomials of degree $k-2$ on each edge of $\hat K$.
  \item $\Pi \hat u-\hat u$ is orthogonal to any $v\in Q^{k-2}(\hat K)$ on $\hat K$. 
  \end{enumerate}
\end{thm}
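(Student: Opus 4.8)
The plan is to verify that the $Q^k$ M-type projection $\hat f_{k,k}$ satisfies the three defining properties of $\Pi$, and then to check that these three properties determine at most one element of $Q^k(\hat K)$; together these give $\hat f_{k,k}=\Pi \hat f$. Throughout I would work in the tensor-product $M$-basis and exploit the explicit form of the remainder $\hat R[\hat f]_{k,k}(s,t)=\sum \hat b_{i,j}M_i(s)M_j(t)$, where the sum runs over all $(i,j)$ with $i>k$ or $j>k$. The entire verification then reduces to the one-dimensional facts already listed for $M_j$ and for the remainder $R[f]_k$.

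For property 1, at any corner $M_i(\pm1)=0$ for $i\ge 2$, so a term $\hat b_{i,j}M_i(\pm1)M_j(\pm1)$ can survive only when $i,j\in\{0,1\}$; but such indices satisfy $i,j\le k$ and are absent from $\hat R[\hat f]_{k,k}$, hence $\hat R[\hat f]_{k,k}$ vanishes at all four corners. For property 2, restricting $\hat R[\hat f]_{k,k}$ to an edge, say $s=\pm1$, again forces $i\in\{0,1\}$, leaving a combination of $M_j(t)$ with $j\ge k+1$; since $|j-m|\ge 3$ for every $m\le k-2$, each such $M_j$ is $L^2$-orthogonal to $P^{k-2}$, so the edge trace is orthogonal to $P^{k-2}$. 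For property 3, I would test $\hat R[\hat f]_{k,k}$ against a tensor product $p(s)q(t)$ with $p,q\in P^{k-2}$; in each term at least one of $i,j$ exceeds $k$, so the corresponding one-dimensional factor $\int M_i p$ or $\int M_j q$ vanishes by the same orthogonality, and since such products span $Q^{k-2}(\hat K)$, property 3 follows.

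Finally I would establish uniqueness. The conditions impose $4$ constraints at the corners, $4(k-1)$ edge constraints, and $(k-1)^2$ interior constraints, totalling $(k+1)^2=\dim Q^k(\hat K)$, so it suffices to show the homogeneous system has only the zero solution. Expanding a candidate $q=\sum_{i,j=0}^k c_{ij}M_i(s)M_j(t)$, the corner equations give $c_{ij}=0$ for $i,j\in\{0,1\}$. The main obstacle is the edge and interior steps, because the individual $M_i$ with $2\le i\le k$ are \emph{not} orthogonal to all of $P^{k-2}$. To handle this I would use the identity $M_i=\frac{1}{2i-1}(l_i-l_{i-2})$ for $i\ge2$ (immediate from the definitions): on each edge the trace becomes $\sum_{i\ge2}d_i(l_i-l_{i-2})$, and matching its Legendre coefficients on $l_0,\dots,l_{k-2}$ to zero yields a triangular recursion forcing every $d_i=0$, hence, using both opposite edges, $c_{i0}=c_{i1}=0$ and $c_{0j}=c_{1j}=0$. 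For the remaining block $2\le i,j\le k$ the interior condition reads $G^{T} C\, G=0$ with $G_{i,a}=\int_{-1}^1 M_i l_a$, $2\le i\le k$, $0\le a\le k-2$; the same identity shows $G$ is, after reindexing, upper triangular with nonzero diagonal, hence invertible, so $C=0$. This completes uniqueness and identifies the two projections.
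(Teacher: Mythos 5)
Your proposal is correct, and its first half coincides with the paper's own proof: the paper likewise verifies that the M-type projection satisfies the three properties, using exactly the facts you use --- $M_i(\pm 1)=0$ for $i\geq 2$ kills all remainder terms at the corners and (in one variable) on the edges, and $M_j\perp P^{k-2}$ for $j\geq k+1$ gives the edge and interior orthogonalities. Where you genuinely go beyond the paper is the uniqueness step. The paper never proves that the three conditions determine at most one element of $Q^k(\hat K)$; it implicitly takes the well-posedness of the point-line-plane interpolation for granted (this is established in the cited literature on that interpolation). Strictly speaking, without uniqueness, checking that the M-type projection satisfies the three properties does not by itself identify the two projections, so your dimension count $4+4(k-1)+(k-1)^2=(k+1)^2$, the edge recursion, and the triangular-matrix argument based on the identity $M_i=\frac{1}{2i-1}(l_i-l_{i-2})$ (which is indeed immediate, since both sides are the unique antiderivative of $l_{i-1}$ vanishing at $\pm 1$) supply a step the paper outsources to its references. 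The cost is length; what it buys is a fully self-contained proof of the equivalence stated in the theorem.
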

\begin{proof}
 We only need to show that  M-type projection $\hat f_{k,k}(s,t)$ satisfies 
 the same three properties. By $M_j(\pm1)=0$ for $j\geq 2$, we can derive that
 $\hat f_{k,k}=\hat f$ at $(\pm1,\pm1)$. For instance, $\hat f_{k,k}(1,1)=\hat b_{0,0}+\hat b_{1,0}+\hat b_{0,1}+\hat b_{1,1}=\hat f(1,1)$.
 
 The second property is implied by $M_j(\pm1)=0$ for $j\geq 2$ and $M_j(t)\perp P^{k-2}(t)$ for $j\geq k+1$. For instance, at $s=1$, $\hat f_{k,k}(1,t)-\hat f(1,t)=\sum\limits_{j=k+1}^\infty (\hat b_{0,j}+\hat b_{1,j}) M_j(t)\perp P^{k-2}(t)$ on $[-1,1]$. 
 
 The third property is implied by  $M_j(t)\perp P^{k-2}(t)$ for $j\geq k+1$.
 \end{proof}

\begin{lemma} 
\label{lemma-bij}
Assume $\hat f\in H^{k+1}(\hat K)$ with $k\geq 2$, then 
\begin{enumerate}
 \item $|\hat b_{i,j}|\leq C_k \|\hat f\|_{0,\infty, \hat K},\quad \forall i,j\leq k$.
 \item $|\hat b_{i,j}|\leq C_k |\hat f|_{i+j,2,\hat K},\quad \forall i,j\geq 1, i+j\leq k+1.$
 \item 
 $|\hat b_{i,k+1}|\leq C_k |\hat f|_{k+1,2,\hat K},\quad 0\leq i\leq k+1.$
  \item If $\hat f\in H^{k+2}(\hat K)$, then $|\hat b_{i,k+1}|\leq C_k |\hat f|_{k+2,2,\hat K},\quad 1\leq i\leq k+1.$
\end{enumerate}
\end{lemma}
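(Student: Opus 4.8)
The plan is to treat each coefficient $\hat b_{i,j}$ as a bounded linear functional of $\hat f$ and to bound it in the appropriate norm or seminorm: for item 1 I would strip off all derivatives by integration by parts to obtain an $L^\infty$ bound, while for items 2--4 I would apply the Bramble--Hilbert lemma after identifying the largest polynomial space annihilated by the functional. For item 1, $\hat b_{0,0}$ is a mean of four corner values, hence $\leq\|\hat f\|_{0,\infty,\hat K}$; for the mixed $\hat b_{i,j}$ with $i,j\geq1$ I would integrate by parts once in $s$ and once in $t$ in $\hat b_{i,j}=\frac{(2i-1)(2j-1)}{4}\iint_{\hat K}\hat f_{st}\,l_{i-1}(s)l_{j-1}(t)\,dsdt$, turning the $\hat f_{st}$ into corner values of $\hat f$, one-dimensional edge integrals of $\hat f$, and the single interior integral $\iint_{\hat K}\hat f\,l_{i-1}'(s)l_{j-1}'(t)\,dsdt$, each bounded by $C_k\|\hat f\|_{0,\infty,\hat K}$ through the uniform bounds on $l_m,l_m'$ for $m\leq k$; the edge coefficients $\hat b_{0,j},\hat b_{1,j}$ (and symmetrically $\hat b_{i,0},\hat b_{i,1}$) use the single integration by parts already recorded in the one-dimensional identity $b_j=(j-\tfrac12)\big[f\,l_{j-1}\big]_{-1}^1-\int_{-1}^1 f\,l_{j-1}'\,dt$.

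For items 2--4 continuity is routine: the double-integral functionals satisfy $|\hat b_{i,j}|\leq C_k|\hat f|_{2,2,\hat K}$ by Cauchy--Schwarz and are thus continuous on every $H^m$ with $m\geq2$, while the edge functionals are controlled using $\|\hat f\|_{1,\infty,\hat K}\leq C\|\hat f\|_{k+1,2,\hat K}$ from the embedding $H^{k+1}(\hat K)\hookrightarrow C^1(\hat K)$. The decisive input is the annihilation count: testing the double-integral functional on a monomial $s^at^b$ produces $\big(\int_{-1}^1 s^{a-1}l_{i-1}\,ds\big)\big(\int_{-1}^1 t^{b-1}l_{j-1}\,dt\big)$, which by the orthogonality $l_m\perp P^{m-1}$ vanishes unless $a\geq i$ and $b\geq j$ simultaneously; hence $\hat b_{i,j}$ kills every monomial of total degree $<i+j$, i.e.\ all of $P^{i+j-1}(\hat K)$, and Bramble--Hilbert yields $|\hat b_{i,j}|\leq C_k|\hat f|_{i+j,2,\hat K}$, which is item 2. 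Equivalently one may apply Rodrigues' formula and integrate by parts $i-1$ times in $s$ and $j-1$ times in $t$ to rewrite $\hat b_{i,j}=C\iint_{\hat K}\partial_s^i\partial_t^j\hat f\,(s^2-1)^{i-1}(t^2-1)^{j-1}\,dsdt$ and conclude by Cauchy--Schwarz.

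Items 3 and 4 are the same count read with $j=k+1$. For $i\geq1$ the functional vanishes unless $a\geq i\geq1$ and $b\geq k+1$, so no monomial of total degree $\leq k+1$ survives: it annihilates $P^{k+1}(\hat K)$, and Bramble--Hilbert at level $k+2$ gives $|\hat b_{i,k+1}|\leq C_k|\hat f|_{k+2,2,\hat K}$, which is item 4. Item 3 is the weaker bound obtained by annihilating only $P^{k}(\hat K)$ together with continuity on $H^{k+1}(\hat K)$, and it also covers $i=0$: the edge functional $\hat b_{0,k+1}=\frac{2k+1}{4}\int_{-1}^1[\hat f_t(1,t)+\hat f_t(-1,t)]l_k\,dt$ kills $P^{k}$ (its surviving factor is $\int_{-1}^1 t^{b-1}l_k\,dt$ with $b-1\leq k-1$) but not $P^{k+1}$ (the monomial $t^{k+1}$ survives), which is precisely why $i=0$ must be excluded from item 4.

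The main obstacle I anticipate is bookkeeping rather than a single hard estimate: one must pair, for every index pattern, the correct continuity space with the correct annihilated polynomial degree, and handle consistently the two genuinely different representations — interior double integrals versus edge integrals of a first derivative. The edge case is where continuity is least automatic, forcing the use of the $H^{k+1}\hookrightarrow C^1$ embedding, and it is exactly the case that draws the sharp line between items 3 and 4.
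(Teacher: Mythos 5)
Your proposal is correct, but for items 2--4 it runs in essentially the opposite direction from the paper's proof. Your item 1 matches the paper's (terse) argument: integrate by parts to re-express $\hat b_{i,j}$ through values of $\hat f$ at corners, on edges, and in the interior. For items 2--4, however, the paper's primary tool is explicit integration by parts rather than Bramble--Hilbert: since the Rodrigues-form antiderivatives of Legendre polynomials vanish at $\pm 1$, all boundary terms drop when derivatives are moved onto $\hat f$, giving directly $|\hat b_{i,j}|\leq C_k\iint_{\hat K}|\partial_s^i\partial_t^j\hat f|\,dsdt$ for item 2 (both variables), and $|\hat b_{i,k+1}|\leq C_k\iint_{\hat K}|\partial_s\partial_t^{k}\hat f|\,dsdt$, respectively $C_k\iint_{\hat K}|\partial_s\partial_t^{k+1}\hat f|\,dsdt$, for items 3 and 4 (integrating by parts in $t$ only), followed by Cauchy--Schwarz; the paper invokes Bramble--Hilbert exactly once, for the edge coefficient $\hat b_{0,k+1}$ in item 3. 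You invert this: the annihilation-counting Bramble--Hilbert argument is your main engine for every coefficient, and the paper's explicit representation $\hat b_{i,j}=C\iint_{\hat K}\partial_s^i\partial_t^j\hat f\,(s^2-1)^{i-1}(t^2-1)^{j-1}\,dsdt$ appears only as your ``equivalently'' aside. One point needed to make your route airtight: the Bramble--Hilbert lemma you use must be the classical $P^m$ version (functional continuous on $H^{m+1}$, vanishing on polynomials of total degree $\leq m$, bound by the full seminorm $|\cdot|_{m+1}$), not Theorem \ref{bh-lemma} of this paper, which requires vanishing on all of $Q^k$. The distinction is real: $\hat b_{1,1}$ equals $1$ on the $Q^1$ function $st$, so it vanishes on $P^1$ but not on $Q^1$, and the paper's stated lemma is inapplicable to it. Since the $P^m$ version is standard (Theorem 4.1.3 of Ciarlet, already cited here), this is a citation issue, not a gap. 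As for what each approach buys: yours is uniform across all index patterns, cleanly separates continuity from annihilation, and your $t^{k+1}$ counterexample showing why $i=0$ must be excluded from item 4 is a sharpness observation the paper does not contain; the paper's computation is more elementary and self-contained, and even for $\hat b_{0,k+1}$ it needs only the $C^0$ bound $|\hat b_{0,k+1}|\leq C_k\|\hat f\|_{0,\infty,\hat K}$ inherited from item 1, where you invoke the stronger $C^1$ embedding $\|\hat f\|_{1,\infty,\hat K}\leq C\|\hat f\|_{k+1,2,\hat K}$.
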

\begin{proof}
 First of all, similar to the one-dimensional case, through integration by parts, $\hat b_{i,j}$ can be represented as integrals of $\hat f$ thus
 $|\hat b_{i,j}|\leq C_k \|\hat f\|_{0,\infty, \hat K}$ for $i,j\leq k$.
 
By the fact that the antiderivatives (and higher order ones) of Legendre polynomials vanish at $\pm1$, after integration by parts for both variables, we have
$$|\hat b_{i,j}|\leq C_k \iint_{\hat K} |\partial_s^i\partial_t^j\hat f| ds dt\leq C_k|\hat f|_{i+j,2,\hat K}, \quad \forall i,j\geq 1, i+j\leq k+1.$$

For the third estimate,  by integration by parts only for the variable $t$, we get 
$$\forall i\geq 1, |\hat b_{i,k+1}|\leq C_k \iint_{\hat K} |\partial_s\partial_t^{k}\hat f| ds dt\leq C_k|\hat f|_{k+1,2,\hat K}.$$
For $\hat b_{0,k+1}$, from the first estimate, we have $|\hat b_{0,k+1}|\leq C_k\|\hat f\|_{0,\infty,\hat K}\leq C_k \|\hat f\|_{k+1,2,\hat K}$ thus 
$\hat b_{0,k+1}$ can be regarded as a continuous linear form on $H^{k+1}(\hat K)$ and 
it vanishes if $\hat f \in Q^k(\hat K)$. So by the Bramble-Hilbert Lemma, 
$|\hat b_{0,k+1}|\leq C_k[\hat f]_{k+1,2,\hat K}$.

Finally, by integration by parts only for the variable $t$, we get 
$$|\hat b_{i,k+1}|\leq C_k \iint_{\hat K} |\partial_s\partial_t^{k+1}\hat f| ds dt\leq C_k|\hat f|_{k+2,2,\hat K},\quad 1\leq i\leq k+1.$$

\end{proof}

\begin{lemma}
\label{lemma-projection-remainder}
For $k\geq 2$, we have
 \begin{enumerate}
  \item 
  $|\hat R[\hat f]_{k,k}|_{0,\infty,\hat K}\leq C_k [\hat f]_{k+1,\hat K}$, $|\hat R[\hat f]_{k,k}|_{0,2, \hat K}\leq C_k [\hat f]_{k+1,\hat K}$.
\item $|\partial_s \hat R[\hat f]_{k,k}|_{0,\infty, \hat K}\leq C_k [\hat f]_{k+1,\hat K}$, $|\partial_s \hat R[\hat f]_{k,k}|_{0,2, \hat K}\leq C_k [\hat f]_{k+1,\hat K}$.
\item $\iint_{\hat K}\partial_s \hat R[\hat f]_{k,k} ds dt=0$
 \end{enumerate}
\end{lemma}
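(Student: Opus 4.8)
The plan is to derive all three statements from a single $H^{k+1}$ estimate on the remainder, namely $\|\hat R[\hat f]_{k,k}\|_{k+1,\hat K}\le C_k[\hat f]_{k+1,\hat K}$, obtained by viewing the $Q^k$ M-type projection $\Pi:\hat f\mapsto \hat f_{k,k}$ as an operator to which Theorem~\ref{bh-lemma} applies. To invoke that lemma I must verify its two hypotheses. First, $\Pi$ reproduces $Q^k$: for $v\in Q^k(\hat K)$ the family $\{M_i(s)M_j(t)\}_{i,j\le k}$ is a basis of $Q^k(\hat K)$, so by uniqueness of the M-type expansion all coefficients $\hat b_{i,j}$ with $i>k$ or $j>k$ vanish and $\hat f_{k,k}=v$ (this also follows at once from Theorem~\ref{plp-projection-theorem}). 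Second, $\Pi$ is a continuous linear map of $H^{k+1}(\hat K)$ into itself. Here I use Lemma~\ref{lemma-bij}(1), $|\hat b_{i,j}|\le C_k\|\hat f\|_{0,\infty,\hat K}$ for $i,j\le k$, together with the embedding $H^{k+1}(\hat K)\hookrightarrow C^0(\hat K)$ (valid since $k\ge 2$) to bound every coefficient of the finite sum $\hat f_{k,k}$ by $\|\hat f\|_{k+1,\hat K}$; since $\hat f_{k,k}$ lives in the finite-dimensional space $Q^k(\hat K)$, all norms there are equivalent and hence $\|\hat f_{k,k}\|_{k+1,\hat K}\le C_k\|\hat f\|_{k+1,\hat K}$. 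Theorem~\ref{bh-lemma} then yields $\|\hat f-\hat f_{k,k}\|_{k+1,\hat K}\le C_k[\hat f]_{k+1,\hat K}$.

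With this estimate in hand, parts 1 and 2 reduce to Sobolev embeddings applied to $\hat R[\hat f]_{k,k}\in H^{k+1}(\hat K)$. For part 1, $|\hat R[\hat f]_{k,k}|_{0,2,\hat K}\le \|\hat R[\hat f]_{k,k}\|_{k+1,\hat K}$ directly, while $|\hat R[\hat f]_{k,k}|_{0,\infty,\hat K}\le C\|\hat R[\hat f]_{k,k}\|_{k+1,\hat K}$ by $H^{k+1}\hookrightarrow C^0$; both are then bounded by $C_k[\hat f]_{k+1,\hat K}$. For part 2, $|\partial_s\hat R[\hat f]_{k,k}|_{0,2,\hat K}\le \|\hat R[\hat f]_{k,k}\|_{k+1,\hat K}$, whereas $|\partial_s\hat R[\hat f]_{k,k}|_{0,\infty,\hat K}\le \|\hat R[\hat f]_{k,k}\|_{1,\infty,\hat K}\le C\|\hat R[\hat f]_{k,k}\|_{k+1,\hat K}$ by the embedding $\|\hat g\|_{1,\infty,\hat K}\le C\|\hat g\|_{k+1,\hat K}$ recorded for $k\ge 2$ in the preliminaries. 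Again both are controlled by $C_k[\hat f]_{k+1,\hat K}$.

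Part 3 I would prove separately and directly. Writing the area integral as a boundary flux, $\iint_{\hat K}\partial_s\hat R[\hat f]_{k,k}\,ds\,dt=\int_{-1}^1\left[\hat R[\hat f]_{k,k}(1,t)-\hat R[\hat f]_{k,k}(-1,t)\right]dt$. By the edge-orthogonality property of the projection (property 2 of Theorem~\ref{plp-projection-theorem}), $\hat R[\hat f]_{k,k}$ is orthogonal to $P^{k-2}(t)$ on each of the edges $s=\pm1$; since $k\ge 2$, the constants lie in $P^{k-2}$, so $\int_{-1}^1\hat R[\hat f]_{k,k}(\pm1,t)\,dt=0$ and the difference vanishes. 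Equivalently, one may argue term by term: $\int_{-1}^1 M_i'(s)\,ds=M_i(1)-M_i(-1)$ is nonzero only for $i=1$, and $\int_{-1}^1 M_j(t)\,dt$ is nonzero only for $j\in\{0,2\}$, and every such pair satisfies $i,j\le k$, hence never appears in the remainder sum.

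The main obstacle is the continuity of $\Pi$ on $H^{k+1}(\hat K)$ established in the first paragraph. Because the M-type projection is assembled from corner point values and from edge and area integrals of derivatives of $\hat f$, its boundedness is not automatic; it rests precisely on the coefficient bounds of Lemma~\ref{lemma-bij}(1) and on the Sobolev embeddings that render point evaluation and first-derivative evaluation continuous, which is exactly where the hypothesis $k\ge 2$ enters. Once continuity and $Q^k$-reproduction are secured, Bramble-Hilbert and the embeddings make the remaining estimates routine.
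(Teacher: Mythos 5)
Your proposal is correct, and it reaches the conclusion by a slightly different use of the same machinery. Where the paper invokes the linear-functional form of Theorem~\ref{bh-lemma} pointwise --- treating, for each fixed $(s,t)$, the evaluations $\hat f\mapsto \hat R[\hat f]_{k,k}(s,t)$ and $\hat f\mapsto \partial_s\hat R[\hat f]_{k,k}(s,t)$ as continuous linear forms on $H^{k+1}(\hat K)$ with operator norm uniform in $(s,t)$, vanishing on $Q^k(\hat K)$ --- you instead apply the operator form \eqref{bh1} once to the projection $\Pi:\hat f\mapsto\hat f_{k,k}$, obtaining the single estimate $\|\hat R[\hat f]_{k,k}\|_{k+1,\hat K}\leq C_k[\hat f]_{k+1,\hat K}$, and then harvest all four bounds of parts 1 and 2 from the Sobolev embeddings $\|\cdot\|_{0,\infty,\hat K}\leq C\|\cdot\|_{k+1,\hat K}$ and $\|\cdot\|_{1,\infty,\hat K}\leq C\|\cdot\|_{k+1,\hat K}$ recorded in the preliminaries. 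Both routes rest on the same input: Lemma~\ref{lemma-bij}(1) to bound the coefficients $\hat b_{i,j}$, $i,j\leq k$, hence the operator norm of $\Pi$ (your finite-dimensionality argument for $\|\hat f_{k,k}\|_{k+1,\hat K}\leq C_k\max_{i,j}|\hat b_{i,j}|$ is sound, and in fact the triangle inequality over the finitely many basis terms suffices without invoking norm equivalence). Your approach is more economical --- one application of Bramble--Hilbert covers parts 1 and 2 simultaneously and yields the stronger full $H^{k+1}$ control of the remainder --- while the paper's pointwise-functional version produces the $L^\infty$ bounds directly with constants manifestly independent of the evaluation point, at the cost of repeating the argument for $\hat R$ and $\partial_s\hat R$. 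Your part 3 coincides with the paper's: the term-by-term orthogonality argument ($M_i'\perp 1$ for $i\geq 2$, $M_j\perp 1$ for $j\geq 3$) is the paper's primary proof, and the boundary-flux argument via edge orthogonality in Theorem~\ref{plp-projection-theorem} (using $1\in P^{k-2}$ for $k\geq 2$) is exactly the paper's stated alternative.
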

\begin{proof} 
 Lemma \ref{lemma-bij} implies $\|\hat f_{k,k}\|_{0,\infty, \hat K}\leq C_k \|\hat f\|_{0,\infty, \hat K}$
 and $\|\partial_s \hat  f_{k,k}\|_{0,\infty, \hat K}\leq C_k \|\hat f\|_{0,\infty, \hat K}$.
 Thus $$\forall (s,t)\in \hat K, |\hat R[\hat f]_{k,k}(s,t)|\leq |\hat f_{k,k}(s,t)|+|\hat f(s,t)|\leq C_k \|\hat f\|_{0,\infty, \hat K}\leq C_k \|\hat f\|_{k+1, \hat K}.$$
 Notice that here $C_k$ does not depend on $(s,t)$. 
So $R[\hat f]_{k,k}(s,t)$ is a continuous linear form on $H^{k+1}(\hat K)$ and its operator norm is bounded by a constant independent of $(s,t)$. 
Since it vanishes for any $\hat f\in Q^{k}(\hat K)$, by the Bramble-Hilbert Lemma, we get $|R[\hat f]_{k,k}(s,t)|\leq  C_k [\hat f]_{k+1,\hat K}$ where $C_k$ does not depend on $(s,t)$. So 
the $L^\infty$ estimate holds and it implies the $L^2$ estimate.

The second estimate can be established similarly since we have
$$ |\partial_s \hat R[\hat f]_{k,k}(s,t)|\leq |\partial_s \hat f_{k,k}(s,t)|+| \partial_s \hat f(s,t)|\leq C_k \|\hat f\|_{1,\infty, \hat K}\leq C_k \|\hat f\|_{k+1, \hat K}.$$

The third equation is implied by the fact that $M_j(t)\perp 1$ for $j\geq 3$ and $M'_j(t)\perp 1$ for $j\geq 2$. Another way to prove the third equation is to use integration by parts 
$$\iint_{\hat K}\partial_s \hat R[\hat f]_{k+1,k+1} ds dt=\int_{-1}^1 \left(\hat R[\hat f]_{k+1,k+1}(1,t)-\hat R[\hat f]_{k+1,k+1}(-1,t)\right)dt,$$ which is zero the second property in Theorem \ref{plp-projection-theorem}.
 \end{proof}

 For the discussion in the next few subsections, it is useful to consider the 
 lower order part of the remainder of $\hat R[\hat f]_{k,k}$:
 \begin{lemma}
 \label{lemma-remainder-highlow}
 For  $\hat f\in H^{k+2}(\hat K)$ with $k\geq 2$, 
define $\hat R[\hat f]_{k+1,k+1}-\hat R[\hat f]_{k,k}=\hat R_1+\hat R_2$ with
 \begin{align}
 \label{residue-splitting}
 \begin{split}
  \hat R_1&=\sum_{i=0}^k \hat b_{i,k+1}M_{i}(s)M_{k+1}(t),\\
  \hat R_2&=\sum_{j=0}^{k+1}\hat  b_{k+1,j}M_{k+1}(s)M_j(t) =M_{k+1}(s)\hat b_{k+1}(t),\quad 
 \hat b_{k+1}(t)= \sum_{j=0}^{k+1}\hat b_{k+1,j}M_j(t).
 \end{split}
 \end{align}
They have the following properties:
\begin{enumerate}
\item $\iint_{\hat K} \partial_s \hat R_1 ds dt=0$. 
 \item $|\partial_s \hat R_1|_{0,\infty,\hat K}\leq C_k|\hat f |_{k+2,2,\hat K}$, $|\partial_s \hat R_1|_{0,2,\hat K}\leq C_k|\hat f |_{k+2,2,\hat K}.$
 \item $|\hat b_{k+1}(t)|\leq C_k |\hat f|_{k+1,\hat K}$, 
 $|\hat b'_{k+1}(t)|\leq C_k |\hat f|_{k+2,\hat K}$, $\forall t\in [-1,1]$.
\end{enumerate}
 \end{lemma}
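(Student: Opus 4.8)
The plan is to read off all three claims directly from the M-type coefficient bounds of Lemma~\ref{lemma-bij}, exploiting that $\hat R_1$ and $\hat b_{k+1}(t)$ are finite linear combinations of the fixed one-dimensional polynomials $M_i$, whose sup-norms on $[-1,1]$ are constants absorbed into $C_k$. The organizing observation, which I would state up front, is that $M_0\equiv 1$ and hence $M_0'\equiv 0$: differentiating in $s$ annihilates the index-zero term, and it is exactly this that upgrades the available coefficient estimate from order $|\hat f|_{k+1}$ to the sharper order $|\hat f|_{k+2}$ needed in items (1) and (2).

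For item (1), I would differentiate termwise, $\partial_s\hat R_1=\sum_{i=0}^k \hat b_{i,k+1}M_i'(s)M_{k+1}(t)$, and factor the integral over $\hat K$ as a product of one-dimensional integrals, $\iint_{\hat K}\partial_s\hat R_1\,ds\,dt=\sum_{i=0}^k \hat b_{i,k+1}\bigl(\int_{-1}^1 M_i'(s)\,ds\bigr)\bigl(\int_{-1}^1 M_{k+1}(t)\,dt\bigr)$. The $t$-factor $\int_{-1}^1 M_{k+1}(t)\,dt$ vanishes because $M_{k+1}\perp 1$ for $k+1\ge 3$ (the orthogonality of M-type polynomials recorded above, also used in Lemma~\ref{lemma-projection-remainder}), so the whole integral is zero irrespective of the coefficients $\hat b_{i,k+1}$.

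For item (2), since $M_0'\equiv 0$ the sum effectively runs from $i=1$, and a pointwise estimate gives $|\partial_s\hat R_1(s,t)|\le C_k\sum_{i=1}^k|\hat b_{i,k+1}|$ using $\max_{[-1,1]}|M_i'|$ and $\max_{[-1,1]}|M_{k+1}|$. Each coefficient with $1\le i\le k$ is controlled by the fourth estimate of Lemma~\ref{lemma-bij}, $|\hat b_{i,k+1}|\le C_k|\hat f|_{k+2,2,\hat K}$, which yields the $L^\infty$ bound; the $L^2$ bound follows by multiplying by $|\hat K|^{1/2}$, exactly as in the proof of Lemma~\ref{lemma-projection-remainder}. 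For item (3), I would bound $\hat b_{k+1}(t)=\sum_{j=0}^{k+1}\hat b_{k+1,j}M_j(t)$ pointwise by $C_k\sum_{j=0}^{k+1}|\hat b_{k+1,j}|$ and invoke the $s$-mirror of the third estimate of Lemma~\ref{lemma-bij} (obtained by integrating by parts in $s$ rather than $t$), namely $|\hat b_{k+1,j}|\le C_k|\hat f|_{k+1,2,\hat K}$ for $0\le j\le k+1$, giving the first inequality. For $\hat b_{k+1}'(t)$ the $j=0$ term again drops since $M_0'\equiv 0$, so only $1\le j\le k+1$ survive, and the $s$-mirror of the fourth estimate of Lemma~\ref{lemma-bij}, $|\hat b_{k+1,j}|\le C_k|\hat f|_{k+2,2,\hat K}$, delivers the second.

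The only point that requires genuine care, and the sole place such a proof could go wrong, is the index bookkeeping: the sharper order-$(k+2)$ coefficient bound is available only for indices $\ge 1$, so in both (2) and (3) one must first use $M_0'\equiv 0$ to discard the boundary index before applying it; skipping this step would silently lose one Sobolev order. Aside from this, the argument is purely the uniform boundedness on $\hat K$ of the finitely many fixed basis polynomials together with Lemma~\ref{lemma-bij}, so I do not anticipate any substantial difficulty.
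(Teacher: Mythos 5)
Your proposal is correct and follows essentially the same route as the paper's proof: item (1) from $M_{k+1}\perp 1$ for $k\geq 2$, item (2) from $M_0'\equiv 0$ together with the fourth estimate of Lemma~\ref{lemma-bij}, and item (3) from the $s$--$t$ mirrored versions of the third and fourth estimates of Lemma~\ref{lemma-bij}. Your explicit flagging of the mirror-symmetry step and the index bookkeeping ($i\geq 1$ before invoking the order-$(k+2)$ bound) is exactly the care the paper's proof takes, just stated more verbosely.
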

\begin{proof}
The first equation is due to the fact that $M_{k+1}(t)\perp 1$ since $k\geq 2.$

Notice that $M'_0(s)=0$, by Lemma \ref{lemma-bij}, we have
$$|\partial_s \hat R_1(s,t)|=\left|\sum_{i=1}^k \hat b_{i,k+1}M_{i}'(s)M_{k+1}(t)\right|\leq C_k|\hat f|_{k+2,\hat K}.$$ So we get the $L^\infty$ estimate for
$|\partial_s \hat R_1(s,t)|$ thus the $L^2$ estimate. 
 
 Similar to the estimates in Lemma \ref{lemma-bij}, we can show 
 $|\hat  b_{k+1,j}|\leq C_k |\hat f|_{k+1,\hat K}$ for $j\leq k+1$, thus $|b_{k+1}(t)|\leq C_k |\hat f|_{k+1,\hat K}$. 
  Since $b_{k+1}'(t)= \sum\limits_{j=1}^{k+1}\hat b_{k+1,j}M_j'(t)$, by the last 
 estimate in Lemma \ref{lemma-bij}, we get $|\hat b'_{k+1}(t)|\leq C_k |\hat f|_{k+2,\hat K}$.
\end{proof}

\subsection{The $C^0$-$Q^k$ projection}

Now consider a function $u(x,y)\in H^{k+2}(\Omega)$, let $u_p(x,y)$ denote its piecewise $Q^k$ M-type projection on each element $e$ in the mesh $\Omega_h$. 
The first two properties in Theorem \ref{plp-projection-theorem} imply that $u_p(x,y)$ on each edge is uniquely determined by $u(x,y)$ along that edge. Thus 
 $u_p(x,y)$ is continuous on $\Omega_h$.  The approximation error $u-u_p$ is one order higher at all Gauss-Lobatto points $Z_0$:
\begin{thm}
\label{thm-superapproximation}
 \[\|u-u_p\|_{2,Z_0}=\mathcal O(h^{k+2}) \|u\|_{k+2},\quad\forall u\in H^{k+2}(\Omega).\]
 \[\|u-u_p\|_{\infty,Z_0}=\mathcal O(h^{k+2}) \|u\|_{k+2,\infty},
 \quad\forall u\in W^{k+2,\infty}(\Omega).\]
\end{thm}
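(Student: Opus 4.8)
The plan is to reduce the statement to a pointwise estimate of the remainder $R[u]_{k,k}=u-u_p$ at a single Gauss-Lobatto node, transfer the computation to the reference cell, and exploit a cancellation that is special to the Gauss-Lobatto abscissae. Fix a cell $e$ and a node of $Z_{0,e}$; under the affine map $(x,y)\mapsto(s,t)$ its image on $\hat K$ is a point $(s_\alpha,t_\beta)$ in which both $s_\alpha$ and $t_\beta$ are roots of $M_{k+1}$, because the roots of $M_{k+1}(t)$ are exactly the $(k+1)$-point Gauss-Lobatto points on $[-1,1]$. Since function values are preserved under the affine map, $(u-u_p)(x,y)=\hat R[\hat u]_{k,k}(s_\alpha,t_\beta)$, so it suffices to estimate the reference remainder at such points.

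The key observation is that the leading $\mathcal O(h^{k+1})$ layer of the remainder vanishes at these nodes. Writing $\hat R[\hat u]_{k,k}=\hat R[\hat u]_{k+1,k+1}-(\hat R_1+\hat R_2)$ with the decomposition of Lemma \ref{lemma-remainder-highlow}, both correction terms carry a factor $M_{k+1}$: the term $\hat R_1=\sum_{i=0}^k \hat b_{i,k+1}M_i(s)M_{k+1}(t)$ contains $M_{k+1}(t)$, while $\hat R_2=M_{k+1}(s)\hat b_{k+1}(t)$ contains $M_{k+1}(s)$. Evaluated at $(s_\alpha,t_\beta)$ both factors are zero, hence $\hat R[\hat u]_{k,k}(s_\alpha,t_\beta)=\hat R[\hat u]_{k+1,k+1}(s_\alpha,t_\beta)$. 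Thus at the Gauss-Lobatto points the $Q^k$ M-type remainder agrees with the $Q^{k+1}$ remainder, which is one order smaller.

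It then remains to bound the $Q^{k+1}$ remainder. Applying the $L^\infty$ estimate of Lemma \ref{lemma-projection-remainder} with $k$ replaced by $k+1$ (legitimate since $u\in H^{k+2}$) gives $|\hat R[\hat u]_{k+1,k+1}(s_\alpha,t_\beta)|\leq |\hat R[\hat u]_{k+1,k+1}|_{0,\infty,\hat K}\leq C[\hat u]_{k+2,\hat K}$, so $|(u-u_p)(x,y)|\leq C[\hat u]_{k+2,\hat K}$ at every node of $Z_{0,e}$. The scaling \eqref{scaling} with $p=2$, $n=2$ turns this into the cellwise bound $|(u-u_p)(x,y)|\leq C h^{k+1}[u]_{k+2,e}$. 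Summing the squares over the $(k+1)^2$ nodes of each cell and over all cells, and weighting by $h^2$ as in $\|\cdot\|_{2,Z_0}$, yields $\|u-u_p\|_{2,Z_0}^2\leq C h^{2k+4}\sum_e[u]_{k+2,e}^2\leq C h^{2k+4}\|u\|_{k+2}^2$, giving the first estimate; nodes shared by two cells contribute the same value by continuity of $u_p$ and cost only a harmless constant. For the maximum-norm estimate I would instead insert $[u]_{k+2,e}\leq C h^{n/2}[u]_{k+2,\infty,e}$ (with $n=2$) into the pointwise bound to obtain $|(u-u_p)(x,y)|\leq C h^{k+2}[u]_{k+2,\infty,e}\leq C h^{k+2}\|u\|_{k+2,\infty}$, and then take the maximum over $Z_0$.

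The main obstacle, and genuinely the entire content of the theorem, is identifying and justifying the cancellation in the second paragraph: the extra order of accuracy is not a generic smoothness gain but a direct consequence of evaluating precisely at the roots of $M_{k+1}$, so that the whole $\mathcal O(h^{k+1})$ layer $\hat R_1+\hat R_2$ of the remainder drops out and only the $Q^{k+1}$ remainder survives. Everything after this cancellation is routine scaling and summation.
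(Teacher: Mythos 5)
Your proposal is correct and takes essentially the same approach as the paper: the key cancellation---that the difference between the $Q^k$ and $Q^{k+1}$ M-type remainders carries factors of $M_{k+1}(s)$ or $M_{k+1}(t)$ and hence vanishes at the Gauss--Lobatto nodes---followed by the $L^\infty$ bound $|\hat R[\hat u]_{k+1,k+1}|_{0,\infty,\hat K}\leq C[\hat u]_{k+2,\hat K}$ from Lemma \ref{lemma-projection-remainder}, the scaling \eqref{scaling}, and summation over cells, is exactly the paper's proof. The only difference is presentational: you justify the cancellation by explicitly invoking the decomposition $\hat R_1+\hat R_2$ of Lemma \ref{lemma-remainder-highlow}, whereas the paper asserts the vanishing of $\hat R_{k+1,k+1}[\hat u]-\hat R_{k,k}[\hat u]$ at the nodes directly.
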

\begin{proof}
Consider any $e$ with cell center $(x_e, y_e)$, define $\hat u(s,t)=u(x_e+sh, y_e+th)$. 
 Since the $(k+1)$ Gauss-Lobatto points are roots of $M_{k+1}(t)$, 
 $\hat R_{k+1,k+1}[\hat u]-\hat R_{k,k}[\hat u]$ vanishes at $(k+1)\times(k+1)$ Gauss-Lobatto points on $\hat K$. 
 By Lemma \ref{lemma-projection-remainder}, we have $|\hat R_{k+1,k+1}[\hat u](s,t)|\leq C [\hat u]_{k+2,\hat K}$.
 
 Mapping back to the cell $e$, with \eqref{scaling}, at the $(k+1)\times(k+1)$ Gauss-Lobatto points on $e$, $|u-u_p|\leq C h^{k+2-\frac{n}{2}}[u]_{k+2, e}$. Summing over all elements $e$, we get
 \[\|u-u_p\|_{2, Z_0}\leq C \left [h^n\sum_e h^{2k+4-n}[u]_{k+2, e}^2\right ]^{\frac12}=\mathcal O (h^{k+2})[u]_{k+2,\Omega}. \]
 
 If further assuming $u\in W^{k+2,\infty}(\Omega)$, then 
 at the $(k+1)\times(k+1)$ Gauss-Lobatto points on $e$, $|u-u_p|\leq C h^{k+2-\frac{n}{2}}[u]_{k+2, e}\leq C h^{k+2} [u]_{k+2,\infty,\Omega}$, which implies 
 the second estimate. 
\end{proof}

\subsection{Superconvergence of bilinear forms}
\label{sec-bilinearform}

For convenience, in this subsection, we drop the subscript $h$ in a test function $v_h\in V^h$. When there is no confusion, we may also drop $dxdy$ or $dsdt$ in a double integral. 

\begin{lemma}
\label{lemma-bilinear-laplacian}
Assume $a(x,y)\in W^{2,\infty}(\Omega).$ For $k\geq 2$, 
 \[
  \iint_{\Omega}a(u-u_p)_xv_x \,dxdy=\mathcal O(h^{k+2})\|u\|_{k+2}\|v\|_2,\quad \forall v\in V^h.
\]
\end{lemma}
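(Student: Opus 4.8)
The plan is to localize the integral to the reference cell and then exploit the orthogonality built into the M-type remainder. On a cell $e$, writing $\hat u(s,t)=u(x_e+sh,y_e+th)$ and using $\partial_x=\tfrac1h\partial_s$ together with $dxdy=h^2\,dsdt$, the two factors of $h^{-1}$ cancel against $h^2$, so that $\iint_e a(u-u_p)_x v_x\,dxdy=\iint_{\hat K}\hat a\,\partial_s\hat R[\hat u]_{k,k}\,\partial_s\hat v\,dsdt$, where $u-u_p=R[u]_{k,k}$ on $e$ by definition of $u_p$. A direct Cauchy--Schwarz bound only gives $\mathcal O(h^{k})$, since $\|\partial_s\hat R[\hat u]_{k,k}\|_{0,2}\le C[\hat u]_{k+1,\hat K}=\mathcal O(h^{k})[u]_{k+1,e}$ by Lemma~\ref{lemma-projection-remainder} while $|\hat v|_{1,\hat K}=|v|_{1,e}$ carries no power of $h$; I therefore need to recover two extra powers of $h$. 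Two mechanisms supply them: first, subtracting from $\hat a$ a low-degree polynomial, whose error is $\mathcal O(h)$ (cell average) or $\mathcal O(h^2)$ (once $a\in W^{2,\infty}$ is used) on $\hat K$, combined with the orthogonality of the remainder against $P^{k-1}$; and second, integration by parts in $s$ or $t$ that moves a derivative onto $\hat v$, trading $\partial_s\hat v$ for a second derivative and gaining one power of $h$ through the scaling $|\hat v|_{2,\hat K}=h|v|_{2,e}$ in \eqref{scaling}. This last point is exactly why the statement controls $\|v\|_2$ rather than $\|v\|_1$.

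The backbone of the argument is the splitting $\hat R[\hat u]_{k,k}=\hat R[\hat u]_{k+1,k+1}-\hat R_1-\hat R_2$ from \eqref{residue-splitting}, which isolates one genuinely higher-order piece from two correction terms. For the $\hat R[\hat u]_{k+1,k+1}$ contribution I would write $\hat a=\bar a+(\hat a-\bar a)$ with $\bar a$ the cell average; since $\partial_s\hat v$ has degree $\le k-1$ in $s$ and $\le k$ in $t$ while the $Q^{k+1}$ remainder carries only modes $M_i(s)M_j(t)$ with $i\ge k+2$ or $j\ge k+2$, the Legendre and $M$-orthogonalities force $\bar a\iint_{\hat K}\partial_s\hat R[\hat u]_{k+1,k+1}\,\partial_s\hat v=0$, and what remains is bounded by $\|\hat a-\bar a\|_{0,\infty}\|\partial_s\hat R[\hat u]_{k+1,k+1}\|_{0,2}\,|\hat v|_{1,\hat K}\le Ch\cdot h^{k+1}|u|_{k+2,e}\,|v|_{1,e}$. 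For the term $\hat R_1=\sum_{i\le k}\hat b_{i,k+1}M_i(s)M_{k+1}(t)$, Lemma~\ref{lemma-bij}(4) already gives $\|\partial_s\hat R_1\|_{0,2}\le C|\hat u|_{k+2,\hat K}=\mathcal O(h^{k+1})|u|_{k+2,e}$, so only one further order is needed; I would integrate by parts in $t$, using that $M_{k+1}\perp1$ so that its antiderivative vanishes at $\pm1$, to replace $\partial_s\hat v$ by $\partial_s\partial_t\hat v$ and close with $|\hat v|_{2,\hat K}=h|v|_{2,e}$.

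The delicate term is $\hat R_2=M_{k+1}(s)\,\hat b_{k+1}(t)$, whose $s$-derivative $l_k(s)\hat b_{k+1}(t)$ is only $\mathcal O(h^{k})|u|_{k+1,e}$ by Lemma~\ref{lemma-remainder-highlow}, so two orders must be recovered and this is the main obstacle. My plan is to integrate by parts in $s$ against $l_k=M_{k+1}'$ with $M_{k+1}(\pm1)=0$, producing $-\iint_{\hat K}(\partial_s\hat a\,\hat b_{k+1}\,\partial_s\hat v+\hat a\,\hat b_{k+1}\,\partial_s^2\hat v)M_{k+1}(s)$. In the second summand $\partial_s^2\hat v$ has $s$-degree $\le k-2$, so after removing the cell average of $\hat a$ the constant part is annihilated by $M_{k+1}\perp P^{k-1}$ while the $\mathcal O(h)$ remainder pairs with $|\hat v|_{2,\hat K}$ to give $\mathcal O(h^{k+2})$. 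The first summand carries $\partial_s\hat a=\mathcal O(h)$; here I would split $\hat b_{k+1}(t)$ into its $t$-average plus oscillation, the oscillation being $\mathcal O(h^{k+1})|u|_{k+2,e}$ because $|\hat b_{k+1}'|\le C|\hat u|_{k+2,\hat K}$ in Lemma~\ref{lemma-remainder-highlow}, and for the average part I use $M_{k+1}\perp P^{k-1}$ together with a Taylor expansion of $\partial_s\hat a$ in $s$ whose remainder is $\mathcal O(h^2|a|_{2,\infty})$ — precisely where the hypothesis $a\in W^{2,\infty}(\Omega)$ is consumed. Summing the per-cell estimates and applying the Cauchy--Schwarz inequality over $e$ then yields $\mathcal O(h^{k+2})\|u\|_{k+2}\|v\|_2$.
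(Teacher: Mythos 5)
Your overall architecture --- reduce to the reference cell, split $\hat R[\hat u]_{k,k}$ into the high-order piece $\hat R[\hat u]_{k+1,k+1}$ plus the corrections $\hat R_1,\hat R_2$ of \eqref{residue-splitting}, and recover the two missing powers of $h$ from coefficient oscillation and from trading $\partial_s\hat v$ for second derivatives of $\hat v$ --- is the same as the paper's, and your handling of $\hat R[\hat u]_{k+1,k+1}$ (exact orthogonality of $\partial_s\hat R[\hat u]_{k+1,k+1}$ against $\partial_s\hat v$, then $\hat a-\bar a=\mathcal O(h)$) and of $\hat R_1$ (integration by parts in $t$ against the antiderivative of $M_{k+1}(t)$) are correct, though slightly different in flavor from the paper's average-subtraction-plus-Poincar\'e argument.

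There is, however, a genuine gap in your treatment of the hardest term $\hat R_2=M_{k+1}(s)\hat b_{k+1}(t)$. The orthogonality you invoke, ``$M_{k+1}\perp P^{k-1}$,'' is false: since $M_{k+1}=\frac{1}{2k+1}(l_{k+1}-l_{k-1})$, one only has $M_{k+1}\perp P^{k-2}$, while $\int_{-1}^1 M_{k+1}(s)\,l_{k-1}(s)\,ds=-\frac{2}{(2k+1)(2k-1)}\neq 0$; the paper's own listed property is precisely $R[f]_k\perp P^{k-2}$. This slip is harmless in your second summand, where $\partial_s^2\hat v$ has $s$-degree at most $k-2$, but it is fatal in the average part of your first summand: there you must annihilate $\bar b\iint_{\hat K}\partial_s\hat a(0,t)\,\partial_s\hat v\,M_{k+1}(s)\,ds\,dt$, and $\partial_s\hat v$ has $s$-degree up to $k-1$, so the inner integral equals a nonzero multiple of the $l_{k-1}$-coefficient of $\partial_s\hat v$ and the term survives at size $\mathcal O(h)\cdot\mathcal O(h^k)=\mathcal O(h^{k+1})$ per cell --- one order short, with no cancellation across cells claimed or available. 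The repair is what the paper does at exactly this spot: instead of Taylor-expanding $\partial_s\hat a$ and hoping for orthogonality, integrate by parts in $s$ a second time using the antiderivative $N$ of $M_{k+1}$ (which satisfies $N(\pm1)=0$ because $M_{k+1}\perp 1$ for $k\geq 2$), turning $-\iint_{\hat K}\hat b_{k+1}(t)M_{k+1}(s)\,\hat a_s\hat v_s$ into $\iint_{\hat K}\hat b_{k+1}(t)N(s)\left(\hat a_{ss}\hat v_s+\hat a_s\hat v_{ss}\right)$, where $\hat a_{ss}=\mathcal O(h^2)$ and $\hat a_s\hat v_{ss}$ contributes $\mathcal O(h)\cdot\mathcal O(h)$ after scaling; this yields $\mathcal O(h^{k+2})$ using nothing beyond $N(\pm1)=0$.
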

\begin{proof}
For each cell $e$, we consider $\iint_{e}a(u-u_p)_xv_x \,dxdy$. 
Let $R[u]_{k,k}=u-u_p$ denote the M-type projection remainder on $e$. 
Then $R[u]_{k,k}$ can be splitted into lower order part $R[u]_{k,k}-R[u]_{k+1,k+1}$ and high order part $R[u]_{k+1,k+1}$. 
\[\iint_{e} a(u-u_p)_xv_x \,dxdy=\iint_{e} a(R[u]_{k+1,k+1})_xv_x \,+\iint_{e} a(R[u]_{k,k}-R[u]_{k+1,k+1})_xv_x.\]
We first consider the high order part. Mapping everything to the reference cell $\hat K$ and let $\overline{\hat a\hat v_s}$ denote the average of $\hat a\hat v_s$ on $\hat K$.  
By the last property in Lemma \ref{lemma-projection-remainder}, we get
\begin{align*}
&h^{2-n}\left|\iint_{e} a(R[u]_{k+1,k+1})_xv_x \,dxdy\right|=\left|\iint_{\hat K} \partial_s (\hat R[\hat u]_{k+1,k+1}) \hat a\hat v_s ds dt\right|\\
&=
\left| \iint_{\hat K} \partial_s (\hat R[\hat u]_{k+1,k+1}) (\overline{\hat a\hat v_s}-{\hat a\hat v_s}) ds dt\right| 
\leq |\partial_s (\hat R[\hat u]_{k+1,k+1})|_{0,2,\hat K} |\overline{\hat a\hat v_s}-{\hat a\hat v_s}|_{0,2,\hat K}. 
\end{align*}
By Poincar\'{e} inequality and Cauchy-Schwarz inequality, we have
\[|\overline{\hat a\hat v_s}-{\hat a\hat v_s}|_{0,2,\hat K}\leq C |\nabla ({\hat a\hat v_s})|_{0,2,\hat K}\leq C|\hat a|_{1,\infty,\hat K} |\hat v|_{1,2,\hat K}+
C|\hat a|_{0,\infty,\hat K} |\hat v|_{2,2,\hat K}.\]
Mapping back to the cell $e$, with \eqref{scaling}, by Lemma \ref{lemma-projection-remainder}, the higher order part is bounded by
$C h^{k+2}[u]_{k+2,2,e}(|a|_{1,\infty, e} |v|_{1,2,e}+|a|_{0,\infty, e} |v|_{2,2,e})$ thus
\begin{align*}\sum_e \iint_{e} a(R[u]_{k+1,k+1})_xv_x \,dxdy&=\mathcal O(h^{k+2})\|a\|_{1,\infty,\Omega}\sum_e \|u\|_{k+2,e}\|v\|_{2,e}\\
&=\mathcal O(h^{k+2})\|a\|_{1,\infty,\Omega} \|u\|_{k+2,\Omega}\|v\|_{2,\Omega}.\end{align*}

Now we only need to discuss 
 the lower order part of the remainder. Let $R[u]_{k,k}-R[u]_{k+1,k+1}=R_1+R_2$ which is defined similarly as in \eqref{residue-splitting}. For $R_1$, by the first two results in Lemma \ref{lemma-remainder-highlow}, we have 
\begin{align*}
\iint_{\hat K}  (\partial_s\hat R_1) \hat a\hat v_s=\iint_{\hat K} (\partial_s\hat R_1) (\hat a\hat v_s-\overline{ \hat a\hat v_s})\leq 
|\partial_s \hat R_1|_{0,2,\hat K} |\overline{\hat a\hat v_s}-{\hat a\hat v_s}|_{0,2,\hat K} \\
\leq  C |\hat u|_{k+2,2,\hat K} |\overline{\hat a\hat v_s}-{\hat a\hat v_s}|_{0,2,\hat K}. 
\end{align*}
By similar discussions above, we get 
\begin{align*}
\sum_e \iint_{e} a(R_1)_xv_x \,dxdy=
\mathcal O(h^{k+2})\|a\|_{1,\infty,\Omega} \|u\|_{k+2,\Omega}\|v\|_{2,\Omega}.
\end{align*}

For $R_2$, let $N(s)$ be the antiderivative of $M_{k+1}(s)$ then 
$N(\pm1)=0$.
Let $\bar{\hat a}$ be the average of $\bar{\hat a}$ on $\hat K$ then 
$|\hat a-\bar{\hat a}|_{0,\infty, \hat K}\leq C |\hat a|_{1,\infty,\hat K}$.
Since $M_{k+1}(s)\perp P^{k-2}(s)$, we have $\iint_{\hat K}\hat  b_{k+1}(t) M_{k+1}(s)\hat v_{ss}=0.$
After integration by parts, by Lemma \ref{lemma-remainder-highlow} we have 
\begin{align*}
 &\iint_{\hat K} (\partial_s \hat R_2) \hat a\hat v_s=
-\iint_{\hat K} \hat b_{k+1}(t)M_{k+1}(s) (\hat a_s\hat v_s+\hat a \hat v_{ss})\\
=&\iint_{\hat K} \hat b_{k+1}(t)N(s) (\hat a_{ss}\hat v_s+\hat a_{s}\hat v_{ss})-\iint_{\hat K}\hat  b_{k+1}(t)M_{k+1}(s)(\hat a-\bar{\hat a})\hat v_{ss} \\
\leq& C|\hat u|_{k+1,\hat K}(|\hat a|_{2,\infty,\hat K}|\hat v|_{1,2,\hat K}+
|\hat a|_{1,\infty,\hat K}|\hat v|_{2,2,\hat K}).
\end{align*}

Thus we can get 
\[\sum_e \iint_{e} (\partial_x R_2) a\hat v_x dx dy =
\mathcal O(h^{k+2})\|a\|_{2,\infty,\Omega} \|u\|_{k+1,\Omega}\|v\|_{2,\Omega}.\]
So we have $\iint_{\Omega}a(u-u_p)_xv_x \,dxdy=\mathcal O (h^{k+2})\|a\|_{2,\infty,\Omega} \|u\|_{k+2}\|v\|_2,\quad \forall v\in V^h.$ 
\end{proof}

\begin{lemma}
Assume $c(x,y)\in W^{1,\infty}(\Omega).$ For $k\geq 2$, 
 \[
  \iint_{\Omega}c(u-u_p)v \,dxdy=\mathcal O(h^{k+2})\|u\|_{k+1}\|v\|_1,\quad \forall v\in V^h.
\]
\end{lemma}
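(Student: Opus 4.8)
The plan is to reduce the estimate to a cell-by-cell bound on the reference cell $\hat K$ and, on each cell, to exploit the orthogonality $\hat R[\hat u]_{k,k}\perp Q^{k-2}(\hat K)$ guaranteed by property~3 of Theorem~\ref{plp-projection-theorem}. Writing $u-u_p=R[u]_{k,k}$ on a cell $e$ and passing to $\hat K$ through the scaling \eqref{scaling}, the cell contribution is $h^{n}\iint_{\hat K}\hat c\,\hat R[\hat u]_{k,k}\,\hat v\,dsdt$ with $\hat v\in Q^{k}(\hat K)$. A naive Cauchy--Schwarz bound using $|\hat R[\hat u]_{k,k}|_{0,2,\hat K}\le C[\hat u]_{k+1,\hat K}$ from Lemma~\ref{lemma-projection-remainder} together with $|\hat v|_{0,2,\hat K}=h^{-1}|v|_{0,e}$ only yields $\mathcal O(h^{k+1})$. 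Gaining the extra power of $h$ (with merely $H^{k+1}$ regularity on $u$ and $W^{1,\infty}$ on $c$) is precisely the crux, and I expect it to be the main obstacle.

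To gain that power I would first split $\hat c=\bar{\hat c}+(\hat c-\bar{\hat c})$ into its mean $\bar{\hat c}$ over $\hat K$ and the oscillatory remainder. For the oscillatory part, Poincar\'e's inequality gives $|\hat c-\bar{\hat c}|_{0,\infty,\hat K}\le C|\hat c|_{1,\infty,\hat K}$, and by \eqref{scaling} one has $|\hat c|_{1,\infty,\hat K}=h\,|c|_{1,\infty,e}$; combining this extra $h$ with $|\hat R[\hat u]_{k,k}|_{0,2,\hat K}\le C[\hat u]_{k+1,\hat K}$ and $|\hat v|_{0,2,\hat K}$ turns the $\mathcal O(h^{k+1})$ estimate into $\mathcal O(h^{k+2})|c|_{1,\infty,e}[u]_{k+1,e}|v|_{0,e}$. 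This is the only place where the $W^{1,\infty}$ hypothesis on $c$ is used, so it is genuinely needed; note also that one cannot instead apply orthogonality to the whole product $\hat c\hat v$, since subtracting a $Q^{k-2}$ approximation of $\hat c\hat v$ would force the seminorm $[\hat c\hat v]_{k-1}$ and hence $c\in W^{k-1,\infty}$.

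For the mean part I would use the orthogonality directly: since $\bar{\hat c}$ is constant and $\hat R[\hat u]_{k,k}\perp Q^{k-2}(\hat K)$, for the $L^2(\hat K)$ projection $\Pi_{k-2}\hat v\in Q^{k-2}(\hat K)$ we have $\bar{\hat c}\iint_{\hat K}\hat R[\hat u]_{k,k}\hat v=\bar{\hat c}\iint_{\hat K}\hat R[\hat u]_{k,k}(\hat v-\Pi_{k-2}\hat v)$. The operator $\hat v\mapsto\hat v-\Pi_{k-2}\hat v$ annihilates $Q^{k-2}(\hat K)$, in particular the constants; hence on the finite-dimensional space $Q^{k}(\hat K)$ it is controlled by the $H^{1}$-seminorm (equivalently, by Bramble--Hilbert $|\hat v-\Pi_{k-2}\hat v|_{0,2,\hat K}\le C[\hat v]_{k-1,\hat K}$, and $[\cdot]_{k-1,\hat K}\le C_k|\cdot|_{1,\hat K}$ on $Q^{k}(\hat K)$ since $[\cdot]_{k-1}$ vanishes on constants). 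This replaces $|\hat v|_{0,2,\hat K}=h^{-1}|v|_{0,e}$ by $|\hat v|_{1,2,\hat K}=|v|_{1,e}$, supplying exactly the missing power of $h$ and giving $\mathcal O(h^{k+2})\,|c|_{0,\infty,e}[u]_{k+1,e}|v|_{1,e}$ for the mean part.

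Finally I would assemble the two contributions into a per-cell bound of order $h^{k+2}\|c\|_{1,\infty,e}[u]_{k+1,e}\|v\|_{1,e}$ and sum over $e$, applying the Cauchy--Schwarz inequality over cells to reach $\mathcal O(h^{k+2})\|c\|_{1,\infty,\Omega}\|u\|_{k+1}\|v\|_{1}$. I note that, unlike Lemma~\ref{lemma-bilinear-laplacian}, this argument is purely elementwise and does not need the continuity of $v$ across cells, nor the splitting into $\hat R_{k+1,k+1}$ and $\hat R_1+\hat R_2$; keeping $\hat R[\hat u]_{k,k}$ intact is in fact what allows the estimate to close at $\|u\|_{k+1}$ rather than $\|u\|_{k+2}$.
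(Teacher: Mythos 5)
Your proof is correct and takes essentially the same route as the paper's: work cell-by-cell on the reference element, use the orthogonality of the M-type remainder $\hat R[\hat u]_{k,k}$ (property 3 of Theorem~\ref{plp-projection-theorem}) together with a Poincar\'e-type bound to trade $|\hat v|_{0,2,\hat K}$ for $|\hat v|_{1,2,\hat K}$, and recover the extra power of $h$ from the scaling of $|\hat c|_{1,\infty,\hat K}=h|c|_{1,\infty,e}$. The only difference is organizational: the paper subtracts the mean of the whole product, writing $\iint_{\hat K}\hat R[\hat u]_{k,k}(\hat c\hat v-\overline{\hat c\hat v})$ and applying Poincar\'e to $\hat c\hat v$, which produces exactly your two terms $|\hat c|_{0,\infty,\hat K}|\hat v|_{1,2,\hat K}+|\hat c|_{1,\infty,\hat K}|\hat v|_{0,2,\hat K}$ in one stroke, so your split $\hat c=\bar{\hat c}+(\hat c-\bar{\hat c})$ is a term-by-term equivalent reorganization (and for your mean part, orthogonality to constants plus Poincar\'e on $\hat v$ already suffices; the $L^2$ projection onto $Q^{k-2}(\hat K)$ is more machinery than needed).
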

\begin{proof}

Let $\overline{\hat c\hat v}$ be the average of $\hat c\hat v$ on $\hat K$. Following similar arguments as in the proof Lemma \ref{lemma-bilinear-laplacian},
\begin{align*}
\left|\iint_{\hat K}\hat  R[\hat u]_{k,k} \hat c \hat v\right|
=\left|\iint_{\hat K}\hat R[\hat u]_{k,k} (\hat c \hat v-\overline{\hat c\hat v})\right|
\leq|\hat R[\hat u]_{k,k}|_{0,2,\hat K}|\hat c \hat v-\overline{\hat c\hat v}|_{0,2,\hat K}\\
\leq C [u]_{k+1, 2,\hat K}  [\hat c \hat v]_{1,2,\hat K}
\leq C [u]_{k+1, 2,\hat K} (|\hat c|_{0,\infty, \hat K}| \hat v|_{1,2,\hat K}+
|\hat c|_{1,\infty, \hat K}| \hat v|_{0,2,\hat K}).  
\end{align*}
So with \eqref{scaling} we have
\[ \iint_{e} c R[ u]_{k,k} v dx dy=h^n  \iint_{\hat K} (R[\hat u]_{k,k}) \hat c\hat v ds dt=\mathcal O (h^{k+2})\|c\|_{1,\infty, \Omega}\|u\|_{k+1,e}\|v\|_{1,e},\]
which implies the estimate. 
\end{proof}

\begin{lemma}
Assume $b(x,y)\in W^{2,\infty}(\Omega).$ For $k\geq 2$, 
 \[
  \iint_{\Omega}b(u-u_p)_xv \,dxdy=\mathcal O(h^{k+2})\|u\|_{k+2}\|v\|_2,\quad \forall v\in V^h.
\]
\end{lemma}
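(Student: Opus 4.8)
The plan is to follow the template of Lemma~\ref{lemma-bilinear-laplacian}, the only essential change being how the critical piece is treated once the test function carries no derivative. I would argue cell by cell, and on each $e$ map everything to $\hat K$ via $\hat v(s,t)=v(x_e+sh,y_e+th)$ (and similarly for $\hat u$ and $\hat b$), so that, writing $\hat R[\hat u]_{k,k}$ for the transformed remainder $u-u_p$, one has $\iint_e b(u-u_p)_x v\,dxdy=h^{n-1}\iint_{\hat K}\hat b\,(\partial_s\hat R[\hat u]_{k,k})\,\hat v\,dsdt$. I would then split $\hat R[\hat u]_{k,k}=\hat R[\hat u]_{k+1,k+1}+R_1+R_2$ exactly as in \eqref{residue-splitting} and estimate the three contributions separately.

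For the high-order part $\hat R[\hat u]_{k+1,k+1}$ and for $R_1$ the argument is routine and parallels Lemma~\ref{lemma-bilinear-laplacian}. Since $\iint_{\hat K}\partial_s\hat R[\hat u]_{k+1,k+1}=0$ (Lemma~\ref{lemma-projection-remainder}) and $\iint_{\hat K}\partial_s R_1=0$ (Lemma~\ref{lemma-remainder-highlow}), I would subtract the mean $\overline{\hat b\hat v}$, apply Cauchy--Schwarz, and then the Poincar\'e inequality to bound $|\hat b\hat v-\overline{\hat b\hat v}|_{0,2,\hat K}\le C(|\hat b|_{1,\infty,\hat K}|\hat v|_{0,2,\hat K}+|\hat b|_{0,\infty,\hat K}|\hat v|_{1,2,\hat K})$. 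Combined with $|\partial_s\hat R[\hat u]_{k+1,k+1}|_{0,2,\hat K}\le C[\hat u]_{k+2,\hat K}$ and $|\partial_s R_1|_{0,2,\hat K}\le C|\hat u|_{k+2,\hat K}$, the scaling \eqref{scaling} yields $\mathcal O(h^{k+2})\|b\|_{1,\infty,\Omega}\|u\|_{k+2}\|v\|_1$ after summing over $e$ with Cauchy--Schwarz.

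The crux is $R_2=M_{k+1}(s)\hat b_{k+1}(t)$. Here $|\partial_s R_2|_{0,2,\hat K}$ is only $\mathcal O(|\hat u|_{k+1,\hat K})$ by the bound $|\hat b_{k+1}(t)|\le C|\hat u|_{k+1,\hat K}$ of Lemma~\ref{lemma-remainder-highlow}, which is one order short of the $\mathcal O([\hat u]_{k+2,\hat K})$ available for the other two pieces, so the mean-subtraction argument alone would give only $h^{k+1}$. To recover the missing power I would integrate by parts twice in $s$: first using $M_{k+1}(\pm1)=0$, then using the antiderivative $N(s)$ of $M_{k+1}(s)$ with $N(\pm1)=0$, to obtain $\iint_{\hat K}(\partial_s R_2)\hat b\hat v = \iint_{\hat K}N(s)\hat b_{k+1}(t)\,\partial_s^2(\hat b\hat v)\,dsdt$, where $\partial_s^2(\hat b\hat v)=\hat b_{ss}\hat v+2\hat b_s\hat v_s+\hat b\hat v_{ss}$. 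The key observation is that, because the test function enters \emph{without} a derivative, the two integrations by parts raise it only as far as $\hat v_{ss}$; this is exactly why $\|v\|_2$, and through $\hat b_{ss}$ the hypothesis $b\in W^{2,\infty}$, suffices, and why no orthogonality trick of the kind needed in Lemma~\ref{lemma-bilinear-laplacian} (which otherwise had to avoid a spurious $\hat v_{sss}$) is required here. Bounding by Cauchy--Schwarz with $|N|_{0,\infty,\hat K}\le C$ and $|\hat b_{k+1}(t)|\le C|\hat u|_{k+1,\hat K}$ gives $C|\hat u|_{k+1,\hat K}(|\hat b|_{2,\infty,\hat K}|\hat v|_{0,2,\hat K}+|\hat b|_{1,\infty,\hat K}|\hat v|_{1,2,\hat K}+|\hat b|_{0,\infty,\hat K}|\hat v|_{2,2,\hat K})$.

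The main obstacle is precisely this $R_2$ term: one must verify that the second integration by parts is admissible and produces the correct power of $h$. A direct count via \eqref{scaling} shows each of the three terms above scales like $h^{k+2}$, whereas stopping after a single integration by parts yields only $h^{k+1}$, so the second integration by parts is indispensable. Summing over all cells and applying Cauchy--Schwarz then gives $\mathcal O(h^{k+2})\|b\|_{2,\infty,\Omega}\|u\|_{k+1}\|v\|_2$ for $R_2$; adding the three contributions and using $\|u\|_{k+1}\le\|u\|_{k+2}$ and $\|v\|_1\le\|v\|_2$ completes the estimate.
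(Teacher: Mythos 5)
Your proposal is correct and follows essentially the same route as the paper's proof: the same splitting $\hat R[\hat u]_{k,k}=\hat R[\hat u]_{k+1,k+1}+\hat R_1+\hat R_2$, the same mean-subtraction/Poincar\'{e} argument for the first two pieces, and the same double integration by parts in $s$ (via $M_{k+1}(\pm1)=0$ and $N(\pm1)=0$) reducing $\hat R_2$ to $\iint_{\hat K}\hat b_{k+1}(t)N(s)\,\partial_s^2(\hat b\hat v)$, with identical scaling and summation. Your expansion $\partial_s^2(\hat b\hat v)=\hat b_{ss}\hat v+2\hat b_s\hat v_s+\hat b\hat v_{ss}$ is in fact the correct one (the paper omits the factor $2$, an inconsequential typo), and your observation that the absence of a derivative on $v$ is what makes the two integrations by parts terminate at $\hat v_{ss}$ accurately explains why no extra orthogonality argument is needed here.
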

\begin{proof}

Let $\overline{\hat b\hat v}$ be the average of $\hat b\hat v$ on $\hat K$. Following similar arguments as in the proof Lemma \ref{lemma-bilinear-laplacian}, we have
\begin{align*}&
\left|\iint_{\hat K} \partial_s (\hat R[\hat u]_{k+1,k+1}) \hat b\hat v\right|
=\left|\iint_{\hat K} \partial_s (\hat R[\hat u]_{k+1,k+1}) (\hat b\hat v-\overline{ \hat b\hat v})\right|\\
&\leq 
|\partial_s (\hat R[\hat u]_{k+1,k+1})|_{0,2,\hat K} |\overline{\hat b\hat v}-{\hat b\hat v}|_{0,2,\hat K}\leq 
C [\hat u]_{k+2,2,\hat K}(
|\hat b|_{1,\infty,\hat K} |\hat v|_{0,2,\hat K}+
|\hat b|_{0,\infty,\hat K} |\hat v|_{1,2,\hat K}). \end{align*}

\begin{align*}\iint_{\hat K} (\partial_s \hat R_1) \hat b\hat v=\iint_{\hat K} (\partial_s \hat R_1) (\hat b\hat v-\overline{ \hat b\hat v})\leq 
|\partial_s \hat R_1|_{0,2,\hat K} |\overline{\hat b\hat v}-{\hat b\hat v}|_{0,2,\hat K}\\
\leq C |\hat u|_{k+2,2,\hat K}(
|\hat b|_{1,\infty,\hat K} |\hat v|_{0,2,\hat K}+
|\hat b|_{0,\infty,\hat K} |\hat v|_{1,2,\hat K}).  \end{align*}

Let $N(s)$ be the antiderivative of $M_{k+1}(s)$. After integration by parts, we have 
\begin{align*}
 &\iint_{\hat K} (\partial_s \hat R_2) \hat b\hat v=
-\iint_{\hat K} \hat b_{k+1}(t)M_{k+1}(s) (\hat b_s\hat v+\hat b \hat v_{s})\\
=&\iint_{\hat K} \hat b_{k+1}(t)N(s) (\hat b_{ss}\hat v+\hat b_{s}\hat v_s+\hat b\hat v_{ss})\\
\leq& C|\hat u|_{k+1,2, \hat K}(|\hat b|_{2,\infty,\hat K}|\hat v|_{0,2,\hat K}+
|\hat b|_{1,\infty,\hat K}|\hat v|_{1,2,\hat K}+|\hat b|_{0,\infty,\hat K}|\hat v|_{2,2,\hat K}).
\end{align*}

After combining all the estimates, with \eqref{scaling}, we have 
\[  \iint_{e}b(u-u_p)_xv \,=h^{n-1} \iint_{\hat K}\hat b(R[\hat u]_{k,k})_s\hat v \,=\mathcal O(h^{k+2})\|b\|_{2,\infty,\Omega}\|u\|_{k+2,e}\|v\|_{2,e}.\]
\end{proof}

\begin{lemma}
\label{lemma-mixedderivative}
Assume $a(x,y)\in W^{2,\infty}(\Omega).$ For $k\geq 2$, 
 \begin{equation}
  \iint_{\Omega}a(u-u_p)_xv_y \,dxdy=\mathcal O(h^{k+2-\frac12})\|u\|_{k+2}\|v\|_2,\quad \forall v\in V^h,
\label{crossterm-1}
 \end{equation}
 \begin{equation}
 \iint_{\Omega}a(u-u_p)_xv_y \,dxdy=\mathcal O(h^{k+2})\|u\|_{k+2}\|v\|_2,\quad \forall v\in V^h_0.
\label{crossterm-2}
 \end{equation}
\end{lemma}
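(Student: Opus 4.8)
The plan is to remove the mixed‐derivative structure by integrating by parts in $y$ over the whole domain. The key observation is that, since $u$ is smooth and $u_p\in C^0(\Omega_h)$, the \emph{tangential} derivative $(u-u_p)_x$ is continuous across every interior horizontal edge; hence $\partial_y[a(u-u_p)_x]$ carries no line masses and the global integration by parts produces no interior jump terms. This gives the identity
\[
\iint_{\Omega}a(u-u_p)_xv_y\,dxdy=\int_{\partial\Omega}a\,(u-u_p)_x\,v\,n_y\,d\sigma-\iint_{\Omega}\bigl(a_y(u-u_p)_x+a(u-u_p)_{xy}\bigr)v\,dxdy,
\]
which isolates a codimension‐one boundary integral from two genuine volume integrals.

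The two volume integrals I would dispatch by the M-type machinery already developed in this section, obtaining $\mathcal O(h^{k+2})\|u\|_{k+2}\|v\|_2$ with no boundary effect. The first, $\iint_{\Omega}a_y(u-u_p)_xv$, is exactly of the type treated in the preceding first–order bilinear lemma. The second, $\iint_{\Omega}a(u-u_p)_{xy}v$, though more technical, is still a pure interior estimate: mapping to $\hat K$ and splitting $\hat R[\hat u]_{k,k}=\hat R[\hat u]_{k+1,k+1}+\hat R_1+\hat R_2$ as in \eqref{residue-splitting}, one controls the high order part and $\hat R_1$ by subtracting the average $\overline{\hat a\hat v}$ and applying Poincar\'e together with $|\partial_s\hat R[\hat u]_{k+1,k+1}|_{0,2,\hat K}\le C[\hat u]_{k+2,\hat K}$ and $|\partial_s\hat R_1|_{0,2,\hat K}\le C|\hat u|_{k+2,\hat K}$, while the $\hat R_2=M_{k+1}(s)\hat b_{k+1}(t)$ part is handled using $\int_{-1}^1M_{k+1}\,ds=0$, the orthogonality $M_{k+1}\perp P^{k-2}$, and the bounds $|\hat b_{k+1}|\le C|\hat u|_{k+1,\hat K}$, $|\hat b_{k+1}'|\le C|\hat u|_{k+2,\hat K}$ of Lemma~\ref{lemma-remainder-highlow}. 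The extra derivatives fall on $a\in W^{2,\infty}$, which is why the hypothesis is exactly $W^{2,\infty}$.

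Everything then reduces to the boundary integral, which is the heart of the matter. For $v\in V^h_0$ it vanishes identically, giving the clean rate \eqref{crossterm-2}. For general $v\in V^h$ it is precisely the term responsible for \eqref{crossterm-1}. On a boundary edge $\Gamma_e$ the tangential derivative has leading behaviour $(u-u_p)_x\sim\tfrac1h\,\tilde b_{k+1}\,M_{k+1}'(s)$ with $M_{k+1}'=l_k$, so I would exploit $l_k\perp P^{k-1}$ (one–dimensional Bramble--Hilbert along the edge) to replace $v$ by its degree $\ge k$ content, exposing the two factors $\|\partial_x^{k+1}u\|_{L^2(\Gamma_e)}$ and $\|\partial_x^{k}v\|_{L^2(\Gamma_e)}$ accompanied by a high power of $h$. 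The high tangential derivative of $v$ is then traded for $\|v\|_2$ by the inverse inequality $|v|_{k,e}\le Ch^{-(k-2)}|v|_{2,e}$ valid on $Q^k(e)$, together with a trace estimate; summing over boundary cells and using $\|\partial_x^{k+1}u\|_{L^2(\partial\Omega)}\le C\|u\|_{k+2}$ balances the powers to $\mathcal O(h^{k+2-\frac12})\|u\|_{k+2}\|v\|_2$.

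The main obstacle is exactly this boundary edge term: it is the only place where the cross derivative cannot be absorbed into an interior Poincar\'e estimate, and it accounts both for the gap between \eqref{crossterm-1} and \eqref{crossterm-2} and for the extra half power. The lost $h^{1/2}$ is the trace cost of an integral living on the codimension‐one set $\partial\Omega$, which is unavoidable once the high tangential derivative of $v$ is converted to $\|v\|_2$.
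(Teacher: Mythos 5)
Your starting identity is valid: since $u_p$ restricted to any horizontal edge is the one-dimensional projection of $u$ along that edge, the tangential derivative $(u-u_p)_x$ is indeed continuous across interior horizontal edges, the global integration by parts in $y$ produces no jump terms, and the boundary integral vanishes for $v\in V^h_0$. The genuine gap is in the volume term $\iint_\Omega a(u-u_p)_{xy}v$. On the reference cell this term reads $\iint_{\hat K}\hat a\,\partial_s\partial_t\hat R\,\hat v\,dsdt$ with \emph{no} power of $h$ in front (two derivatives of the remainder cancel the Jacobian $h^2$ exactly), whereas the estimates you cite, $|\partial_s\hat R[\hat u]_{k+1,k+1}|_{0,2,\hat K}\le C[\hat u]_{k+2,\hat K}$ and $|\partial_s\hat R_1|_{0,2,\hat K}\le C|\hat u|_{k+2,\hat K}$, control only the single derivative $\partial_s\hat R$ and do not apply to the integrand $\partial_s\partial_t\hat R$ that actually appears. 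Even granting the analogous mixed-derivative bounds, your average-subtraction argument falls one order short: with
\begin{equation*}
\left|\iint_{\hat K}\partial_s\partial_t\hat R\,(\hat a\hat v-\overline{\hat a\hat v})\right|\le C[\hat u]_{k+2,\hat K}\bigl(|\hat a|_{1,\infty,\hat K}\|\hat v\|_{0,2,\hat K}+|\hat a|_{0,\infty,\hat K}|\hat v|_{1,2,\hat K}\bigr),
\end{equation*}
the scaling $[\hat u]_{k+2,\hat K}=h^{k+1}[u]_{k+2,e}$, $\|\hat v\|_{0,2,\hat K}=h^{-1}\|v\|_{0,e}$, $|\hat v|_{1,2,\hat K}=|v|_{1,e}$, $|\hat a|_{1,\infty,\hat K}=h|a|_{1,\infty,e}$ yields only $\mathcal O(h^{k+1})[u]_{k+2,e}\|v\|_{1,e}$. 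The reason Poincar\'e works in Lemma~\ref{lemma-bilinear-laplacian} is that there the test factor is $\hat v_s$ or $\hat v_t$, whose reference-cell gradient brings second derivatives of $v$ and hence an extra factor of $h$; here the test factor is $\hat v$ itself, and subtracting a constant gains nothing. To salvage your route one would need genuinely new ingredients not in the paper's toolkit and not in your sketch: orthogonality of $\partial_s\partial_t\hat R$ (and of $\partial_s\partial_t\hat R_1$, $\partial_s\partial_t\hat R_2$) to all of $Q^1(\hat K)$, proved through the edge-orthogonality of the projection, followed by subtraction of the $Q^1$ projection $\hat\Pi_1(\hat a\hat v)$ with a second-order Bramble--Hilbert bound. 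In short, the global integration by parts does not dissipate the difficulty; it converts the original term into one that is strictly harder for the technique you then invoke.

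There is a second, smaller gap: you dispatch $\iint_\Omega a_y(u-u_p)_xv$ by citing the first-order bilinear lemma, but that lemma assumes its coefficient lies in $W^{2,\infty}(\Omega)$, and its proof genuinely uses two derivatives of the coefficient (the $\hat b_{ss}$ term produced by the $N(s)$ integration by parts in the $\hat R_2$ estimate; with only one derivative available the $\hat R_2$ contribution is again $\mathcal O(h^{k+1})$). Under the stated hypothesis $a\in W^{2,\infty}(\Omega)$ you only have $a_y\in W^{1,\infty}(\Omega)$, so this step forces you to strengthen the hypothesis to roughly $a\in W^{3,\infty}(\Omega)$. The paper avoids both problems by never differentiating $a$ and never forming $(u-u_p)_{xy}$: it keeps $v_y$, estimates the high-order part and $\hat R_1$ by Poincar\'e exactly as in Lemma~\ref{lemma-bilinear-laplacian}, and only for the $\hat R_2$ piece performs cell-wise integration by parts (in $s$ using the antiderivative $N$, then in $t$), whose resulting line integrals cancel across interior horizontal edges — the same continuity you observed — leaving only the top and bottom boundary contributions that produce the $h^{1/2}$ loss in \eqref{crossterm-1} and vanish for $v\in V^h_0$ in \eqref{crossterm-2}.
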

\begin{proof}
 Similar to the proof of Lemma \ref{lemma-bilinear-laplacian}, we have 
 \begin{align*}
&\left|\iint_{e} a(R[u]_{k+1,k+1})_xv_y \,dxdy\right|=h^{n-2}\left|\iint_{\hat K} \partial_s (\hat R[\hat u]_{k+1,k+1}) \hat a\hat v_t ds dt\right|\\
=&
h^{n-2}\left| \iint_{\hat K} \partial_s (\hat R[\hat u]_{k+1,k+1}) (\overline{\hat a\hat v_t}-{\hat a\hat v_t}) ds dt\right| 
\leq h^{n-2}|\partial_s (\hat R[\hat u]_{k+1,k+1})|_{0,2,\hat K} |\overline{\hat a\hat v_t}-{\hat a\hat v_t}|_{0,2,\hat K} \\
\leq& C h^{k+2} \|a\|_{1,\infty,\Omega}\|u\|_{k+2,e}\|v\|_{2,e},
\end{align*}
and
\[\iint_{\hat K} (\partial_s \hat R_1) \hat a\hat v_t=\iint_{\hat K} (\partial_s \hat R_1) (\hat a\hat v_t-\overline{ \hat a\hat v_t})\leq 
|\partial_s \hat R_1|_{0,2,\hat K} |\overline{\hat a\hat v_t}-{\hat a\hat v_t}|_{0,2,\hat K}.\]
Following the proof of Lemma \ref{lemma-bilinear-laplacian}, with \eqref{scaling}, we get 
\[\sum_e \iint_{e} a(R_1)_xv_y \,dxdy=\mathcal O(h^{k+2})\|a\|_{1,\infty,\Omega} \|u\|_{k+2,\Omega}\|v\|_{2,\Omega}.\]
Let $N(s)$ be the antiderivative of $M_{k+1}(s)$. After integration by parts, we have 
\begin{align*}
 &\iint_{\hat K} (\partial_s \hat R_2) \hat a\hat v_t=
-\iint_{\hat K} \hat b_{k+1}(t)M_{k+1}(s) (\hat a_s\hat v_t+\hat a \hat v_{st})\\
=&\iint_{\hat K} \hat b_{k+1}(t)N(s) (\hat a_{ss}\hat v_t+2\hat a_{s}\hat v_{st})+\iint_{\hat K} \hat b_{k+1}(t)N(s)\hat a\hat v_{sst}. 
\end{align*}

After integration by parts on the $t$-variable, 
\[-\iint_{\hat K} \hat b_{k+1}(t)N(s)\hat a\hat v_{sst} =\iint_{\hat K} \partial_t[\hat  b_{k+1}(t)N(s)\hat a]\hat v_{ss}-\left.\int_{-1}^1\hat  b_{k+1}(t)N(s)\hat a \hat v_{ss} ds\right|_{t=-1}^{t=1},\]
\[\iint_{\hat K} \partial_t[\hat b_{k+1}(t)N(s)\hat a]\hat v_{ss}=
\iint_{\hat K} [\hat b_{k+1}'(t)N(s)\hat a+\hat b_{k+1}(t)N(s)\hat a_t]\hat v_{ss}.\]

By Lemma \ref{lemma-remainder-highlow}, we have the estimate for the two double integral terms
\begin{align*}\left|\iint_{\hat K} \hat b_{k+1}(t)N(s) (\hat a_{ss}\hat v_t+2\hat a_{s}\hat v_{st})\right|
 \leq C|\hat u|_{k+1,2,\hat K}(|\hat a|_{2,\infty,\hat K}|\hat v|_{1,2,\hat K}+|\hat a|_{1,\infty,\hat K}|\hat v|_{2,2,\hat K}), 
\end{align*}
\begin{align*}
&\left|\iint_{\hat K} [\hat b_{k+1}'(t)N(s)\hat a+\hat b_{k+1}(t)N(s)\hat a_t]\hat v_{ss}\right|\\
\leq& C(|\hat u|_{k+2,2,\hat K}|\hat a|_{0,\infty,\hat K}|\hat v|_{2,2,\hat K}+
|\hat u|_{k+1,2,\hat K}|\hat a|_{1,\infty,\hat K}|\hat v|_{2,2,\hat K}), 
\end{align*}
which gives the estimate $C h^{k+2}\|a\|_{2,\infty,\Omega}\|u\|_{k+2,e}\|v\|_{k+2,e}$ after mapping back to $e$.

So we only need to discuss the line integral term now.
After mapping back to $e$, we have
\begin{align*}
 \left.\int_{-1}^1 \hat b_{k+1}(t)M_{k+1}(s)\hat a \hat v_{ss} ds\right|_{t=-1}^{t=1}= h\left. \int_{x_e-h}^{x_e+h} b_{k+1}(y)M_{k+1}(\frac{x-x_e}{h})a v_{xx} dx\right|_{y=y_e-h}^{y=y_e+h}. 
\end{align*}
Notice that we have
\begin{align*}b_{k+1}(y_e+h)=\hat b_{k+1}(1)=\sum_{j=0}^{k+1} \hat b_{k+1,j} M_j(1)=\hat b_{k+1,0}+ \hat b_{k+1,1}\\
=(k+\frac12)\int_{-1}^1 \partial_s \hat u(s,1) l_k(s)ds 
=(k+\frac12)\int_{x_e-h}^{x_e+h} \partial_x  u(x,y_e+h) l_k(\frac{x-x_e}{h})dx,\end{align*}
and similarly we get  $b_{k+1}(y_e-h)=\hat b_{k+1}(-1)=(k+\frac12)\int_{x_e-h}^{x_e+h} \partial_x  u(x,y_e-h) l_k(\frac{x-x_e}{h})dx$. Thus the term $b_{k+1}(y)M_{k+1}(\frac{x-x_e}{h})a v_{xx}$ is continuous across the top/bottom edge of cells. Therefore, if summing over all 
elements $e$, the line integral on the inner edges are cancelled out. Let $L_1$ and $L_3$ denote the top and bottom boundary of $\Omega$. Then the line integral after summing over $e$ consists of two line integrals along $L_1$ and $L_3$. We only need to discuss one of them. 

Let $l_1$ and $l_3$ denote the top and bottom edge of $e$. 
First, after integration by parts $k$ times, we get
\begin{align*}\hat b_{k+1}(1)
=(k+\frac12)\int_{-1}^1 \partial_s \hat u(s,1) l_k(s)ds=(-1)^k (k+\frac12)\int_{-1}^1 \frac{\partial^{k+1}}{\partial s^{k+1}} \hat u(s,1) \frac{1}{2^k k!}(s^2-1)^kds,\end{align*}
thus by Cauchy Schwarz inequality we get
\[|\hat b_{k+1}(1)|\leq C_k\sqrt{\int_{-1}^1 \left[\frac{\partial^{k+1}}{\partial s^{k+1}} \hat u(s,1)\right]^2  ds} \leq C_k h^{k+\frac12}|u|_{k+1,2,l_1}. \]
Second, since $v^2_{xx}$ is a polynomial of degree $2k$ w.r.t. $y$ variable, by using $(k+2)$-point Gauss Lobatto quadrature for integration w.r.t. $y$-variable in $\iint_e v^2_{xx} dx dy $, 
we get 
\[\int_{x_e-h}^{x_e+h} v^2_{xx}(x,y_e+h) dx \leq C h^{-1}\iint_{e} v^2_{xx}(x,y) dxdy. \]
So by Cauchy Schwarz inequality, we have 
$$\int_{x_e-h}^{x_e+h} |v_{xx}(x,y_e+h)| dx\leq \sqrt{2h} \sqrt{\int_{x_e-h}^{x_e+h} v^2_{xx}(x,y_e+h) dx}\leq C |v|_{2,2,e}.$$

Thus the line integral along $L_1$ can be estimated by considering each $e$ adjacent to $L_1$ in the reference cell: 
\begin{align*}&\sum_{e\cap L_1\neq \emptyset} \left|\int_{-1}^1 \hat b_{k+1}(1)M_{k+1}(s)\hat a(s,1) \hat v_{ss}(s,1) ds\right|
\\  \leq& \sum_{e\cap L_1\neq \emptyset}C |\hat a|_{0,\infty,\hat K} |\hat b_{k+1}(1)|\int_{-1}^1 |\hat v_{ss}(s,1)| ds\\
= &\mathcal O(h^{k+\frac32})\sum_{e\cap L_1\neq \emptyset}|u|_{k+1, 2, l_1} 
\int_{x_e-h}^{x_e+h} |v_{xx}(x,y_e+h)| dx\\
=& \mathcal O(h^{k+\frac32})\sum_{e\cap L_1\neq \emptyset}|u|_{k+1, 2, l_1} 
|v|_{2, 2, e}\\
=&\mathcal O(h^{k+\frac32}) \|u\|_{k+1, L_1} 
\|v\|_{2, \Omega}=\mathcal O(h^{k+\frac32}) \|u\|_{k+2, \Omega} 
\|v\|_{2, \Omega},
\end{align*}
where the trace inequality $ \|u\|_{k+1,\partial \Omega} \leq C  \|u\|_{k+2, \Omega}$ is used.

Combine all the estimates above, we get \eqref{crossterm-1}. Since the $\frac12$ order loss is only due to the line integral along $L_1$ and $L_3$, on which  $v_{xx}=0$ if $v\in V^h_0$, we get \eqref{crossterm-2}.
\end{proof}

\section{The main result}
\label{sec-main}
\subsection{Superconvergence of bilinear forms with  approximated coefficients}
\label{sec-main-pdesection}
Even though standard interpolation error is $a-a_I=\mathcal O (h^{k+1})$, as shown in the following discussion, the error in the bilinear forms is related to $\iint_{e} (a- a_I)\, dxdy$ on each cell $e$, which is the quadrature error thus the order is higher. We have the following estimate on the bilinear forms with approximated coefficients:
\begin{lemma}
\label{thm-term1}
Assume $a(x,y)\in W^{k+2,\infty}(\Omega)$ and $u(x,y)\in H^2(\Omega)$,  then $\forall v\in V^h$ or $\forall v\in H^2(\Omega),$
 \begin{align*}
  \iint_{\Omega}a u_x v_x\,dxdy-\iint_{\Omega}a_Iu_x v_x \,dxdy=\mathcal O(h^{k+2})\|a\|_{k+2,\infty,\Omega} \|u\|_2\|v\|_2, \\
  \iint_{\Omega}a u_x v_y\,dxdy-\iint_{\Omega}a_Iu_x v_y \,dxdy=\mathcal O(h^{k+2})\|a\|_{k+2,\infty,\Omega} \|u\|_2\|v\|_2, \\
  \iint_{\Omega}a u_x v \,dxdy-\iint_{\Omega}a_I u_x v\,dxdy=\mathcal O(h^{k+2})\|a\|_{k+2,\infty,\Omega} \|u\|_2\|v\|_1, \\
    \iint_{\Omega}a u v \,dxdy-\iint_{\Omega}a_I u v\,dxdy=\mathcal O(h^{k+2})\|a\|_{k+2,\infty,\Omega} \|u\|_1\|v\|_1. 
 \end{align*}
 \end{lemma}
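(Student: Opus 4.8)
The plan is to prove all four identities by a single mechanism, so I would carry out the first one, $\iint_\Omega a u_x v_x\,dxdy - \iint_\Omega a_I u_x v_x\,dxdy = \iint_\Omega (a-a_I)\,u_x v_x\,dxdy = \sum_e \iint_e (a-a_I)\,u_x v_x\,dxdy$, in full and only indicate the bookkeeping for the rest. The decisive observation, anticipated in the remark preceding the lemma, is that $a-a_I$ vanishes at every Gauss-Lobatto node, so on each cell $\iint_e (a-a_I)\,dxdy$ is a pure quadrature error: since the $(k+1)$-point Gauss-Lobatto rule is exact up to degree $2k-1\ge k+1$, this integral is one order smaller than the naive interpolation bound $a-a_I=\mathcal O(h^{k+1})$ would suggest. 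This is the whole source of the extra power of $h$ and the only place where $k\ge2$ enters.

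On each cell I would split the test factor $u_x v_x$ into its cell average $\bar G$ and the fluctuation $u_x v_x-\bar G$. For the average part, $\bar G\iint_e(a-a_I)\,dxdy$ is controlled directly by Theorem \ref{quaderror-theorem} with $m=k+2$ (admissible precisely because $k\le k+2\le 2k$ forces $k\ge2$), giving $\iint_e(a-a_I)\,dxdy=\mathcal O(h^{k+2+n})[a]_{k+2,\infty,e}$, while $|\bar G|\le C h^{-n}|u|_{1,2,e}|v|_{1,2,e}$ by Cauchy-Schwarz; the powers of $h^{n}$ cancel and no pointwise values of $u_x v_x$ are needed. This routing is essential: for $u,v\in H^2(\Omega)$ (the case $v\in V^h$ being subsumed, since only $H^2$-regularity of $v$ is used) the product $u_x v_x$ is merely integrable because in two dimensions $H^1\not\hookrightarrow C^0$, so a direct point-value quadrature-error estimate of the product is unavailable, and handling the constant part through the average is exactly what sidesteps this.

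For the fluctuation part I would pass to the reference cell, where after the scaling \eqref{scaling} (the factors $u_x v_x\sim h^{-2}$ and $dxdy\sim h^2$ cancelling) the term becomes $\iint_{\hat K}(\hat a-\hat a_I)(\hat u_s\hat v_s-\bar{\hat g})\,dsdt$, and estimate it by Hölder, the interpolation bound $\|\hat a-\hat a_I\|_{0,\infty,\hat K}\le C[\hat a]_{k+1,\infty,\hat K}$ of Theorem \ref{interp-theorem}, and the Poincaré inequality with $p=1$, $\|\hat u_s\hat v_s-\bar{\hat g}\|_{0,1,\hat K}\le C\|\nabla(\hat u_s\hat v_s)\|_{0,1,\hat K}\le C(|\hat u|_{2,2,\hat K}|\hat v|_{1,2,\hat K}+|\hat u|_{1,2,\hat K}|\hat v|_{2,2,\hat K})$. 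One second-derivative scaling supplies a single extra factor of $h$, so this contribution is again $\mathcal O(h^{k+2})$. Mapping both pieces back via \eqref{scaling}, summing over cells, and applying Cauchy-Schwarz yields $\mathcal O(h^{k+2})\|a\|_{k+2,\infty,\Omega}\|u\|_2\|v\|_2$. The three remaining identities are identical up to powers of $h$: replacing $\hat v_s$ by $\hat v_t$ changes nothing, whereas dropping a derivative from one factor (so that $v$ or $u$ appears undifferentiated) produces an extra prefactor $h$ from the scaling that is exactly compensated by the lower Sobolev index on the right-hand side ($\|v\|_2\to\|v\|_1$ or $\|u\|_2\to\|u\|_1$). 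I expect the only delicate point to be the regularity bookkeeping of the fluctuation term, namely confirming that the Poincaré and Hölder steps together with the scaling never consume more than the single extra power of $h$ that the quadrature exactness has bought; the average part, by contrast, is an immediate consequence of Theorem \ref{quaderror-theorem}.
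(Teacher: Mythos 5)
Your proposal is correct and follows essentially the same route as the paper's proof: the same cell-average/fluctuation splitting of $u_xv_x$, with the average part controlled by the quadrature-error estimate of Theorem \ref{quaderror-theorem} (with $m=k+2$) and the fluctuation part by the interpolation bound of Theorem \ref{interp-theorem} together with the Poincar\'{e} and Cauchy--Schwarz inequalities, then summation over cells. The only difference is presentational (you run the fluctuation estimate on the reference cell while the paper works on the physical cell), and your extra bookkeeping for the remaining three identities matches what the paper leaves as "similarly."
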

\begin{proof}
  For every cell $e$ in the mesh $\Omega_h$, let $\overline{u_xv_x}$ be the cell average of ${u_xv_x}$. By Theorem  \ref{interp-theorem} and
  Theorem \ref{quaderror-theorem} , we have
 \begin{align*}
  &\iint_e (a_I-a)u_xv_x\\
  =&\iint_e (a_I-a)\overline{u_xv_x}+\iint_e (a_I-a)(u_xv_x-\overline{u_xv_x})\\
  =&\frac{1}{4h^2}\iint_e (a_I-a)\iint_e u_xv_x+\iint_e (a_I-a)(u_xv_x-\overline{u_xv_x})\\
  =& \mathcal O(h^{k+2})\|a\|_{k+2,\infty,\Omega}\|u\|_{1,e}\|v\|_{1,e}+\mathcal O(h^{k+1})\|a\|_{k+1,\infty,\Omega}\iint_e |u_xv_x-\overline{u_xv_x}|.
 \end{align*}
 
  By Poincar\'{e} inequality and Cauchy-Schwarz inequality, we have 
  $$\iint_e |u_xv_x-\overline{u_xv_x}|=\mathcal O(h)\|\nabla(u_xv_x)\|_{0,1,e}=\mathcal O(h)\|u\|_{2,e}\|v\|_{2,e}$$
  thus 
  $\iint_e (a_I-a)u_xv_x=\mathcal O(h^{k+2})\|a\|_{k+2,\infty,\Omega}\|u\|_{2,e}\|v\|_{2,e}.$
 Summing over all elements $e$, we have $\iint_{\Omega} (a_I-a)u_xv_x=\mathcal O(h^{k+2}) \|a\|_{k+2,\infty,\Omega} \|u\|_2\|v\|_2.$ Similarly we can establish the other three estimates. 
\end{proof}
Lemma 
\ref{thm-term1} implies that the difference in the solutions to  \eqref{Poisson-approximated} and \eqref{poisson} is  $\mathcal O(h^{k+2})$ in the $L^2(\Omega)$-norm:
\begin{thm} 
\label{thm-pde-error}
Assume $a(x,y)\in W^{k+2,\infty}(\Omega)$ and $a_I(x,y)\geq C>0$.
Let $u, \tilde u\in H_0^1(\Omega)$ be the solutions to 
$$A(u,v):=\iint a\nabla u\cdot\nabla v\,dxdy=(f,v),\quad\forall v \in H_0^1(\Omega)$$
and
$$A_I(\tilde u,v):=\iint a_I\nabla \tilde u\cdot\nabla v\,dxdy=(f,v),\quad\forall v \in H_0^1(\Omega)$$
respectively, where $f\in L^2(\Omega)$. Then $\|u-\tilde u\|_0=\mathcal O(h^{k+2}) \|a\|_{k+2,\infty,\Omega} \|f\|_0.$
\end{thm}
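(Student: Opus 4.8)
The plan is to use a duality (Aubin--Nitsche) argument, feeding the sharp bilinear-form estimate of Lemma \ref{thm-term1} into the dual pairing, so that the naive $\mathcal{O}(h^{k+1})$ order suggested by the interpolation error $a-a_I$ is boosted to $\mathcal{O}(h^{k+2})$. The first preparatory step is to record the error equation: subtracting the two variational problems and using $A(u,v)=(f,v)$ gives
$$A_I(u-\tilde u,v)=A_I(u,v)-A(u,v)=\iint_\Omega (a_I-a)\nabla u\cdot\nabla v,\quad\forall v\in H_0^1(\Omega).$$
Taking $v=u-\tilde u$, using the coercivity of $A_I$ (which holds since $a_I\ge C>0$), the Poincar\'e inequality, and the crude interpolation bound $\|a-a_I\|_{0,\infty}=\mathcal{O}(h^{k+1})$ from Theorem \ref{interp-theorem}, I would obtain the energy estimate $\|u-\tilde u\|_1=\mathcal{O}(h^{k+1})\|u\|_1=\mathcal{O}(h^{k+1})\|f\|_0$. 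Here I also invoke the elliptic regularity $\|u\|_1\le C\|f\|_0$ and $\|u\|_2\le C\|f\|_0$ on the convex rectangle, the latter for later use.

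Next comes the duality step. Since $A$ is symmetric, let $w\in H_0^1(\Omega)$ solve $A(w,\phi)=(u-\tilde u,\phi)$ for all $\phi\in H_0^1(\Omega)$, i.e. $-\nabla\cdot(a\nabla w)=u-\tilde u$; elliptic regularity on the rectangle gives $w\in H^2(\Omega)\cap H_0^1(\Omega)$ with $\|w\|_2\le C\|u-\tilde u\|_0$. Choosing $\phi=u-\tilde u$ and expanding via the two defining equations (with $A(u,w)=(f,w)$ and $A(\tilde u,w)=(f,w)+\iint(a-a_I)\nabla\tilde u\cdot\nabla w$) yields
$$\|u-\tilde u\|_0^2=A(u-\tilde u,w)=\iint_\Omega (a_I-a)\nabla \tilde u\cdot\nabla w.$$
The key manoeuvre is the splitting $\nabla\tilde u=\nabla u-\nabla(u-\tilde u)$. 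To the $\nabla u$ part I apply Lemma \ref{thm-term1} (its first estimate for $u_xw_x$, and the same bound for $u_yw_y$ by the symmetry between $x$ and $y$), obtaining $\iint(a_I-a)\nabla u\cdot\nabla w=\mathcal{O}(h^{k+2})\|a\|_{k+2,\infty}\|u\|_2\|w\|_2$, which is legitimate because both $u$ and $w$ lie in $H^2(\Omega)$. To the remainder part I apply only the crude bound $\|a-a_I\|_{0,\infty}=\mathcal{O}(h^{k+1})$ together with the energy estimate, giving $\iint(a_I-a)\nabla(u-\tilde u)\cdot\nabla w=\mathcal{O}(h^{k+1})\|u-\tilde u\|_1\|w\|_1=\mathcal{O}(h^{2k+2})\|f\|_0\|w\|_2$, which is higher order since $2k+2\ge k+2$ for $k\ge2$. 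Inserting $\|w\|_2\le C\|u-\tilde u\|_0$, dividing through by $\|u-\tilde u\|_0$, and using $\|u\|_2\le C\|f\|_0$ then produces the claimed bound.

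The main obstacle is that one cannot directly apply Lemma \ref{thm-term1} to the full term $\iint(a_I-a)\nabla\tilde u\cdot\nabla w$, because that would require an $H^2$ bound on $\tilde u$, whereas $\tilde u$ solves an equation with the merely piecewise-smooth, globally $C^0$ coefficient $a_I$, for which no $h$-uniform global $H^2$ regularity is available. The splitting $\nabla\tilde u=\nabla u-\nabla(u-\tilde u)$ is exactly what circumvents this: it isolates the genuinely smooth $H^2$ factor $\nabla u$, to which the sharp estimate applies, and relegates the low-regularity factor $\nabla(u-\tilde u)$ to a crude bound that is nonetheless of higher order thanks to the already-established $H^1$ energy estimate. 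Checking that both auxiliary contributions truly fall below $\mathcal{O}(h^{k+2})$ is the point that most warrants care, but it follows from the order bookkeeping above.
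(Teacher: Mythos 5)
Your proposal is correct, but it runs the duality argument differently from the paper. The paper poses the dual problem with the \emph{approximated} form: $w$ solves $A_I(v,w)=(u-\tilde u,v)$ for all $v\in H^1_0(\Omega)$. Then, since $A_I(\tilde u,w)=(f,w)=A(u,w)$ exactly, one gets $\|u-\tilde u\|_0^2=A_I(u-\tilde u,w)=A_I(u,w)-A(u,w)$, and a single application of Lemma \ref{thm-term1} with the two $H^2$ functions $u$ and $w$ finishes the proof --- no energy estimate and no splitting of $\nabla\tilde u$ are needed. The price is that the paper must justify $H^2$ elliptic regularity for the dual problem with the merely Lipschitz, piecewise-polynomial coefficient $a_I$ (via positivity of $a_I$, Lipschitz continuity, and convexity of $\Omega$, citing Grisvard). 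You instead pose the dual problem with the \emph{exact} form $A$, where regularity is standard for the smooth coefficient $a$; but then the error representation $\|u-\tilde u\|_0^2=\iint(a_I-a)\nabla\tilde u\cdot\nabla w$ contains $\nabla\tilde u$, which has no $h$-uniform $H^2$ control, and you correctly repair this with the splitting $\nabla\tilde u=\nabla u-\nabla(u-\tilde u)$ plus the preliminary energy estimate $\|u-\tilde u\|_1=\mathcal O(h^{k+1})\|f\|_0$, the cross term being $\mathcal O(h^{2k+2})$ and hence harmless for $k\geq 2$. Both routes hinge on the same key ingredient (Lemma \ref{thm-term1} boosting the naive $h^{k+1}$ to $h^{k+2}$); yours is slightly longer but avoids the regularity theory for $a_I$, while the paper's is shorter and reuses the $A_I$-regularity it needs anyway for Theorem \ref{mainthm}. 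One cosmetic point: your cross term carries a factor $\|a\|_{k+1,\infty}^2$, so strictly speaking the final constant bookkeeping differs from the stated bound $\mathcal O(h^{k+2})\|a\|_{k+2,\infty}\|f\|_0$ unless you absorb the higher-order term for $h$ small, which is harmless but worth stating.
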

\begin{proof}
By
Lemma 
\ref{thm-term1}, for any $v\in H^2 (\Omega)$ we have
\begin{align*}
 A_I(u-\tilde u, v)=A_I(u, v)-A_I(\tilde u, v)=[A_I(u, v)-A(u, v)]+[A(u, v)-A_I(\tilde u, v)]\\
 =A_I(u, v)-A(u, v)=\mathcal O(h^{k+2})\|a\|_{k+2,\infty,\Omega} \| u\|_2 \|v\|_2.
\end{align*}
Let $w\in H_0^1(\Omega)$ be the solution to the dual problem
\begin{align*}
 A_I(v,w)=(u-\tilde u,v) \quad \forall v\in H^1_0(\Omega).
\end{align*}
Since $a_I\geq C>0$ and $|a_I(x,y)|\leq C|a(x,y)|$, the coercivity and boundedness of the bilinear form $A_I$ hold \cite{ciarlet1991basic}. Moreover, $a_I$ is Lipschitz continuous because $a(x,y)\in W^{k+2,\infty}(\Omega)$. Thus the solution $w$ exists and the elliptic regularity
$\|w\|_2\leq C \|u-\tilde u\|_0$ holds on a convex domain, e.g., a rectangular domain $\Omega$, see \cite{grisvard2011elliptic}. Thus, 
 \[\|u-\tilde u\|_0^2=(u-\tilde u, u-\tilde u)=A_I(u-\tilde u,w)=\mathcal O(h^{k+2})\|a\|_{k+2,\infty,\Omega} \|u\|_2\|w\|_2.\]
 With elliptic regularity $\|w\|_2\leq C \|u-\tilde u\|_0$  and $\|u\|_2\leq C\|f\|_0$, we get 
 \[\|u-\tilde u\|_0=\mathcal O(h^{k+2})\|a\|_{k+2,\infty,\Omega} \|f\|_0.\]
\end{proof}

\begin{rmk} For even number $k\geq 4$, $(k+1)$-point Newton-Cotes quadrature rule has the same error order as the $(k+1)$-point Gauss-Lobatto quadrature rule.
Thus Theorem \ref{thm-pde-error} still holds if we redefine $a_I(x,y)$ as the $Q^k$ interpolant of $a(x,y)$ at the uniform $(k+1)\times(k+1)$ Newton-Cotes points in each cell
if $k\geq 4$ is even. 
\end{rmk}

\subsection{The variable coefficient Poisson equation}

Let $u(x,y)\in H^1_0(\Omega)$ be the exact solution to 
\begin{equation*}
 A(u,v):=\iint_{\Omega} a \nabla u\cdot \nabla v \, dx dy=(f,v),\quad \forall v\in H^1_0(\Omega).
\end{equation*}
Let $\tilde u_h\in V^h_0(\Omega)$ be the  solution to 
\begin{equation*}
 A_I(\tilde u_h,v_h): =\iint_{\Omega} a_I \nabla \tilde u_h \cdot \nabla v_h \, dx dy=\langle f, v_h \rangle_h,\quad \forall v_h\in V^h_0(\Omega).
\end{equation*}

\begin{thm}
\label{theorem1}
For $k\geq 2$, let $u_p$ be the piecewise $Q^k$ M-type projection of $u(x,y)$ on each cell $e$ in the mesh $\Omega_h$. Assume $a\in W^{k+2,\infty}(\Omega)$ and $u, f\in H^{k+2}(\Omega)$, 
 then  
 $$A_I(\tilde{u}_h-u_p, v_h)=\mathcal O(h^{k+2})(\|a\|_{k+2,\infty}  \|u\|_{k+2}+\|f\|_{k+2})\|v_h\|_2,\quad \forall v_h\in V^h_0.$$
\end{thm}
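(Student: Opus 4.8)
The plan is to exploit the two variational characterizations of $\tilde u_h$ and $u$ and rewrite $A_I(\tilde u_h-u_p,v_h)$ as a sum of three error terms, each matching an estimate already established. Since $\tilde u_h$ satisfies $A_I(\tilde u_h,v_h)=\langle f,v_h\rangle_h$ and $A_I$ is bilinear, I would first write
\begin{equation*}
A_I(\tilde u_h-u_p,v_h)=\langle f,v_h\rangle_h-A_I(u_p,v_h)=\langle f,v_h\rangle_h-A_I(u,v_h)+A_I(u-u_p,v_h).
\end{equation*}
Inserting $(f,v_h)=A(u,v_h)$, which is legitimate because $v_h\in V^h_0\subset H_0^1(\Omega)$, into the first two terms yields the three-term splitting
\begin{equation*}
A_I(\tilde u_h-u_p,v_h)=\big[\langle f,v_h\rangle_h-(f,v_h)\big]+\big[A(u,v_h)-A_I(u,v_h)\big]+A_I(u-u_p,v_h).
\end{equation*}

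Next I would bound each bracket separately. The first term is the right-hand side quadrature error, which is $\mathcal O(h^{k+2})\|f\|_{k+2}\|v_h\|_2$ by Theorem \ref{rhs-estimate}. The second term equals $\iint_\Omega(a-a_I)\nabla u\cdot\nabla v_h\,dxdy$; applying the first estimate of Lemma \ref{thm-term1} to the $u_x(v_h)_x$ part and its $x\leftrightarrow y$ analogue to the $u_y(v_h)_y$ part — both valid since $u\in H^2(\Omega)$ and $a\in W^{k+2,\infty}(\Omega)$ — bounds it by $\mathcal O(h^{k+2})\|a\|_{k+2,\infty}\|u\|_2\|v_h\|_2$. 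The third term is $\iint_\Omega a_I\nabla(u-u_p)\cdot\nabla v_h\,dxdy$, which is exactly the structure of Lemma \ref{lemma-bilinear-laplacian} summed over the $x$- and $y$-directions, but with the approximated coefficient $a_I$ in place of $a$. Collecting the three bounds and using $\|u\|_2\le\|u\|_{k+2}$ together with $\|a\|_{2,\infty}\le\|a\|_{k+2,\infty}$ (as $k\ge2$) then gives the claimed estimate.

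The one point needing care — and the step I expect to be the main obstacle — is justifying the application of Lemma \ref{lemma-bilinear-laplacian} to the third term, since $a_I$ is only piecewise $Q^k$ (continuous but not globally $W^{2,\infty}$). I would handle this by noting that the proof of Lemma \ref{lemma-bilinear-laplacian} is entirely cell-local: each cell contribution is controlled through $|a|_{0,\infty,e}$ and $|a|_{1,\infty,e}$, and on each cell $a_I$ is a smooth $Q^k$ polynomial. Standard interpolation bounds $|a-a_I|_{j,\infty,e}=\mathcal O(h^{k+1-j})|a|_{k+1,\infty,e}$ give $|a_I|_{0,\infty,e}+|a_I|_{1,\infty,e}\le C\|a\|_{k+1,\infty,e}\le C\|a\|_{k+2,\infty,\Omega}$, so the per-cell constants are uniformly bounded after summation and the third term is $\mathcal O(h^{k+2})\|a\|_{k+2,\infty}\|u\|_{k+2}\|v_h\|_2$. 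As an alternative one could split off $u_p$ rather than $u$ in the middle group and apply Lemma \ref{thm-term1} to $u_p$ directly; this merely shifts the bookkeeping to the $H^2$-stability of the M-type projection, $\|u_p\|_2\le C\|u\|_{k+2}$, which follows from the boundedness of the projection on $\hat K$ via Lemma \ref{lemma-bij}. Either route reduces the theorem to routine summation of the three already-proved estimates.
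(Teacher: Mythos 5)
Your decomposition agrees with the paper's for two of the three pieces: the quadrature error $\langle f,v_h\rangle_h-(f,v_h)$ (Theorem \ref{rhs-estimate}) and the coefficient-perturbation term $A(u,v_h)-A_I(u,v_h)$ (Lemma \ref{thm-term1}). The difference is in how the projection-remainder term is handled. The paper never applies Lemma \ref{lemma-bilinear-laplacian} with the rough coefficient $a_I$; instead it writes $A_I(u-u_p,v_h)=\bigl[A_I(u-u_p,v_h)-A(u-u_p,v_h)\bigr]+A(u-u_p,v_h)$, applies the lemma only with the smooth coefficient $a$, and bounds the correction $\iint_\Omega (a-a_I)\nabla(u-u_p)\cdot\nabla v_h$ crudely as a product of two already-small factors, $|a-a_I|_{0,\infty}=\mathcal O(h^{k+1})$ times $|\nabla(u-u_p)|_{0,2,e}=\mathcal O(h^{k})\|u\|_{k+1,e}$, giving $\mathcal O(h^{2k+1})$, which beats $h^{k+2}$ for $k\ge 2$. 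Your route—reopening the proof of Lemma \ref{lemma-bilinear-laplacian} with $a_I$ in place of $a$—is also viable because that proof is indeed cell-local (all integrations by parts use $M_{k+1}(\pm1)=0$ and $N(\pm1)=0$, so no inter-cell boundary terms arise), but your justification as written is slightly off: the $\hat R_2$ estimate in that proof integrates by parts twice in $s$ and picks up $\hat a_{ss}$, so the per-cell contributions are controlled through $|a_I|_{2,\infty,e}$ as well, not only $|a_I|_{0,\infty,e}$ and $|a_I|_{1,\infty,e}$. This is repaired by the same interpolation argument you invoke, extended to $j=2$ (legitimate since $2\le k+1$): $|a_I|_{2,\infty,e}\le |a-a_I|_{2,\infty,e}+|a|_{2,\infty,e}\le C\|a\|_{k+1,\infty,\Omega}$. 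In short, both arguments are correct; the paper's add-and-subtract trick buys the convenience of citing its lemmas as black boxes with smooth coefficients, whereas your approach requires re-verifying a superconvergence lemma for piecewise-polynomial coefficients (and tracking one more seminorm), at no gain in the final estimate.
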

\begin{proof}
 For any $v_h\in V^h$, we have 
\begin{align*}
&A_I(\tilde{u}_h,v_h)-A_I(u_p,v_h) \\
=&(f,v_h)-A_I(u_p,v_h)+\langle f,v_h\rangle_h-(f,v_h)\\
 =&A(u,v_h)-A_I(u_p,v_h)+\langle f,v_h\rangle_h-(f,v_h)\\
 =&[A(u,v_h)-A_I(u,v_h)]+[A_I(u-u_p,v_h)-A(u-u_p,v_h)]+A(u-u_p,v_h)+\langle f,v_h\rangle_h-(f,v_h).
 \end{align*}

Lemma \ref{thm-term1} implies $A(u,v_h)-A_I(u,v_h)=\mathcal O(h^{k+2})\|a\|_{k+2,\infty}\|u\|_2\|v_h\|_2$.
Theorem \ref{rhs-estimate} gives $\langle f,v_h\rangle_h-(f,v_h)=\mathcal O(h^{k+2})\|f\|_{k+2}\|v_h\|_2$. By Lemma \ref{lemma-bilinear-laplacian}, $A(u-u_p,v_h)=\mathcal O(h^{k+2})\|a\|_{2,\infty}\|u\|_{k+2}\|v_h\|_2$. 

 For the second term $A_I(u-u_p,v_h)-A(u-u_p,v_h)=\iint_\Omega (a-a_I) \nabla(u-u_p)\nabla v_h$,  by Theorem \ref{interp-theorem} and Lemma \ref{lemma-projection-remainder}, we have 
 \begin{align*}
 \left|\iint_\Omega (a-a_I) (u-u_p)_x \partial_x v_h\right|&\leq |a-a_I|_{0,\infty,\Omega} \sum_e \iint_e |(u-u_p)_x \partial_x v_h|\\
 &\leq |a-a_I|_{0,\infty,\Omega}\sum_e |(u-u_p)_x|_{0,2,e}| v_h|_{1,2,e}\\
 &= \mathcal O (h^{2k+1})\|a\|_{k+1,\infty,\Omega}\sum_e \|u\|_{k+1,e}\| v_h\|_{1,e}\\
 &=\mathcal O (h^{2k+1})\|a\|_{k+1,\infty,\Omega}\|u\|_{k+1}\| v_h\|_{1}.
 \end{align*}
\end{proof}

\begin{thm}
\label{mainthm}
 Assume $a(x,y)\in W^{k+2,\infty}(\Omega)$ is positive and $u(x,y), f(x,y)\in H^{k+2}(\Omega)$.
  Assume the mesh is fine enough so that the piecewise $Q^k$ interpolant satisfies $a_I(x,y)\geq C>0$.
 Then  $\tilde u_h$ is a ($k+2$)-th order accurate approximation to $u$ in the discrete 2-norm over all the $(k+1)\times(k+1)$ Gauss-Lobatto points:
 \[\|\tilde u_h-u\|_{2, Z_0}= \mathcal O (h^{k+2})(\|a\|_{k+2,\infty}\|u\|_{k+2}+\|f\|_{k+2}).\] 
\end{thm}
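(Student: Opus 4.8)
The plan is to reduce the claim to an $L^2(\Omega)$ estimate of the discrete error $\phi:=\tilde u_h-u_p$ and then assemble the pieces already proved. First I would split by the triangle inequality $\|\tilde u_h-u\|_{2,Z_0}\le \|\phi\|_{2,Z_0}+\|u_p-u\|_{2,Z_0}$. The second term is exactly $\mathcal O(h^{k+2})\|u\|_{k+2}$ by Theorem \ref{thm-superapproximation}. For the first term, note that $\phi$ is a piecewise $Q^k$ polynomial, and since the M-type projection preserves edge values and $u|_{\partial\Omega}=0$, we have $u_p\in V^h_0$ and hence $\phi\in V^h_0$. The stated equivalence of the discrete $2$-norm $\|\cdot\|_{2,Z_0}$ and the $L^2(\Omega)$-norm on $V^h$ then gives $\|\phi\|_{2,Z_0}\le C\|\phi\|_0$, so the whole theorem comes down to proving $\|\phi\|_0=\mathcal O(h^{k+2})(\|a\|_{k+2,\infty}\|u\|_{k+2}+\|f\|_{k+2})$.

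The $L^2$ estimate will be obtained by an Aubin--Nitsche duality argument, which first needs an $H^1$ superconvergence bound on $\phi$. Taking $v_h=\phi$ in Theorem \ref{theorem1} and using the coercivity of $A_I$ (valid since $a_I\ge C>0$, together with Poincar\'e's inequality on $V^h_0$), I get $c\|\phi\|_1^2\le A_I(\phi,\phi)=\mathcal O(h^{k+2})(\|a\|_{k+2,\infty}\|u\|_{k+2}+\|f\|_{k+2})\|\phi\|_2$. A standard inverse inequality for piecewise polynomials, $\|\phi\|_2\le Ch^{-1}\|\phi\|_1$, then yields $\|\phi\|_1=\mathcal O(h^{k+1})(\|a\|_{k+2,\infty}\|u\|_{k+2}+\|f\|_{k+2})$, one power of $h$ below the target.

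For the duality step, let $w\in H^1_0(\Omega)$ solve the symmetric dual problem $A_I(v,w)=(\phi,v)$ for all $v\in H^1_0(\Omega)$; elliptic regularity on the rectangular domain gives $\|w\|_2\le C\|\phi\|_0$, exactly as in the proof of Theorem \ref{thm-pde-error}. Let $w_p\in V^h_0$ be the piecewise $Q^k$ M-type projection of $w$, so that $\|\phi\|_0^2=A_I(\phi,w)=A_I(\phi,w-w_p)+A_I(\phi,w_p)$. The second term is controlled directly by Theorem \ref{theorem1} (with $v_h=w_p$) together with the $H^2$-stability $\|w_p\|_2\le C\|w\|_2\le C\|\phi\|_0$ of the projection, giving $\mathcal O(h^{k+2})(\cdots)\|\phi\|_0$. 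For the first term I would bound $|A_I(\phi,w-w_p)|\le C\|\phi\|_1\|w-w_p\|_1$ and combine the $H^1$ bound above with the first-order approximation $\|w-w_p\|_1\le Ch\|w\|_2\le Ch\|\phi\|_0$, which is again $\mathcal O(h^{k+2})(\cdots)\|\phi\|_0$. Dividing through by $\|\phi\|_0$ closes the $L^2$ estimate and hence the theorem.

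I expect the duality step to be the main obstacle: it depends on the preliminary $H^1$ superconvergence of $\phi$ (itself costing one power of $h$ through the inverse inequality) and on stability and approximation properties of the M-type projection applied to the dual solution $w$, which is only known to lie in $H^2(\Omega)$. Establishing $\|w_p\|_2\le C\|w\|_2$ and $\|w-w_p\|_1\le Ch\|w\|_2$ requires boundedness of the projection on the reference cell combined with the scaling \eqref{scaling} and a Bramble--Hilbert/Deny--Lions argument using only that the projection reproduces $Q^1\subset Q^k$; one must also confirm $w_p\in V^h_0$ so that Theorem \ref{theorem1} applies. The elliptic-regularity bound $\|w\|_2\le C\|\phi\|_0$ and the symmetry of $A_I$ are the remaining ingredients.
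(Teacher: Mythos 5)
Your proof is correct, but the duality step takes a genuinely different route from the paper's. Your outer reduction (triangle inequality, norm equivalence on $V^h$, and Theorem \ref{thm-superapproximation}, so that everything rests on $\|\phi\|_0$ with $\phi=\tilde u_h-u_p$) matches the paper exactly; the difference is how $\|\phi\|_0^2=A_I(\phi,w)$ is estimated. The paper never works with the continuous dual solution $w$ directly: it introduces the \emph{discrete} dual solution $w_h\in V^h_0$ solving $A_I(v_h,w_h)=(\phi,v_h)$ for all $v_h\in V^h_0$, so that $\|\phi\|_0^2=A_I(\phi,w_h)$ holds exactly and Theorem \ref{theorem1} applies in one stroke; the remaining work is the stability bound $\|w_h\|_2\leq C\|w\|_2$, proved through the $Q^1$ projection $w_I=\Pi_1 w$ of \eqref{projection1}, an inverse estimate on $w_h-w_I$, and the Galerkin orthogonality $A_I(w_h-w_I,w_h-w_I)=A_I(w_h-w_I,w-w_I)$. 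You instead split $A_I(\phi,w)=A_I(\phi,w-w_p)+A_I(\phi,w_p)$ using the M-type projection $w_p$ of $w$, which costs two ingredients the paper does not need: (i) the preliminary bound $\|\phi\|_1=\mathcal O(h^{k+1})(\|a\|_{k+2,\infty}\|u\|_{k+2}+\|f\|_{k+2})$ obtained from coercivity, Theorem \ref{theorem1} with $v_h=\phi$, and an inverse estimate, and (ii) well-posedness, $H^2$-stability, and first-order $H^1$-approximation of the M-type projection for a function with only $H^2$ regularity, plus the verification that $w_p\in V^h_0$. These are all attainable with the paper's toolbox: the 2D M-type expansion is defined for $H^2(\hat K)$ functions, the coefficients with $i,j\leq k$ are bounded by $C_k\|\hat f\|_{0,\infty,\hat K}\leq C\|\hat f\|_{2,\hat K}$ (the integration-by-parts representation behind Lemma \ref{lemma-bij}, combined with the Sobolev embedding $H^2(\hat K)\hookrightarrow C^0(\hat K)$), so the projection is bounded on $H^2(\hat K)$ and reproduces $Q^1$, and Bramble--Hilbert plus the scaling \eqref{scaling} then give $\|w-w_p\|_1\leq Ch\|w\|_2$ and $\|w_p\|_2\leq C\|w\|_2$; vanishing boundary traces give $w_p\in V^h_0$ exactly as for $u_p$. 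What each approach buys: the paper's discrete dual solution makes the key identity exact and relies only on the elementary projection $\Pi_1$ already prepared in the preliminaries, never needing any $H^1$ bound on $\phi$; your route avoids the discrete dual problem and its Galerkin-orthogonality argument entirely, at the price of the intermediate $H^1$ superconvergence estimate and of extending the M-type projection machinery to the low-regularity regime of the dual solution.
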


\begin{proof}
 
Let $\theta_h=\tilde{u}_h-u_p$.
By the definition of $u_p$ and Theorem \ref{plp-projection-theorem}, it is straightforward to show $\theta_h=0$ on $\partial \Omega$. 
By the Aubin-Nitsche duality method, 
let $w\in H_0^1(\Omega)$ be the solution to the dual problem
\begin{align*}
 A_I(v,w)=(\theta_h,v) \quad \forall v\in H^1_0(\Omega).
\end{align*}
By the same discussion as in the proof of Theorem \ref{thm-pde-error},
the solution $w$ exists and the regularity
$\|w\|_2\leq C \|\theta_h\|_0$ holds.

Let $w_h$ be the finite element projection of $w$, i.e., $w_h\in V_0^h$ satisfies
\begin{align*}
 A_I(v_h,w_h)=(\theta_h,v_h) \quad \forall v_h\in V^h_0.
\end{align*}

Since $w_h\in V^h_0$, by Theorem \ref{theorem1}, we have 
\begin{equation} \|\theta_h\|_0^2=(\theta_h,\theta_h)=A_I(\theta_h, w_h)=\mathcal O(h^4)(\|a\|_{k+2,\infty} \|u\|_{k+2}+\|f\|_{k+2})\|w_h\|_2.
\label{dual-eqn-4}
\end{equation}
Let $w_I=\Pi_1 w$ be the piecewise $Q^1$ projection of $w$ on $\Omega_h$ as defined in \eqref{projection1}.
By the Bramble-Hilbert Lemma, we get $\|w-w_I\|_{2,e}\leq C [w]_{2,e}\leq C\|w\|_{2,e}$ thus 
\[\|w-w_I\|_2\leq  C\|w\|_2. 
\]
By the inverse estimate on the piecewise polynomial $w_h-w_I$,  we have
\begin{equation}
\|w_h\|_2
\leq \|w_h-w_I\|_2+\|w_I-w\|_2+\|w\|_2\leq Ch^{-1}\|w_h-w_I\|_1+C\|w\|_2.\label{dual-eqn-1}
\end{equation}
With coercivity, Galerkin orthogonality and Cauchy Schwarz inequality, we get  
\[C\|w_h-w_I\|_1^2\leq A_I(w_h-w_I,w_h-w_I)=A_I(w_h-w_I,w-w_I)\leq C  \|w-w_I\|_1\|w_h-w_I\|_1,\]
which implies
\begin{equation}
\|w_h-w_I\|_1\leq C \|w-w_I\|_1\leq C h \|w\|_2. 
\label{dual-eqn-2}
\end{equation}

With \eqref{dual-eqn-1}, \eqref{dual-eqn-2} and the elliptic regularity
$\|w\|_2\leq C \|\theta_h\|_0$, we get  
\begin{equation}\|w_h\|_2\leq C\|w\|_2\leq  C \|\theta_h\|_0.
\label{dual-eqn-3}
\end{equation}
By \eqref{dual-eqn-4} and \eqref{dual-eqn-3} we have 
\begin{align*}
  \|\theta_h\|_0^2\leq \mathcal O (h^{k+2})(\|a\|_{k+2,\infty} \|u\|_{k+2}+\|f\|_{k+2})\|\theta_h\|_0,
\end{align*}
i.e., 
\begin{equation*}
 \|\tilde{u}_h-u_p\|_0= \|\theta_h\|_0= \mathcal O (h^{k+2})(\|a\|_{k+2,\infty} \|u\|_{k+2}+\|f\|_{k+2}).
\end{equation*}
Finally, by the equivalency between the discrete 2-norm on $Z_0$ and the
$L^2(\Omega)$ norm in the space $V^h$, with Theorem \ref{thm-superapproximation}, we obtain 
\[\|\tilde{u}_h-u\|_{2,Z_0}= \mathcal O (h^{k+2})(\|a\|_{k+2,\infty} \|u\|_{k+2}+\|f\|_{k+2}).\]
\end{proof}

\begin{rmk}
To extend Theorem \ref{mainthm} to homogeneous Neumann boundary conditions
or mixed homogeneous Dirichlet and Neumann boundary conditions, dual problems with the same homogeneous boundary conditions as in primal problems should be used. Then all the estimates such as Theorem \ref{theorem1} hold
not only for $v\in V_0^h$ but also for any $v$ in $V^h$. 
\end{rmk}
\begin{rmk}
With Theorem \ref{rhs-inte-estimate}, all the results hold for the scheme \eqref{numvarpro3}.
\end{rmk}
\begin{rmk} It is straightforward to verify that all results hold in three dimensions. 
Notice that the in three dimensions the discrete 2-norm is $$\|u\|_{2,Z_0}=\left[h^3\sum\limits_{\mathbf x\in Z_0} |u(\mathbf x)|^2\right]^{\frac12}.$$
\end{rmk}

\begin{rmk}
For  discussing superconvergence of the scheme \eqref{numvarpro4}, 
 we have to consider the dual problem of the bilinear form $A$ instead and the exact Galerkin orthogonality in \eqref{numvarpro4} no longer holds. 
In order for the proof above holds, we need to show the Galerkin orthogonality in \eqref{numvarpro4} holds up to $\mathcal O(h^{k+2})\|v_h\|_2$ for a test function $v_h\in V_h$, which is very difficult to establish. 
This is the main difficulty to extend the proof of Theorem \ref{mainthm} to the Gauss Lobatto quadrature scheme \eqref{numvarpro4}, which will be analyzed in \cite{li2019fourth} by different techniques.
\end{rmk}

\subsection{General elliptic problems}
In this section, we discuss extensions to more general elliptic problems.
Consider an elliptic variational problem  of finding $u\in H_0^1(\Omega)$ to satisfy
$$A(u,v): =\iint_\Omega (\nabla v^T \mathbf a \nabla u +\mathbf b\nabla u v + c u v) \,dx dy =(f,v), \forall v\in H^1_0(\Omega),$$
where $\mathbf a(x,y)=\begin{pmatrix}
               a_{11} & a_{12}\\
               a_{21} & a_{22}
              \end{pmatrix}
$ is positive definite and $\mathbf b=[b_1 \quad b_2]$. 
Assume the coefficients $\mathbf a$, $\mathbf b$ and $c$ are smooth, and $A(u,v)$ satisfies coercivity $A(v,v)\geq C \|v\|_1$ 
and boundedness $|A(u,v)|\leq C \|u\|_1 \|v\|_1$ for any $u,v\in H^1_0(\Omega)$.

By the estimates in Section \ref{sec-bilinearform}, we first have the following 
estimate on the $Q^k$ M-type projection $u_p$:
\begin{lemma} 
\label{lemma-elliptic-up}Assume $a_{ij}(x,y), b_i(x,y)\in W^{2,\infty}(\Omega)$ and $b_i(x,y)\in W^{2,\infty}(\Omega)$, then 
 $$A(u-u_p, v_h)=\left\{ \begin{array}{ll}
\mathcal O(h^{k+2})\|u\|_{k+2}\|v_h\|_2, \quad \forall v_h\in V^h_0,\\
\mathcal O(h^{k+1.5})\|u\|_{k+2}\|v_h\|_2, \quad \forall v_h\in V^h.
\end{array} \right.$$
If $a_{12}=a_{21}\equiv0$, then 
\[A(u-u_p, v_h)=\mathcal O(h^{k+2})\|u\|_{k+2}\|v_h\|_2, \quad\forall v_h\in V^h.\]
\end{lemma}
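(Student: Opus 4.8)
The plan is to expand $A(u-u_p,v_h)$ into its seven scalar integrals and route each one to an estimate already proved in Section \ref{sec-bilinearform}. Writing out $\nabla v_h^T\mathbf a\nabla(u-u_p)+\mathbf b\nabla(u-u_p)v_h+c(u-u_p)v_h$ gives
\begin{align*}
A(u-u_p,v_h)=&\iint_\Omega a_{11}(u-u_p)_x(v_h)_x+a_{22}(u-u_p)_y(v_h)_y\\
&+\iint_\Omega a_{12}(u-u_p)_y(v_h)_x+a_{21}(u-u_p)_x(v_h)_y\\
&+\iint_\Omega b_1(u-u_p)_xv_h+b_2(u-u_p)_yv_h+c(u-u_p)v_h.
\end{align*}

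First I would dispose of the five non-cross terms. The two diagonal second-order integrals are each of the form treated in Lemma \ref{lemma-bilinear-laplacian}: that lemma is stated for $\iint_\Omega a(u-u_p)_xv_x$, and the identical bound for $\iint_\Omega a(u-u_p)_yv_y$ follows from the $s\leftrightarrow t$ symmetry of the reference cell and of the entire M-type construction. Likewise the two first-order integrals $\iint_\Omega b_1(u-u_p)_xv_h$ and $\iint_\Omega b_2(u-u_p)_yv_h$ are covered by the lemma for $\iint_\Omega b(u-u_p)_xv$ (and its $x\leftrightarrow y$ mirror), and the zeroth-order integral $\iint_\Omega c(u-u_p)v_h$ by the lemma for $\iint_\Omega c(u-u_p)v$. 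Under the stated $W^{2,\infty}$ (resp. $W^{1,\infty}$) regularity of the coefficients, each of these yields $\mathcal O(h^{k+2})\|u\|_{k+2}\|v_h\|_2$ (using $\|u\|_{k+1}\le\|u\|_{k+2}$ for the zeroth-order term) for \emph{every} $v_h\in V^h$.

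The cross terms are the only source of any order loss, and they are handled by Lemma \ref{lemma-mixedderivative}. The integral $\iint_\Omega a_{21}(u-u_p)_x(v_h)_y$ is exactly the object of that lemma with $a=a_{21}$, while $\iint_\Omega a_{12}(u-u_p)_y(v_h)_x$ is its $x\leftrightarrow y$ reflection with $a=a_{12}$. For a general $v_h\in V^h$, estimate \eqref{crossterm-1} gives only $\mathcal O(h^{k+2-\frac12})=\mathcal O(h^{k+1.5})$, which produces the second line of the claim; for $v_h\in V^h_0$, estimate \eqref{crossterm-2} recovers the full $\mathcal O(h^{k+2})$, giving the first line. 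The remaining assertion, the case $a_{12}=a_{21}\equiv0$, then follows at once, since both cross integrals vanish identically and only the five non-cross terms survive, each $\mathcal O(h^{k+2})$ for every $v_h\in V^h$.

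I expect the only real subtlety to be the bookkeeping in invoking Lemma \ref{lemma-mixedderivative}: one must confirm that its half-order boundary loss, which in the proof of that lemma arises from the line integrals along $L_1,L_3$ of a term carrying $(v_h)_{xx}$, is precisely what is eliminated when $v_h$ vanishes on $\partial\Omega$, so that the tangential derivative $(v_h)_{xx}$ vanishes on the horizontal edges, and that the reflected cross term loses its boundary contribution on the vertical edges in the same manner. No genuinely new estimate is needed beyond those collected in Section \ref{sec-bilinearform}.
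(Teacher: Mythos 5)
Your proposal is correct and is essentially the paper's own argument: the paper states Lemma \ref{lemma-elliptic-up} without a written-out proof, citing only ``the estimates in Section \ref{sec-bilinearform},'' and your term-by-term routing --- diagonal second-order and lower-order terms to Lemma \ref{lemma-bilinear-laplacian} and its companion lemmas (with $x\leftrightarrow y$ symmetry), cross terms to Lemma \ref{lemma-mixedderivative} via \eqref{crossterm-1} for general $v_h\in V^h$ and \eqref{crossterm-2} for $v_h\in V^h_0$ --- is exactly the intended derivation. Your identification of the cross terms as the sole source of the half-order loss, which vanishes either when $v_h\in V^h_0$ (so the tangential second derivatives vanish on the relevant boundary edges) or when $a_{12}=a_{21}\equiv 0$, matches the paper's reasoning precisely.
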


Let $\mathbf a_I$, $b_I$ and $c_I$ denote the 
corresponding piecewise $Q^k$ Lagrange interpolation at Gauss-Lobatto points.
We are interested in the solution $\tilde u_h\in V^h_0$ to 
$$A_I(\tilde u_h,v_h): =\iint_\Omega (\nabla v_h^T \mathbf a_I \nabla \tilde u_h +\mathbf b_I\nabla \tilde u_h v_h + c_I \tilde u_h v_h) \,dx dy =\langle f,v_h\rangle_h, \forall  v_h\in V^h_0.$$

We need to assume that $A_I$ still satisfies coercivity $A_I(v,v)\geq C \|v\|_1$ 
and boundedness $|A_I(u,v)|\leq C \|u\|_1 \|v\|_1$ for any $u,v\in H^1_0(\Omega)$, so that
the solution $u\in H^1_0(\Omega)$ of the following problem exists and is unique:
$$A_I(u,v)=(f,v), \quad \forall v\in H^1_0(\Omega).$$
We also need the elliptic regularity to hold for the dual problem:
$$A_I(v,w)=(f,v), \quad \forall v\in H^1_0(\Omega).$$

For instance, 
if $\mathbf b\equiv 0$, it suffices to require that 
eigenvalues of
$\mathbf a_I+c_I\begin{pmatrix}
                 1 & 0\\0& 1
                \end{pmatrix}
$ has a uniform positive lower bound on $\Omega$, which is achievable on fine enough meshes if 
$\mathbf a+c\begin{pmatrix}
                 1 & 0\\0& 1
                \end{pmatrix}
$ are positive definite. This implies the coercivity of $A_I$. The boundedness of $A_I$ follows from the smoothness of coefficients. Since $\mathbf a_I$ and $c_I$ are Lipschitz continuous, the elliptic regularity for $A_I$ holds on a convex domain \cite{grisvard2011elliptic}. 

By Lemma \ref{thm-term1} and Lemma \ref{lemma-elliptic-up}, it is straightforward to extend Theorem \ref{theorem1} to the general elliptic case:
\begin{thm}
For $k\geq 2$, assume $a_{ij}, b_i, c\in W^{k+2,\infty}(\Omega)$ and $u, f\in H^{k+2}(\Omega)$, 
 then  
 $$A_I(\tilde{u}_h-u_p, v_h)=\left\{ \begin{array}{ll}
\mathcal O(h^{k+2})(\|u\|_{k+2}+\|f\|_{k+2})\|v_h\|_2,\quad \forall v_h\in V^h_0,\\
\mathcal O(h^{k+1.5})(\|u\|_{k+2}+\|f\|_{k+2})\|v_h\|_2,\quad \forall v_h\in V^h,
\end{array} \right.
.$$
And if $a_{12}=a_{21}\equiv0$, then 
\[A_I(\tilde{u}_h-u_p, v_h)=\mathcal O(h^{k+2})(\|u\|_{k+2}+\|f\|_{k+2})\|v_h\|_2,\quad \forall v_h\in V^h.\]
\end{thm}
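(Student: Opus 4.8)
The plan is to follow the proof of Theorem \ref{theorem1} line by line, upgrading its two scalar Poisson ingredients to their general-elliptic analogues: Lemma \ref{thm-term1} (all four of its estimates, covering the diffusion, advection, and reaction terms) to absorb the coefficient-interpolation error, and Lemma \ref{lemma-elliptic-up} in place of Lemma \ref{lemma-bilinear-laplacian} for the superconvergence of the projection remainder. First I would use the defining identities $A_I(\tilde u_h, v_h) = \langle f, v_h\rangle_h$ and $A(u, v_h) = (f, v_h)$ to split, for any $v_h \in V^h$,
\begin{align*}
A_I(\tilde u_h - u_p, v_h) &= \bigl[\langle f, v_h\rangle_h - (f, v_h)\bigr] + \bigl[A(u, v_h) - A_I(u, v_h)\bigr] \\
&\quad + \bigl[A_I(u - u_p, v_h) - A(u - u_p, v_h)\bigr] + A(u - u_p, v_h),
\end{align*}
which is exactly the four-group decomposition used for the variable-coefficient Poisson case.

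Then I would bound the four groups one at a time. The quadrature error $\langle f, v_h\rangle_h - (f, v_h)$ is $\mathcal O(h^{k+2})\|f\|_{k+2}\|v_h\|_2$ by Theorem \ref{rhs-estimate}. The coefficient-replacement error $A(u, v_h) - A_I(u, v_h)$ expands into a sum of terms of the types $\iint_\Omega (a_{ij} - (a_{ij})_I)\,\partial u\,\partial v_h$, $\iint_\Omega (b_i - (b_i)_I)\,\partial u\,v_h$ and $\iint_\Omega (c - c_I)\,u\,v_h$, each of which is $\mathcal O(h^{k+2})$ by the corresponding estimate in Lemma \ref{thm-term1}; hence this group is $\mathcal O(h^{k+2})\|u\|_2\|v_h\|_2$, with the implied constant controlled by the $W^{k+2,\infty}$ norms of the coefficients. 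The mixed group $A_I(u - u_p, v_h) - A(u - u_p, v_h) = \iint_\Omega (\mathbf a - \mathbf a_I)\nabla(u - u_p)\cdot\nabla v_h + \cdots$ is estimated by pairing $|\mathbf a - \mathbf a_I|_{0,\infty} = \mathcal O(h^{k+1})$ (Theorem \ref{interp-theorem}) against $|\nabla(u - u_p)|_{0,2} = \mathcal O(h^{k})\|u\|_{k+1}$ (Lemma \ref{lemma-projection-remainder}), giving $\mathcal O(h^{2k+1})$, which is strictly higher order since $k \geq 2$.

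The only genuinely delicate group is the superconvergence term $A(u - u_p, v_h)$, for which I would simply invoke Lemma \ref{lemma-elliptic-up}: it is $\mathcal O(h^{k+2})\|u\|_{k+2}\|v_h\|_2$ when $v_h \in V^h_0$, but only $\mathcal O(h^{k+1.5})\|u\|_{k+2}\|v_h\|_2$ for a general $v_h \in V^h$, improving back to $\mathcal O(h^{k+2})$ once $a_{12} = a_{21} \equiv 0$. The main obstacle is thus entirely inherited from the cross-derivative term $\iint_\Omega a_{12}(u - u_p)_x v_y$ analysed in Lemma \ref{lemma-mixedderivative}: whereas the pure-derivative contributions $\iint a (u - u_p)_x v_x$ superconverge cleanly, the mixed term produces a boundary line integral along the top and bottom edges of $\Omega$ whose interior-edge parts cancel but whose residual trace leaves an $\mathcal O(h^{k+1.5})$ term, killed only when $v_h$ vanishes on $\partial\Omega$ (so that $v_{xx} = 0$ there) or when the off-diagonal diffusion is absent. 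Collecting the four bounds and retaining the largest then yields the stated three cases.
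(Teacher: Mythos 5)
Your proposal is correct and takes essentially the same approach as the paper: the paper treats this theorem precisely as a straightforward extension of Theorem \ref{theorem1}, obtained by combining Lemma \ref{thm-term1} with Lemma \ref{lemma-elliptic-up} in exactly the four-term decomposition you write down. Your attribution of the three cases to the superconvergence term $A(u-u_p,v_h)$ alone --- with the $\mathcal O(h^{k+1.5})$ loss coming from the boundary line integral of the mixed-derivative term in Lemma \ref{lemma-mixedderivative}, removed when $v_h\in V^h_0$ or $a_{12}=a_{21}\equiv 0$ --- is also exactly the paper's reasoning.
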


With suitable assumptions, 
it is straightforward to extend the proof of Theorem \ref{mainthm} to the general case:
\begin{thm}
\label{genmainthm}
For $k\geq 2$, assume $a_{ij}, b_i, c\in W^{k+2,\infty}(\Omega)$ and $u, f\in H^{k+2}(\Omega)$,
  Assume the approximated bilinear form $A_I$ satisfies coercivity and boundedness and the elliptic  regularity still holds for the dual problem of $A_I$.
 Then  $\tilde u_h$ is a ($k+2$)-th order accurate approximation to $u$ in the discrete 2-norm over all the $(k+1)\times(k+1)$ Gauss-Lobatto points:
 \[\|\tilde u_h-u\|_{2, Z_0}= \mathcal O (h^{k+2})(\|u\|_{k+2}+\|f\|_{k+2}).\] 
\end{thm}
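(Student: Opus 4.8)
The plan is to follow the proof of Theorem \ref{mainthm} essentially verbatim, replacing the Poisson-specific Theorem \ref{theorem1} by its general elliptic counterpart (the theorem immediately preceding this statement) and replacing the symmetric coercivity/regularity facts used there by the standing hypotheses assumed here. First I would set $\theta_h=\tilde u_h-u_p\in V^h$, where $u_p$ is the piecewise $Q^k$ M-type projection of $u$. Exactly as in the Poisson case, the first two properties of the point-line-plane projection in Theorem \ref{plp-projection-theorem} guarantee that on each boundary edge $u_p$ is determined by the trace of $u$ there, so $u_p=u$ on $\partial\Omega$ and hence $\theta_h\in V^h_0$.

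Next I would apply the Aubin--Nitsche duality to the (possibly non-symmetric) form $A_I$. Let $w\in H^1_0(\Omega)$ solve the dual problem $A_I(v,w)=(\theta_h,v)$ for all $v\in H^1_0(\Omega)$; existence, uniqueness, and the regularity $\|w\|_2\leq C\|\theta_h\|_0$ on the convex domain $\Omega$ are precisely the assumed coercivity and boundedness of $A_I$ together with the assumed elliptic regularity of its dual problem. Let $w_h\in V^h_0$ be the corresponding Galerkin projection defined by $A_I(v_h,w_h)=(\theta_h,v_h)$ for all $v_h\in V^h_0$. The core identity is then $\|\theta_h\|_0^2=(\theta_h,\theta_h)=A_I(\theta_h,w_h)$, and since $w_h\in V^h_0$, the immediately preceding theorem (the general elliptic analogue of Theorem \ref{theorem1}) gives $A_I(\theta_h,w_h)=\mathcal O(h^{k+2})(\|u\|_{k+2}+\|f\|_{k+2})\|w_h\|_2$.

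It then remains to absorb $\|w_h\|_2$, which I would do exactly as in \eqref{dual-eqn-1}--\eqref{dual-eqn-3}: writing $w_I=\Pi_1 w$ for the piecewise $Q^1$ projection, the inverse estimate yields $\|w_h\|_2\leq Ch^{-1}\|w_h-w_I\|_1+C\|w\|_2$, and coercivity, boundedness, and the Galerkin orthogonality $A_I(v_h,w-w_h)=0$ give $\|w_h-w_I\|_1\leq C\|w-w_I\|_1\leq Ch\|w\|_2$. Combined with the dual regularity $\|w\|_2\leq C\|\theta_h\|_0$ this produces $\|w_h\|_2\leq C\|\theta_h\|_0$, whence $\|\theta_h\|_0=\mathcal O(h^{k+2})(\|u\|_{k+2}+\|f\|_{k+2})$. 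Finally I would combine the superapproximation bound $\|u-u_p\|_{2,Z_0}=\mathcal O(h^{k+2})\|u\|_{k+2}$ from Theorem \ref{thm-superapproximation} with the equivalence of $\|\cdot\|_{2,Z_0}$ and $\|\cdot\|_0$ on $V^h$ applied to $\theta_h$, and conclude by the triangle inequality.

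\textbf{Main obstacle.} The essential analytic content has already been pushed into the earlier bilinear-form estimates, so the real difficulty is ensuring the full $\mathcal O(h^{k+2})$ (rather than $\mathcal O(h^{k+1.5})$) bound for $A_I(\theta_h,w_h)$ is legitimate. Because of the cross-derivative term analyzed in Lemma \ref{lemma-mixedderivative}, the general theorem delivers the $h^{k+2}$ rate only for test functions in $V^h_0$, the half-order loss coming from the boundary line integrals along $L_1,L_3$ (and their vertical analogues) where $v_{xx}$ need not vanish. The duality argument is compatible with this precisely because the dual Galerkin projection $w_h$ lies in $V^h_0$; I would flag that this is the one place where the full-order estimate is needed and exactly where membership in $V^h_0$ is exploited. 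A secondary point requiring care is that $A_I$ is non-symmetric in general, so the trial/test slots must be kept consistent throughout---defining the dual problem with $w$ in the second argument and the Galerkin projection $w_h$ likewise---though the coercivity/boundedness manipulation bounding $\|w_h-w_I\|_1$ is insensitive to symmetry and goes through unchanged.
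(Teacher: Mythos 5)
Your proposal is correct and takes essentially the same approach as the paper, which proves Theorem \ref{genmainthm} simply by observing that the duality argument of Theorem \ref{mainthm} extends verbatim---exactly the argument you spell out. Your flagged ``main obstacle'' is also precisely the right one: the full $\mathcal O(h^{k+2})$ rate is legitimate only because the dual Galerkin projection $w_h$ lies in $V^h_0$, avoiding the half-order loss from the mixed-derivative boundary terms of Lemma \ref{lemma-mixedderivative}, which is the same point the paper records in Remark \ref{remark-loss}.
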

\begin{rmk}
\label{remark-loss}
With Neumann type boundary conditions, due to Lemma \ref{lemma-mixedderivative}, we can only prove $(k+1.5)$-th order accuracy 
 \[\|\tilde u_h-u\|_{2, Z_0}= \mathcal O (h^{k+1.5})(\|u\|_{k+2}+\|f\|_{k+2}),\]
 unless there are no mixed second order derivatives in the elliptic equation, i.e., $a_{12}=a_{21}\equiv0.$
 We emphasize that even for the full finite element scheme \eqref{numvarpro1}, only $(k+1.5)$-th order accuracy at all Lobatto points can be proven for  a general elliptic equation with Neumann type boundary conditions.
 \end{rmk}

\section{Numerical results}
\label{sec-test}
In this section we show some numerical tests of $C^0$-$Q^2$ finite element method on an uniform rectangular mesh and verify the order of accuracy at $Z_0$, i.e., all Gauss-Lobatto points. The following four schemes will be considered:
\begin{enumerate}
\item Full $Q^2$ finite element scheme  \eqref{numvarpro1} where integrals in the bilinear form are approximated by $5\times 5$ Gauss quadrature rule, which is exact for $Q^{9}$ polynomials thus exact for $A(u_h,v_h)$ if the variable coefficient is a $Q^5$ polynomial. 
\item The   Gauss Lobatto quadrature scheme \eqref{numvarpro4}: all integrals are approximated by $3\times 3$ Gauss Lobatto quadrature.  
 \item The schemes \eqref{numvarpro2} and  \eqref{numvarpro3}.
\end{enumerate}
The last three schemes are finite difference type since only grid point values of the coefficients are needed. 
In   \eqref{numvarpro2} and  \eqref{numvarpro3}, $A_I(u_h, v_h)$ can be exactly computed by  $4\times 4$ Gauss quadrature rule since coefficients are $Q^2$ polynomials. An alternative finite difference type implementation of  \eqref{numvarpro2} and  \eqref{numvarpro3} is to precompute integrals of Lagrange basis functions and their derivatives to form a sparse tensor, then multiply the tensor to the vector consisting of point values of the coefficient to form the stiffness matrix. 
With either implementation, computational cost to assemble stiffness matrices in schemes \eqref{numvarpro2} and  \eqref{numvarpro3} is higher than 
the stiffness matrix assembling in the simpler scheme  \eqref{numvarpro4} since the Lagrangian $Q^k$ basis  are delta functions at Gauss-Lobatto points.
\subsection{Accuracy}
We consider the following example with either purely Dirichlet or purely  Neumann  boundary conditions:  
\begin{equation*}
\nabla\cdot(a\nabla u)=f\quad \textrm{on } [0,1]\times [0,2]
\end{equation*}
where $a(x,y)=1+0.1x^3y^5+\cos(x^3y^2+1)$ and $u(x,y)=0.1(\sin(\pi x)+x^3)(\sin(\pi y)+y^3)+\cos(x^4+y^3)$. 
The nonhomogeneous boundary condition should be computed in a way consistent with the computation of integrals in the bilinear form. 
The errors at $Z_0$ are shown in Table \ref{dirichlet} and Table \ref{neumann}. We can see that the four schemes are all fourth order in the discrete 2-norm on $Z_0$. Even though we did not discuss the max norm error on $Z_0$ in this paper, we should expect a $|\ln h|$ factor in the order of $l^\infty$ error over $Z_0$ due to \eqref{super-exactcoef-max}, which was proven upon the discrete Green's function.

\begin{table}[htbp]
\caption{The errors of $C^0$-$Q^2$ for a Poisson equation with Dirichlet boundary conditions at Lobatto points.}
\label{dirichlet}
\centering
\begin{tabular}{|c|c c|c c|}
\cline{1-5}
& \multicolumn{4}{|c|} {FEM with Approximated Coefficients \eqref{numvarpro2}}\\
\cline{1-5}
\hline  Mesh &  $l^2$ error  &  order & $l^\infty$ error & order \\
\hline
$2\times 4$  & 2.22E-1 & - & 3.96E-1 & -\\
\cline{1-5}
 $4\times 8$  & 4.83E-2 & 2.20 & 1.51E-1 & 1.39 \\
\cline{1-5}
$8\times 16$   & 2.54E-3 & 4.25 & 1.16E-2 & 3.71 \\ 
 \cline{1-5}
 $16\times 32$  & 1.49E-4 & 4.09 & 7.52E-4 & 3.95\\
 \cline{1-5}
$32\times 64$  & 9.22E-6 & 4.01 & 5.14E-5 & 3.87\\
\hline
& \multicolumn{4}{|c|} {FEM using Gauss Lobatto Quadrature  \eqref{numvarpro4}}  \\
\cline{1-5}
\hline  Mesh &  $l^2$ error  &  order & $l^\infty$ error & order \\
\hline
$2\times 4$  & 2.24E-1 & - & 4.30E-1 & -\\
\cline{1-5}
 $4\times 8$   & 4.43E-2 & 2.34 & 1.37E-1 & 1.65 \\
\cline{1-5}
$8\times 16$   & 2.27E-3 & 4.29 & 8.61E-3 & 4.00 \\ 
 \cline{1-5}
 $16\times 32$  & 1.32E-4 & 4.11 & 4.87E-4 & 4.14\\
 \cline{1-5}
 $32\times 64$  & 8.13E-6 & 4.02 & 3.09E-5 & 3.97\\
\hline
\cline{1-5}
& \multicolumn{4}{|c|} {FEM with Approximated Coefficients \eqref{numvarpro3}}\\
\cline{1-5}
\hline  Mesh &  $l^2$ error  &  order & $l^\infty$ error & order \\
\hline
$2\times 4$  & 2.78E-1 & - & 6.31E-1 & -\\
\cline{1-5}
 $4\times 8$  & 2.76E-2 & 3.33 & 8.69E-2 & 2.86 \\
\cline{1-5}
$8\times 16$   & 1.28E-3 & 4.43 & 3.77E-3 & 4.53 \\ 
 \cline{1-5}
 $16\times 32$  & 8.96E-5 & 3.83 & 3.36E-4 & 3.49\\
 \cline{1-5}
 $32\times 64$  & 5.79E-6 & 3.95 & 2.41E-5 & 3.80\\
\hline
& \multicolumn{4}{|c|} {Full FEM Scheme}  \\
\cline{1-5}
\hline  Mesh &  $l^2$ error  &  order & $l^\infty$ error & order \\
\hline
$2\times 4$  & 1.48E-2 & - & 3.79E-2 & -\\
\cline{1-5}
 $4\times 8$   & 1.05E-2 & 0.50 & 3.76E-2 & 0.01 \\
\cline{1-5}
$8\times 16$   & 7.32E-4 & 3.84 & 4.04E-3 & 3.22 \\ 
 \cline{1-5}
 $16\times 32$ & 4.54E-5 & 4.01 & 2.83E-4 & 3.83\\
 \cline{1-5}
$32\times 64$  & 2.85E-6 & 3.99 & 1.75E-5 & 4.02\\
\hline
\end{tabular}
\end{table}

\begin{table}[htbp]
\label{neumann}
\centering
\caption{The errors of $C^0$-$Q^2$ for a Poisson equation with Neumann boundary conditions at Lobatto points.}
\begin{tabular}{|c|c c|c c|}
\cline{1-5}
& \multicolumn{4}{|c|} {FEM with Approximated Coefficients \eqref{numvarpro2}}\\
\cline{1-5}
\hline  Mesh &  $l^2$ error  &  order & $l^\infty$ error & order \\
\hline
$2\times 4$  & 3.44E0 & - & 5.39E0 & -\\
\cline{1-5}
$4\times 8$   & 1.83E-1 & 4.23 & 3.51E-1 & 3.93\\
\cline{1-5}
$8\times 16$   & 1.38E-2 & 3.73 & 3.43E-2 & 3.36 \\ 
\cline{1-5}
$16\times 32$  & 8.37E-4 & 4.04 & 2.21E-3 & 3.96\\
\cline{1-5}
$32\times 64$  & 5.13E-5 & 4.03 & 1.41E-4 & 3.96\\
\hline
& \multicolumn{4}{|c|} {FEM using Gauss Lobatto Quadrature  \eqref{numvarpro4}}  \\
\cline{1-5}
\hline  Mesh &  $l^2$ error  &  order & $l^\infty$ error & order \\
\hline
$2\times 4$  & 3.43E0 & - & 4.95E0 & -\\
\cline{1-5}
$4\times 8$   & 1.81E-1 & 4.25 & 3.11E-1 & 3.99 \\
\cline{1-5}
$8\times 16$   & 1.37E-2 & 3.72 & 2.81E-2 & 3.47 \\ 
\cline{1-5}
$16\times 32$  & 8.33E-4 & 4.04 & 1.76E-3 & 4.00\\
\cline{1-5}
$32\times 64$  & 5.11E-5 & 4.03 & 1.12E-4 & 3.97\\
\hline
\cline{1-5}
& \multicolumn{4}{|c|} {FEM with Approximated Coefficients \eqref{numvarpro3}}\\
\cline{1-5}
\hline  Mesh &  $l^2$ error  &  order & $l^\infty$ error & order \\
\hline
$2\times 4$  & 3.64E0 & - & 5.06E0 & -\\
\cline{1-5}
$4\times 8$   & 1.60E-1 & 4.51 & 2.54E-1 & 4.32 \\
\cline{1-5}
$8\times 16$   & 1.26E-2 & 3.67 & 2.39E-2 & 3.41 \\ 
\cline{1-5}
$16\times 32$  & 7.67E-4 & 4.03 & 1.67E-3 & 3.84\\
\cline{1-5}
$32\times 64$  & 4.71E-5 & 4.03 & 1.09E-4 & 3.94\\
\hline
& \multicolumn{4}{|c|} {Full FEM Scheme}  \\
\cline{1-5}
\hline  Mesh &  $l^2$ error  &  order & $l^\infty$ error & order \\
\hline
$2\times 4$  & 8.45E-2 & - & 2.13E-1 & -\\
\cline{1-5}
$4\times 8$   & 1.56E-2 & 2.43 & 5.66E-2 & 1.91 \\
\cline{1-5}
$8\times 16$   & 9.12E-4 & 4.10 & 5.14E-3 & 3.46 \\ 
\cline{1-5}
$16\times 32$  & 5.47E-5 & 4.06 & 3.24E-4 & 3.99\\
\cline{1-5}
$32\times 64$  & 3.37E-6 & 4.02 & 2.22E-5 & 3.87\\
\hline
\end{tabular}
\end{table}

\begin{table}[htbp]
\label{neumann_elliptic}
\centering
\caption{An elliptic equation with mixed second order derivatives and Neumann boundary conditions.}
\begin{tabular}{|c|c c|c c|}
\cline{1-5}
& \multicolumn{4}{|c|} {FEM with Approximated Coefficients \eqref{numvarpro2}}\\
\cline{1-5}
\hline  Mesh &  $l^2$ error  &  order & $l^\infty$ error & order \\
\hline
 $2\times 4$   & 1.92E0 & - & 3.47E0 & -\\
\cline{1-5}
 $4\times 8$   & 2.16E-1 & 3.15 & 6.05E-1 & 2.52 \\
\cline{1-5}
 $8\times 16$  & 1.45E-2 & 3.90 & 6.12E-2 & 3.30 \\ 
 \cline{1-5}
 $16\times 32$  & 9.08E-4 & 4.00 & 4.05E-3 & 3.92\\
 \cline{1-5}
 $32\times 64$   & 5.66E-5 & 4.00 & 2.76E-4 & 3.88\\
\hline
& \multicolumn{4}{|c|} {FEM using Gauss Lobatto Quadrature  \eqref{numvarpro4}}  \\
\cline{1-5}
\hline  Mesh &  $l^2$ error  &  order & $l^\infty$ error & order \\
\hline
 $2\times 4$ & 1.38E0 & - & 2.27E0 & -\\
\hline
$4\times 8$ & 1.46E-1 & 3.24 & 2.52E-1 & 3.17\\
\hline
$8\times 16$  & 7.49E-3 & 4.28 & 1.64E-2 & 3.94 \\
\hline
$16\times 32$ & 4.31E-4 & 4.12 & 1.02E-3 & 4.01 \\ 
 \hline
 $32\times 64$  & 2.61E-5 & 4.04 & 7.47E-5 & 3.78\\
\hline
\cline{1-5}
& \multicolumn{4}{|c|} {FEM with Approximated Coefficients \eqref{numvarpro3}}\\
\cline{1-5}
\hline  Mesh &  $l^2$ error  &  order & $l^\infty$ error & order \\
\hline
 $2\times 4$  & 1.89E0 & - & 2.84E0 & -\\
\cline{1-5}
 $4\times 8$  & 1.04E-1 & 4.18 & 1.45E-1 & 4.30 \\
\cline{1-5}
 $8\times 16$  & 5.62E-3 & 4.21 & 1.86E-2 & 2.96 \\ 
 \cline{1-5}
 $16\times 32$  & 3.24E-4 & 4.12 & 1.67E-3 & 3.48\\
 \cline{1-5}
 $32\times 64$  & 1.95E-5 & 4.05 & 1.32E-4 & 3.66\\
\hline
& \multicolumn{4}{|c|} {Full FEM Scheme}  \\
\cline{1-5}
\hline  Mesh &  $l^2$ error  &  order & $l^\infty$ error & order \\
\hline
  $2\times 4$   & 1.46E-1 & - & 4.31E-1 & -\\
\cline{1-5}
 $4\times 8$   & 1.64E-2 & 3.16 & 6.55E-2 & 2.71 \\
\cline{1-5}
 $8\times 16$  & 7.08E-4 & 4.53 & 3.42E-3 & 4.26 \\ 
 \cline{1-5}
 $16\times 32$  & 4.44E-5 & 4.06 & 4.84E-4 & 2.82\\
 \cline{1-5}
 $32\times 64$   & 2.95E-6 & 3.85 & 7.96E-5 & 2.60\\
\hline
\end{tabular}
\end{table}

\begin{table}[htbp]
\label{dirichlet_elliptic}
\centering
\caption{An elliptic equation with mixed second order derivatives and Dirichlet boundary conditions.}
\begin{tabular}{|c|c c|c c|}
\cline{1-5}
& \multicolumn{4}{|c|} {FEM with Approximated Coefficients \eqref{numvarpro2}}\\
\cline{1-5}
\hline  Mesh &  $l^2$ error  &  order & $l^\infty$ error & order \\
\hline
 $2\times 4$   & 2.64E-2 & - & 7.01E-2 & -\\
\cline{1-5}
 $4\times 8$   & 4.68E-3 & 2.50 & 1.92E-2 & 1.87 \\
\cline{1-5}
 $8\times 16$  & 4.78E-4 & 3.29 & 2.70E-3 & 2.83 \\ 
 \cline{1-5}
 $16\times 32$  & 3.69E-5 & 3.69 & 2.43E-4 & 3.47\\
 \cline{1-5}
 $32\times 64$   & 2.53E-6 & 3.87 & 1.82E-5 & 3.74\\
 \cline{1-5}
 $64\times 128$   & 1.65E-7 & 3.94 & 1.25E-6 & 3.87\\
\hline
& \multicolumn{4}{|c|} {FEM using Gauss Lobatto Quadrature  \eqref{numvarpro4}}  \\
\cline{1-5}
\hline  Mesh &  $l^2$ error  &  order & $l^\infty$ error & order \\
\hline
$2\times 4$ & 3.94E-2 & - & 7.15E-2 & -\\
\hline
 $4\times 8$ & 1.23E-2 & 1.67 & 3.28E-2 & 1.12\\
\hline
 $8\times 16$ & 1.46E-3 & 3.08 & 5.42E-3 & 2.60 \\
\hline
$16\times 32$ & 1.14E-4 & 3.68 & 3.96E-4 & 3.78 \\ 
 \hline
 $32\times 64$ & 7.75E-6 & 3.88 & 2.62E-5 & 3.92\\
\hline
\cline{1-5}
& \multicolumn{4}{|c|} {FEM with Approximated Coefficients \eqref{numvarpro3}}\\
\cline{1-5}
\hline  Mesh &  $l^2$ error  &  order & $l^\infty$ error & order \\
\hline
 $2\times 4$   & 4.08E-2 & - & 7.67E-2 & -\\
\cline{1-5}
 $4\times 8$   & 1.01E-2 & 2.02 & 3.39E-2 & 1.18 \\
\cline{1-5}
 $8\times 16$  & 5.22E-4 & 4.27 & 1.72E-3 & 4.30 \\ 
 \cline{1-5}
 $16\times 32$  & 3.14E-5 & 4.05 & 9.57E-5 & 4.17\\
 \cline{1-5}
 $32\times 64$   & 1.99E-6 & 3.98 & 5.71E-6 & 4.07\\
\hline
& \multicolumn{4}{|c|} {Full FEM Scheme}  \\
\cline{1-5}
\hline  Mesh &  $l^2$ error  &  order & $l^\infty$ error & order \\
\hline
  $2\times 4$   & 7.35E-2 & - & 1.99E-1 & -\\
\cline{1-5}
 $4\times 8$   & 5.94E-3 & 3.63 & 2.43E-2 & 3.03 \\
\cline{1-5}
 $8\times 16$  & 4.31E-4 & 3.79 & 2.01E-3 & 3.60\\ 
 \cline{1-5}
 $16\times 32$  & 2.83E-5 & 3.93 & 1.76E-4 & 3.93\\
 \cline{1-5}
 $32\times 64$   & 1.68E-6 & 4.07 & 8.41E-6 & 4.07\\
\hline
\end{tabular}
\end{table}

\begin{table}[htbp]
\label{robust}
\centering
\caption{A Poisson equation with coefficient  $\min\limits_{(x,y)} a(x,y)\approx 0.001$.}
\begin{tabular}{|c|c c|c c|}
\cline{1-5}
& \multicolumn{4}{|c|} {FEM with Approximated Coefficients \eqref{numvarpro2}}\\
\cline{1-5}
\hline  Mesh &  $l^2$ error  &  order & $l^\infty$ error & order \\
\hline
 $2\times 4$  & 2.78E-1 & - & 4.52E-1 & -\\
\cline{1-5}
 $4\times 8$  & 6.22E-2 & 2.16 & 2.08E-1 & 1.12 \\
\cline{1-5}
 $8\times 16$ & 1.09E-2 & 2.51 & 8.44E-2 & 1.30 \\ 
 \cline{1-5}
 $16\times 32$  & 1.31E-3 & 3.05 & 1.81E-2 & 2.22\\
 \cline{1-5}
$32\times 64$  & 1.08E-4 & 3.60 & 1.75E-3 & 3.38\\
\hline
$64\times 128$  & 7.24E-6 & 3.90 & 1.52E-4 & 3.53\\
\hline
& \multicolumn{4}{|c|} {FEM using Gauss Lobatto Quadrature  \eqref{numvarpro4}}  \\
\cline{1-5}
\hline  Mesh &  $l^2$ error  &  order & $l^\infty$ error & order \\
\hline
 $2\times 4$  & 2.81E-1 & - & 4.59E-1 & -\\
\cline{1-5}
 $4\times 8$   & 4.69E-2 & 2.58 & 1.37E-1 & 1.74\\
\cline{1-5}
 $8\times 16$   & 5.06E-3 & 3.21 & 3.75E-2 & 1.87\\ 
 \cline{1-5}
 $16\times 32$   & 7.04E-4 & 2.85 & 7.86E-3 & 2.25\\
 \cline{1-5}
$32\times 64$   & 6.74E-5 & 3.39 & 1.21E-3 & 2.70\\
\hline
$64\times 128$   & 4.94E-6 & 3.77 & 1.17E-4 & 3.37\\
\hline
& \multicolumn{4}{|c|} {FEM with Approximated Coefficients \eqref{numvarpro3}}\\
\cline{1-5}
\hline  Mesh &  $l^2$ error  &  order & $l^\infty$ error & order \\
\hline
 $2\times 4$  & 2.68E-1 & - & 5.48E-1 & -\\
\cline{1-5}
 $4\times 8$ & 2.91E-1 & 3.21 & 1.59E-1 & 1.78 \\
\cline{1-5}
 $8\times 16$  & 3.51E-3 & 3.05 & 4.02E-2 & 1.98 \\ 
\cline{1-5}
 $16\times 32$ & 2.86E-4 & 3.62 & 3.60E-3 & 3.48 \\
\cline{1-5}
$32\times 64$  & 1.86E-5 & 3.94 & 2.31E-4 & 3.96\\
\cline{1-5}
$64\times 128$  & 1.17E-6 & 4.00 & 1.53E-5 & 3.91\\
\hline
\end{tabular}
\end{table}

Next we consider an elliptic equation with purely Dirichlet or purely Neumann boundary conditions:
\begin{equation*}
\nabla\cdot(\mathbf a\nabla u)+c u=f\quad \textrm{on } [0,1]\times [0,2]
\end{equation*}
where $\mathbf a=\left( {\begin{array}{cc}
   a_{11} & a_{12} \\
   a_{21} & a_{22} \\
  \end{array} } \right)$, $a_{11}=10+30y^5+x\cos{y}+y$, $a_{12}=a_{21}=2+0.5(\sin(\pi x)+x^3)(\sin(\pi y)+y^3)+\cos(x^4+y^3)$, $a_{22}=10+x^5$, $c=1+x^4y^3$ and $u(x,y)=0.1(\sin(\pi x)+x^3)(\sin(\pi y)+y^3)+\cos(x^4+y^3)$.
The errors at $Z_0$ are listed in Table \ref{neumann_elliptic} and  Table \ref{dirichlet_elliptic}. Recall that only $\mathcal O (h^{3.5})$ can be proven due to the mixed second order derivatives for the Neumann boundary conditions as discussed in Remark \ref{remark-loss},
we observe around fourth order accuracy for \eqref{numvarpro2} and  \eqref{numvarpro3} for Neumann boundary conditions in this particular example. 

\subsection{Robustness}

In Table \ref{dirichlet} and Table \ref{neumann}, the errors of approximated coefficient schemes  \eqref{numvarpro2},  \eqref{numvarpro3} and the Gauss Lobatto quadrature scheme  \eqref{numvarpro4} are close to one another. 
We observe that the scheme \eqref{numvarpro3} tends to be more accurate  than  \eqref{numvarpro2} and  \eqref{numvarpro4} when the coefficient
$a(x,y)$ is closer to zero in the Poisson equation. 
See Table \ref{robust} for errors of solving $\nabla\cdot(a\nabla u)=f\quad \textrm{on } [0,1]\times [0,2]$ with Dirichlet boundary conditions, 
$a(x,y)=1+\varepsilon x^3y^5+\cos(x^3y^2+1)$ and $u(x,y)=0.1(\sin(\pi x)+x^3)(\sin(\pi y)+y^3)+\cos(x^4+y^3)$ where $\varepsilon=0.001$.
Here the smallest value of $a(x,y)$ is around $\varepsilon=0.001$. We remark that the difference among three schemes is much smaller for larger $\varepsilon$ such as $\varepsilon=0.1$ as in Table \ref{dirichlet}.

\section{Concluding remarks}
\label{sec-remark}

We have shown that the classical superconvergence of functions values at Gauss Lobatto points in $C^0$-$Q^k$ finite element method for an elliptic problem still holds if replacing the coefficients by their piecewise $Q^k$ Lagrange interpolants at the Gauss Lobatto points. 
Such a superconvergence result can be used for constructing a fourth order accurate finite difference type scheme by using $Q^2$ approximated variable coefficients. 
Numerical tests suggest that this is an efficient and robust implementation of $C^0$-$Q^2$ finite element method  without affecting the superconvergence of function values.

\section*{Acknowledgments} Research is supported by the NSF grant DMS-1522593.
The authors are grateful to Prof. Johnny Guzm\'{a}n for discussions on Theorem \ref{thm-pde-error}.

\clearpage

\bibliographystyle{siamplain}
\bibliography{references.bib}

\end{document}